\newcommand{\tung}[1]{{\color{red}#1}}
\newcommand{\Tan}[1]{{\color{brown}#1}}
\newtheorem{thm}{Theorem}[section] 
\newtheorem*{thm*}{Theorem} 
\newtheorem{prop}[thm]{Proposition}
\newtheorem{lem}[thm]{Lemma}
\newtheorem{cor}[thm]{Corollary}
\theoremstyle{definition}
\newtheorem{definition}[thm]{Definition}
\newtheorem{expl}[thm]{Example}
\newtheorem{conj}[thm]{Conjecture}
\newtheorem{algo}[thm]{Algorithm}
\newtheorem{question}[thm]{Question}
\newtheorem{rem}[thm]{Remark}
\newtheorem{speculation}[thm]{Speculation}
\DeclareMathOperator{\R}{\mathbb{R}}
\DeclareMathOperator{\C}{\mathbb{C}}
\DeclareMathOperator{\Z}{\mathbb{Z}}
\DeclareMathOperator{\N}{\mathbb{N}}
\DeclareMathOperator{\F}{\mathbb{F}}
\DeclareMathOperator{\Fbar}{\overline{\mathbb{F}}}
\DeclareMathOperator{\Q}{\mathbb{Q}}
\DeclareMathOperator{\Gal}{\text{Gal}}
\DeclareMathOperator{\Res}{{\rm Res}}
\DeclareMathOperator{\disc}{{\rm disc}}
\DeclareMathOperator{\sgn}{\rm sgn}
\DeclareMathOperator{\Frob}{\rm Frob}
    \DeclareFontFamily{U}{wncy}{}
    \DeclareFontShape{U}{wncy}{m}{n}{<->wncyr10}{}
    \DeclareSymbolFont{mcy}{U}{wncy}{m}{n}
    \DeclareMathSymbol{\Sha}{\mathord}{mcy}{"58}
\numberwithin{equation}{section}
\newcommand{\fbar}{\overline{f}}
\newcommand{\gbar}{\overline{g}}
\newcommand{\hb}{\overline{h}}
\newcommand{\rev}{{\text{rev}}}
\DeclareSymbolFont{bbold}{U}{bbold}{m}{n}
\DeclareSymbolFontAlphabet{\mathbbold}{bbold}
\newcommand{\mult}{{\rm mult}}
\newcommand{\Char}{\mathrm{char}}
\newcommand{\legendre}[2]{\ensuremath{\left( \frac{#1}{#2} \right) }}
\newcommand{\abs}[1]{\left\lvert#1\right\rvert}
\newcommand{\SC}[1]{\noindent {\color{blue!80!black}[SC] #1 }}
\newcommand{\TN}[1]{\noindent {\color{red!80!black}[Tung] #1 }}
\begin{document}
\title{Fekete polynomials of principal Dirichlet characters}
 \author{Shiva Chidambaram, J\'an Min\'a\v{c}, Tung T. Nguyen, Nguy$\tilde{\text{\^{E}}}$n Duy T\^{a}n }
\address{Department of Mathematics, Massachusetts Institute of Technology, Massachusetts Avenue
Cambridge, MA 02139-4307}
\email{shivac@mit.edu}
\address{Department of Mathematics, Western University, London, Ontario, Canada N6A 5B7}
\email{minac@uwo.ca}
\date{\today}

 \address{Department of Mathematics, Western University, London, Ontario, Canada N6A 5B7}
 \email{tungnt@uchicago.edu}
 
  \address{
 School of Applied  Mathematics and 	Informatics, Hanoi University of Science and Technology, 1 Dai Co Viet Road, Hanoi, Vietnam } 
\email{tan.nguyenduy@hust.edu.vn}
 
\thanks{SC was supported by Simons Foundation grant 550033. JM is partially supported  by the Natural Sciences and Engineering Research Council of Canada (NSERC) grant R0370A01. He gratefully acknowledges the Western University Faculty of Science Distinguished Professorship 2020-2021. JM and TN  acknowledge the support of the Western Academy for Advanced Research. NDT is funded by Vingroup Joint Stock Company and supported by Vingroup Innovation Foundation (VinIF) under the project code VINIF.2021.DA00030}
\keywords{Fekete polynomials, cyclotomic polynomials, separability, irreducibility, Galois groups}
\subjclass[2020]{Primary 11C08, 11R09, 11M06, 11Y70}

\maketitle
\begin{abstract}

Fekete polynomials associated to quadratic Dirichlet characters have interesting arithmetic properties, and have been studied in many works. In this paper, we study a seemingly simpler yet rich variant: the Fekete polynomial $F_n(x) = \sum_{a=1}^n \chi_n(a) x^a$ associated to a principal Dirichlet character $\chi_n$ of modulus $n$. We investigate the cyclotomic factors of $F_n$ and conjecturally describe all of them. One interesting observation from our computations is that the non-cyclotomic part $f_n$ of $F_n(x)/x$ seems to be always irreducible. We study this factor closely in the special case that $n$ is a product of two odd primes, proving separability in specific cases, and studying its coefficients and special values. Combining these theoretical results with computational evidence lets us identify the Galois group of $f_n$ for small $n$, and raises precise questions in general.

\end{abstract}
\tableofcontents

\section{Introduction}
Let $\chi: (\Z/n)^{\times} \to \C^{\times}$ be a Dirichlet character with modulus $n>1.$ Associated to $\chi$ is the Dirichlet $L$-function defined by 
\[ L(\chi,s)= \sum_{a=1}^{\infty} \frac{\chi(a)}{a^s}.\] 
This infinite series is absolutely convergent when $\Re(s)>1$. The function $L(\chi, s)$ has a meromorphic continuation to the entire complex plane, with a simple pole at $s=1$ in the case $\chi$ is a principal character.  Furthermore, $L(\chi, s)$ has the following integral representation (see \cite[Proposition 3.3]{MTT4}. The character $\chi$ is assumed to be primitive in \cite{MTT4}, but it is not necessary and the proof goes through without this assumption)
\begin{equation} \label{eq:integral}
\Gamma(s) L(\chi, s)=  \int_{0}^1 \frac{(-\log t)^{s-1} }{t} \frac{F_{\chi}(t)}{1-t^n} dt,
\end{equation}
where $\Gamma(s)$ is the Gamma function and 
\[ F_{\chi}(x) =\sum_{a=0}^{n-1} \chi(a)x^a .\] 
While studying quadratic characters $\chi = \left(\frac{\cdot}{p} \right)$ of prime conductor $p$, Fekete  made the observation that if $F_{\chi}(x)$ has no real zeroes in the interval $0<x<1$, then $L(s,\chi)$ has no real zeroes on $(0,\infty)$. In this sense, the study of $F_{\chi}(x)$ could shed light on the existence of  Siegel zeroes near $s = 1$. For this historical reason, the $F_{\chi}(x)$ are called Fekete polynomials.

Fekete polynomials have a rich mathematical history. They appear in Gauss's sixth proof of the quadratic reciprocity law (see \cite[Chapter 10, Section 3]{lemmermeyer}). Various aspects of Fekete polynomials have been studied over the years, including their extremal properties (see \cite{borwein2002computational, borwein2001extremal}), Mahler measure (see \cite{erdelyi2018improved, klurman2023l_q}), connections to oscillations of quadratic $L$-functions (see \cite{baker1990oscillations}), and distribution of their complex roots (\cite{conrey2000zeros}).

In recent works (\cite{MTT4, MTT3}), the last named three authors have analyzed the arithmetic properties of Fekete polynomials when $\chi$ is a primitive quadratic Dirichlet character. These works have shown that Fekete polynomials contain valuable arithmetic information, such as class number and orders of $K$-groups of certain quadratic fields. Furthermore, extensive computational evidence suggests that $F_{\chi}(x)/x$ has exactly one irreducible non-cyclotomic factor $f_{\chi}$, and that the Galois group of $f_{\chi}$ is as large as possible, as stated in \cite[Conjecture 4.9, Conjecture 4.13]{MTT3} and \cite[Conjecture 11.16]{MTT4}.

In this article, we consider the story for principal Dirichlet characters. Already in this seemingly simple situation, we notice some interesting phenomena akin to the primitive quadratic case discussed above. Concretely, we consider $\chi_n : \Z \to \C^{\times}$ defined by
\[ \chi_n(a)= \begin{cases}0&{\text{if }}\;\gcd(a,n)>1\\ 1 &{\text{if }}\;\gcd(a,n)=1.\end{cases} \]
and let $F_{n}(x)$ denote the associated Fekete polynomial $F_{\chi_n}(x)$. That is,\\
\begin{align}
\label{def:Fekete}
    F_n(x)= \sum_{\substack{0 \leq a \leq n-1\\ \gcd(a,n)=1}} x^a.
\end{align}
Since $\gcd(n-1,n)=1$, the degree of $F_n$ is $n-1.$ In this article, we begin the study of $F_n$, focusing in particular on the determination and arithmetic of the factors of $F_n$, both cyclotomic and non-cyclotomic.

We observe that the non-cyclotomic part of $F_n(x)/x$, which we denote by $f_n$, seems to be always irreducible. Similar to the case of quadratic characters investigated in \cite{MTT4, MTT3}, the Galois group of $f_n$ is as large as possible subject to a condition on the discriminant. We also notice that the coefficients of $f_n$ are relatively small. When $n=3p$ for a prime $p > 3$, we prove in \cref{thm:coeff_3p} that the coefficients of $f_{3p}$ belong to the set $\{-2, -1, 0, 1, 2 \}$. This suggests that $f_n$ may have noteworthy extremal properties that we intend to explore in the future. It is important to note that we approach this project from a computational standpoint, meaning that we discovered many results in our article by computing and analyzing a large dataset. The codes and data are available in the GitHub repository \cite{fekete}.

We remark that the theory of Fekete polynomials is closely related to the construction of certain Paley graphs. When $\chi$ is a primitive quadratic Dirichlet character, this connection is discussed in \cite{paleygraph}. When $\chi$ is a principal Dirichlet character, the corresponding Paley graph is called a unitary Caley graph (see \cite{bavsic2015polynomials, klotz2007some}). These types of Paley graphs have found applications in various fields such as coding and cryptography theory (see \cite{ghinelli2011codes, javelle2014cryptographie}). We hope that this work sheds more light on further applications of Fekete polynomials and Paley graphs.

The article is structured as follows.
In \cref{sec:cyclotomic_factors}, we present our main results concerning the factors of $F_n$. Firstly in \cref{subsec:reduction_to_squarefree}, we show that restricting to squarefree $n$ does not miss any interesting phenomena. We then proceed to describe in \cref{thm:cyclotomic_factor_3} a large source of cyclotomic factors of $F_n$ and prove in \cref{thm:multiplicity_of_Phid} that all cyclotomic factors are simple factors, with the exception of the $4$-th cyclotomic polynomial $\Phi_4(x) = x^2+1$ which occurs with multiplicity $2$ when $n$ is even.
We also make a conjecture relating to the full description of cyclotomic factors of $F_n$ (see \cref{conj:equidistribution}).
\cref{sec:pq} is devoted to the case where $n=pq$ for distinct odd primes $p, q$. We conjecturally determine all cyclotomic factors of $F_n$ in this special case (see \cref{thm:cyclotomic_factors_pq1}).
Based on this, we define the Fekete polynomial $f_n$ and its trace polynomial $g_n$.
We then study some arithmetic properties of $F_n$ in characteristic $p$ with the goal of showing that $f_n$ is separable. In \cref{sec:3p}, we focus on the case $n=3p$. We show in \cref{thm:coeff_3p} that the coefficients of $f_{3p}$ lie in the set $\{-2,-1,0,1,2\}$.
We also prove that $f_{3p}$ is separable, by first proving it in characteristic $p$.
In \cref{sec:5p} we consider the case $n = 5p$, once again establishing separability of $f_n$, although the proof is more involved.
The last two sections discuss computational techniques and results.
\cref{sec:irreducibility} discusses algorithms for checking irreducibility of $f_n$ and $g_n$, and \cref{sec:Galois} investigates their Galois groups. Finally \cref{sec:zeros_on_unitcircle} discusses a tangential question of an analytic flavor, about the proportion of complex zeroes of $F_n$ on the unit circle, using the approach of \cite{conrey2000zeros}.

\section{Cyclotomic factors of $F_n$ and their multiplicity}
\label{sec:cyclotomic_factors}
\subsection{Reduction to the squarefree case}
\label{subsec:reduction_to_squarefree}
Let $n$ be a positive integer and $n_0$ the radical of $n$ which is defined as the product of the distinct prime divisors of $n$. Let $\chi_n$ and $\chi_{n_0}$ be the principal Dirichlet characters associated with $n$ and $n_0$ as explained in the introduction. For an integer $a$, we know that $\gcd(a,n)=1$ if and only if $\gcd(a,n_0)=1.$ Therefore,  by definition, we see that $L(\chi_n,s)=L(\chi_{n_0}, s).$ By the integral representations of these $L$-functions \cref{eq:integral} we conclude that for all $s>1$

\[ \int_{0}^1 \frac{(-\log(t))^{s-1} }{t} \frac{F_{n}(t)}{1-t^{n}} dt = \int_{0}^1 \frac{(-\log(t))^{s-1} }{t} \frac{F_{n_0}(t)}{1-t^{n_0}} dt .\] 
This suggests the following proposition. 
\begin{prop} \label{prop:squarefree_condition}
Let $n$ be an integer and $n_0$ the radical of $n$. Then we have the following equality 
\[  \frac{F_{n_0}(x)}{1-x^{n_0}} = \frac{F_{n}(x)}{1-x^n}. \]
\end{prop}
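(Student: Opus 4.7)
The plan is to pass from the rational-function identity to a formal power series identity on both sides, where the equality becomes transparent. The key observation is that $\chi_n$ and $\chi_{n_0}$, viewed as functions $\Z \to \{0,1\}$, are literally equal: for any integer $a$, $\gcd(a,n) = 1$ if and only if $\gcd(a,n_0) = 1$, since $n$ and $n_0$ have the same set of prime divisors. So my strategy is to show that each side of the desired identity, when expanded as a formal power series in $x$, equals $\sum_{a \geq 0} \chi_n(a)\, x^a = \sum_{a \geq 0} \chi_{n_0}(a)\, x^a$.

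First I would work in $\Z[[x]]$ and use the geometric series expansion $\frac{1}{1-x^n} = \sum_{k \geq 0} x^{kn}$. Multiplying by $F_n(x) = \sum_{a=0}^{n-1}\chi_n(a) x^a$ gives
\[
\frac{F_n(x)}{1-x^n} \;=\; \sum_{k \geq 0}\sum_{a=0}^{n-1} \chi_n(a)\, x^{kn + a}.
\]
By the division algorithm, every nonnegative integer $m$ can be written uniquely as $m = kn + a$ with $0 \leq a \leq n-1$, and then $\chi_n(m) = \chi_n(a)$ since $\chi_n$ is periodic of period $n$. Therefore the double sum collapses to $\sum_{m \geq 0} \chi_n(m)\, x^m$.

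Exactly the same argument applied to $n_0$ in place of $n$ yields $\frac{F_{n_0}(x)}{1-x^{n_0}} = \sum_{m \geq 0} \chi_{n_0}(m)\, x^m$. Since the two sequences $(\chi_n(m))_{m \geq 0}$ and $(\chi_{n_0}(m))_{m \geq 0}$ are identical, the two formal power series coincide. Because $\Q(x) \hookrightarrow \Q((x))$, a rational function is determined by its Laurent expansion, so the two rational functions $\frac{F_n(x)}{1-x^n}$ and $\frac{F_{n_0}(x)}{1-x^{n_0}}$ are equal.

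There is no real obstacle here; the only thing to be careful about is ensuring one is entitled to compare rational functions via their power series expansions (both denominators have nonzero constant term, so both sides genuinely lie in $\Z[[x]]$, justifying the identification). If one prefers a purely polynomial argument, an equivalent route is to clear denominators and verify $(1-x^{n_0}) F_n(x) = (1-x^n) F_{n_0}(x)$ directly by checking that both sides, when expanded, have the same coefficient $\chi_n(a) - \chi_n(a - n_0)$ at every power $x^a$; but the formal power series proof sketched above is cleaner.
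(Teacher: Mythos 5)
Your proposal is correct and matches the paper's own proof: the paper likewise expands $\frac{F_{n_0}(x)}{1-x^{n_0}}$ via the geometric series, identifies it with $\sum_{\gcd(a,n_0)=1} x^a = \sum_{\gcd(a,n)=1} x^a$, and concludes. You simply spell out the justification for comparing rational functions through their power series expansions in more detail.
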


\begin{proof}

Since $n_0$ is the radical of $n$, we have $\gcd(a,n)=1$ if and only if $\gcd(a, n_0)=1$. Therefore, we have 
\[ \frac{F_{n_0}(x)}{1-x^{n_0}} = F_{n_0}(x) \sum_{k=0}^{\infty} x^{kn_0} = \sum_{\substack{1 \leq a\\ \gcd(a,n_0)=1}} x^a = \sum_{\substack{1 \leq a\\ \gcd(a,n)=1}} x^a = \frac{F_n(x)}{1-x^n}. \] 
\end{proof}

\begin{cor}
Let $f \in \Z[x]$ be a non-cyclotomic irreducible polynomial. Then $f$ is a divisor of $F_{n}$ if and only if $f$ is a divisor of $F_{n_0}$. \qed
\end{cor}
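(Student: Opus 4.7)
The plan is to derive from \cref{prop:squarefree_condition} an explicit polynomial factorization of $F_n$ in terms of $F_{n_0}$ and a product of cyclotomic polynomials, and then invoke unique factorization in $\Z[x]$.

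First, I would cross-multiply the identity $\frac{F_{n_0}(x)}{1-x^{n_0}} = \frac{F_n(x)}{1-x^n}$ furnished by the proposition to obtain
\[
F_n(x) \cdot (1 - x^{n_0}) = F_{n_0}(x) \cdot (1 - x^n).
\]
Since $n_0$ is the radical of $n$, we have $n_0 \mid n$, and writing $n = n_0 m$ gives the polynomial identity
\[
\frac{1 - x^n}{1 - x^{n_0}} = 1 + x^{n_0} + x^{2 n_0} + \cdots + x^{(m-1) n_0} \in \Z[x],
\]
which by the standard factorization $1 - x^N = \prod_{d \mid N} \Phi_d(x)$ equals $\prod_{d \mid n,\, d \nmid n_0} \Phi_d(x)$. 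Substituting this back yields the clean factorization
\[
F_n(x) = F_{n_0}(x) \cdot \prod_{\substack{d \mid n \\ d \nmid n_0}} \Phi_d(x),
\]
which is the central identity for the proof.

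With this in hand, the corollary is immediate. If $f \in \Z[x]$ is a non-cyclotomic irreducible factor of $F_n$, then by unique factorization in $\Z[x]$ the polynomial $f$ must divide one of the factors on the right-hand side; since $f$ is not equal (up to sign) to any cyclotomic polynomial $\Phi_d$, it cannot divide $\prod_{d \mid n,\, d \nmid n_0} \Phi_d(x)$, and therefore $f$ divides $F_{n_0}$. Conversely, any divisor of $F_{n_0}$ is a divisor of $F_n$ by the same factorization. There is no real obstacle here: the only thing one needs to verify carefully is that $\frac{1-x^n}{1-x^{n_0}}$ is genuinely a product of cyclotomic polynomials (so that the non-cyclotomicity hypothesis on $f$ kicks in), which follows directly from $1 - x^N = \prod_{d \mid N} \Phi_d(x)$ applied to $N = n$ and $N = n_0$.
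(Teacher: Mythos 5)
Your proof is correct and is essentially the argument the paper intends: the corollary is stated with no written proof precisely because cross-multiplying \cref{prop:squarefree_condition} gives $F_n(x) = F_{n_0}(x)\prod_{d\mid n,\ d\nmid n_0}\Phi_d(x)$, after which unique factorization in $\Z[x]$ finishes it exactly as you describe. Your write-up just makes the implicit step explicit.
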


\begin{cor} \label{cor:derivative_negative_one}
Suppose that $n$ is an odd integer and $n_0$ is its radical. Then $F_n(-1)=F_{n_0}(-1)=0$ and
$ F_{n}'(-1) = F_{n_0}'(-1)$. \qed
\end{cor}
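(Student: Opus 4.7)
The plan is to derive both statements directly from the identity
\[ F_n(x)(1-x^{n_0}) = F_{n_0}(x)(1-x^n) \]
obtained by cross-multiplying in \cref{prop:squarefree_condition}, together with an elementary symmetry of $F_n$ when $n$ is odd.

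First I would establish the vanishing $F_n(-1)=0$. Since $n$ is odd and $\gcd(a,n)=1 \Leftrightarrow \gcd(n-a,n)=1$, the involution $a \mapsto n-a$ pairs up the indices of the sum defining $F_n$, with no fixed points (the equation $2a=n$ has no integer solution, and $a=0$ is excluded since $n>1$). Within each pair, the contribution at $x=-1$ is $(-1)^a + (-1)^{n-a} = (-1)^a\bigl(1+(-1)^n\bigr) = 0$ because $n$ is odd. The same argument applies to $n_0$, which is also odd as a divisor of $n$, so $F_{n_0}(-1)=0$. (Alternatively, the identity above forces $F_n(-1)\cdot 2 = F_{n_0}(-1)\cdot 2$ since $1-(-1)^n = 1-(-1)^{n_0}=2$, so the two vanishings are equivalent; one still needs the pairing to see that either one is $0$.)

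Next, differentiating the polynomial identity gives
\[ F_n'(x)(1-x^{n_0}) - n_0 x^{n_0-1} F_n(x) = F_{n_0}'(x)(1-x^n) - n x^{n-1} F_{n_0}(x). \]
Evaluating at $x=-1$ and using $1-(-1)^{n_0} = 1-(-1)^n = 2$ (both $n$ and $n_0$ odd) together with the vanishings $F_n(-1)=F_{n_0}(-1)=0$ just established, all terms containing $F_n(-1)$ or $F_{n_0}(-1)$ drop out and we are left with $2F_n'(-1) = 2F_{n_0}'(-1)$, which is the claim.

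There is no real obstacle here: the proof is a one-line consequence of \cref{prop:squarefree_condition} once one differentiates, provided one first notes that $F_n(-1)=0$. The only subtlety is making sure that in the pairing argument no index is fixed, which is precisely where the oddness of $n$ is used.
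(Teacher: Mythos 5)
Your proof is correct and follows the route the paper intends: the corollary is stated with its proof omitted as an immediate consequence of \cref{prop:squarefree_condition}, and your argument (the fixed-point-free pairing $a \mapsto n-a$ for the vanishing at $-1$, then differentiating the cross-multiplied identity and using $1-(-1)^n = 1-(-1)^{n_0} = 2$) is precisely the natural way to fill in that gap. No issues.
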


\subsection{Cyclotomic factors of $F_n$ }

We let $n$ be a positive squarefree integer here onwards, thanks to \cref{prop:squarefree_condition}. To get started we calculate the Fekete polynomial $F_n(x)$ in a few simple situations.
\begin{expl}
    \label{expl:p_and_pk}
    Let $p$ be a prime number. 
    Then we have the factorisation of $F_p(x)$.
    \begin{align}
        \label{eqn:Factorize_Fp}
        F_p(x) = x+x^2+\ldots+x^{p-1} = x \frac{1-x^{p-1}}{x-1} =x \prod_{\substack{d|p-1\\ d>1}} \Phi_d(x).
    \end{align}
\end{expl}

\begin{expl}
    \label{expl:2p}
    Let $p$ be an odd prime. Then we have the following factorisation of $F_{2p}(x)$.
    \begin{align}
    \begin{split}
        F_{2p}(x)&= (x+x^3+\cdots+x^{p-2})+(x^{p+2}+x^{p+4}+\cdots+x^{2p-1})\\
        &=x(1+x^2+\cdots+x^{p-3})(1+x^{p+1})=x\dfrac{x^{p-1}-1}{x^2-1}\dfrac{x^{2(p+1)}-1}{x^{p+1}-1}\\
        &=x\prod_{2<d\mid (p-1)}\Phi_d(x)\prod_{\substack{d\mid 2(p+1)\\d\nmid p+1}}\Phi_d(x).
    \end{split}
    \end{align}
\end{expl}
While we were able to completely describe the cyclotomic factors of $F_n$ in the two examples above, this is quite delicate in general. Moreover, these examples are unrepresentative of the general situation where non-cyclotomic factors also show up. Using SageMath we computed the factorisation of the Fekete polynomial $F_n$ as defined in \cref{def:Fekete} for all $n < 10^4$. When $n \neq p$ or $2p$ for any prime $p$, we observe that $F_n(x)/x$ has many cyclotomic factors and \emph{exactly one} irreducible non-cyclotomic factor. This striking observation leads us to the study of the cyclotomic factors of $F_n$.

Our starting point in this endeavor is the result that if $d|n$ then the $d$-th cyclotomic polynomial $\Phi_d$ is \emph{not} a factor of $F_n$. This is a direct consequence of the theory of Ramanujan sums (see \cite{hardy1979introduction} for a detailed treatment) which is partially summarized in the following proposition. 

\begin{prop} \label{prop:value_at_zeta_d}
Let $n$ be a positive integer and $d$ a divisor of $n$. Let $k$ be  a field such that $d$ is invertible in $k$. Suppose $\zeta_d$ is a primitive $d$-th root of unity in $k$.  Then 
\[ F_n(\zeta_d) = \frac{\mu(d) \varphi(n)}{\varphi(d)} .\]
Here $\mu$ denotes the Mobius function and we consider $F_n[x]$ as a polynomial over $k[x]$ under the canonical map $\Z[x] \to k[x].$

\end{prop}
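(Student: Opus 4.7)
The plan is to compute $F_n(\zeta_d)$ directly by a Möbius/geometric-series argument, rederiving the Ramanujan sum identity in a form valid over any field in which $d$ is invertible, rather than quoting it.

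First, rewrite the indicator of coprimality using Möbius inversion: $\ind_{\gcd(a,n)=1} = \sum_{e\mid\gcd(a,n)} \mu(e)$. The constant term ($a=0$) of $F_n$ contributes nothing since $\gcd(0,n) = n > 1$. Expanding $F_n(\zeta_d) = \sum_{a=0}^{n-1} \ind_{\gcd(a,n)=1}\,\zeta_d^a$ and swapping the order of summation turns it into a sum over $e\mid n$ of $\mu(e)$ times a geometric progression:
\[
F_n(\zeta_d) \;=\; \sum_{e\mid n} \mu(e) \sum_{b=0}^{n/e-1} (\zeta_d^e)^b.
\]

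Second, evaluate the inner sum. Since $\zeta_d$ has exact order $d$, we have $\zeta_d^e = 1$ iff $d\mid e$, in which case the inner sum equals $n/e$. Otherwise it telescopes to $(\zeta_d^n - 1)/(\zeta_d^e - 1) = 0$, using $d\mid n$ and the fact that $\zeta_d^e - 1$ is a nonzero element of $k$ (so invertible). This kills every term with $d \nmid e$, leaving
\[
F_n(\zeta_d) \;=\; \sum_{\substack{e\mid n \\ d\mid e}} \mu(e)\,\frac{n}{e}.
\]

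Third, substitute $e = dt$ with $t\mid n/d$. Because $n$ is squarefree throughout this subsection, $d$ and $n/d$ are coprime squarefree integers, so $\mu(dt) = \mu(d)\mu(t)$ for every divisor $t$ of $n/d$. Combined with the standard identity $\sum_{t\mid m}\mu(t)/t = \varphi(m)/m$, this gives
\[
F_n(\zeta_d) \;=\; \frac{n\,\mu(d)}{d}\sum_{t\mid n/d}\frac{\mu(t)}{t} \;=\; \frac{n\,\mu(d)}{d}\cdot\frac{\varphi(n/d)}{n/d} \;=\; \mu(d)\,\varphi(n/d).
\]
Finally, the multiplicativity of $\varphi$ together with $\gcd(d,n/d)=1$ gives $\varphi(n/d) = \varphi(n)/\varphi(d)$, producing the claimed formula.

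The whole argument is algebraic and works over any field where $d$ is invertible; the primitivity of $\zeta_d$ is used only as the group-theoretic statement that its order equals $d$. I do not foresee a real obstacle, but the most delicate point is the geometric-series vanishing: one must know that $\zeta_d^e - 1 \neq 0$ in $k$ whenever $d\nmid e$, which is exactly where the hypothesis that $d$ is invertible (and hence that a genuinely primitive $d$th root of unity exists in $k$) comes in.
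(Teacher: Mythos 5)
Your proof is correct and takes a genuinely different route from the paper's. The paper handles characteristic zero by embedding $\Q(\zeta_d)$ into $\C$ and quoting the classical evaluation of Ramanujan sums (\cite[Theorem 272]{hardy1979introduction}), and then transfers the identity to positive characteristic by observing that everything lives in $\Z[\zeta_d]$ and reducing modulo a prime above $p$. You instead give a single self-contained computation valid uniformly over any field in which $d$ is invertible: M\"obius inversion of the coprimality indicator, the geometric-series dichotomy according to whether $d \mid e$ (which is where primitivity of $\zeta_d$ enters, exactly as you say), and then the integer identity $\sum_{t \mid m}\mu(t)\,m/t = \varphi(m)$. This buys independence from the complex-analytic reference and removes the need for the reduction-mod-$\mathfrak{p}$ step; since the final rearrangement is an identity of integers that is only then mapped into $k$, there is no hidden division by elements that might vanish in $k$. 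One small caveat: the proposition as literally stated allows an arbitrary positive integer $n$, whereas your substitution $e = dt$ with $\mu(dt) = \mu(d)\mu(t)$ uses $\gcd(d, n/d) = 1$, i.e.\ squarefreeness of $n$. This matches the standing assumption of the subsection and every use of the proposition in the paper, so it is not a real gap; but if you want the full generality of the statement you should add the easy extra cases (if $d$ is not squarefree then every $\mu(dt)$ vanishes and both sides are $0$; if $d$ is squarefree but $n$ is not, restrict the sum to squarefree $t$ coprime to $d$ and check the resulting product formula against $\varphi(n)/\varphi(d)$).
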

\begin{proof}
If $\Char(k) = 0$, by embedding the subfield $\Q(\zeta_d) \subseteq k$ in $\mathbf{C}$, we have
\[ F_n(\zeta_d) = \sum_{\substack{1 \leq a \leq n \\ \gcd(a,n)=1}} \exp\left(\frac{2\pi i a}{d}\right),\] which is a Ramanujan sum (see \cite[Section 5.6]{hardy1979introduction}).
Hence by \cite[Theorem 272]{hardy1979introduction}, we have $F_n(\zeta_d) =  \frac{\mu(d) \varphi(n)}{\varphi(d)}$.

To deal with positive characteristics, we first note that the statement is entirely algebraic with all objects defined over the ring of integers $\mathcal{O}_F = Z[\zeta_d]$ of the cyclotomic field $F = \Q(\zeta_d)$. Indeed, we have
\[ F_n(x) \in \Z[x],\quad \zeta_d \in \mathcal{O}_F,\quad \mu(d) = \sum_{\substack{1 \leq a \leq n \\ \gcd(a,n)=1}} \zeta_d^a \in \mathcal{O}_F,\quad \phi(n) = \sum_{\substack{1 \leq a \leq n \\ \gcd(a,n)=1}} 1 \in \Z. \]
If $\Char(k) = p$, by considering reduction modulo a prime ideal of $\mathcal{O}_F$ above $p$, we see that the same formula should hold over $k$.
\end{proof}

\begin{cor}
\label{cor:d_divisor_n}
If $d|n$ then $F_n(\zeta_d) \neq 0.$ In other words, $\Phi_d$ is not a factor of $F_n.$ \qed
\end{cor}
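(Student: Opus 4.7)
The plan is to derive this corollary as an immediate consequence of \cref{prop:value_at_zeta_d}. By that proposition, for any divisor $d$ of $n$ and any primitive $d$-th root of unity $\zeta_d$ in a field where $d$ is invertible (and in particular, over $\mathbf{C}$), we have
\[ F_n(\zeta_d) = \frac{\mu(d)\,\varphi(n)}{\varphi(d)}. \]
To conclude that this quantity is nonzero, I need to verify that each of the three factors on the right hand side is nonzero.

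First, recall from the beginning of \cref{sec:cyclotomic_factors} that \cref{prop:squarefree_condition} allows us to restrict attention to the case that $n$ is squarefree. Since $d \mid n$ and $n$ is squarefree, $d$ itself is squarefree, so $\mu(d) = \pm 1 \neq 0$. The Euler totient values $\varphi(n)$ and $\varphi(d)$ are positive integers. Hence $F_n(\zeta_d)$ is a nonzero rational number, and in particular $\Phi_d(x)$ does not divide $F_n(x)$ in $\Z[x]$.

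The only step requiring a bit of care is the invocation of squarefreeness: the statement of the corollary does not explicitly assume $n$ is squarefree, but we are operating under the standing hypothesis (announced at the start of \cref{sec:cyclotomic_factors}) that $n$ is squarefree. If one wished to state the corollary unconditionally, one could note that for general $n$ with radical $n_0$, the divisors $d \mid n$ of interest for cyclotomic factorisation come through the identity $F_n(x)/(1-x^n) = F_{n_0}(x)/(1-x^{n_0})$ of \cref{prop:squarefree_condition}, which reduces everything to the squarefree situation. There is no real obstacle here; the whole content is the Ramanujan sum computation already carried out in \cref{prop:value_at_zeta_d}.
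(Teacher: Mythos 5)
Your main argument is correct and is exactly the paper's (implicit) proof: the corollary is stated with \qed because it follows immediately from \cref{prop:value_at_zeta_d} together with the standing assumption, made at the start of the subsection, that $n$ is squarefree, so that $d\mid n$ forces $\mu(d)=\pm 1$ and hence $F_n(\zeta_d)=\mu(d)\varphi(n)/\varphi(d)\neq 0$.

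One caution about your closing aside: the claim that "there is no real obstacle" to stating the corollary unconditionally is wrong. For non-squarefree $n$ the statement is genuinely false: if $d\mid n$ but $d\nmid n_0$, then $\zeta_d$ is a root of $(1-x^n)/(1-x^{n_0})$, and the identity $F_n(x)=F_{n_0}(x)\cdot\frac{1-x^n}{1-x^{n_0}}$ from \cref{prop:squarefree_condition} shows $F_n(\zeta_d)=0$. Concretely, $F_4(x)=x+x^3=x\,\Phi_4(x)$, so $\Phi_4\mid F_4$ even though $4\mid 4$. This is consistent with \cref{prop:value_at_zeta_d}, since $\mu(4)=0$. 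So the squarefreeness hypothesis is essential, not merely a convenience; your main proof correctly uses it, but the unconditional extension you sketch does not exist.
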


Hereafter we assume that $d \nmid n$. Let $d_1 = \gcd(d,n)$ so that $d_1 \neq d$, and let $S$ be the set $\{ 0 \leq a < d | \gcd(a,d_1) = 1\}$. Note that if we identify the integer residues modulo $d$ with the set $[d] = \{0, 1, \ldots, d-1\}$, then $S \subset [d]$ is the preimage of the set $(\Z/d_1)^{\times}$ under the reduction map $\Z/d \to \Z/d_1$. Thus we have $\#S = \frac{\varphi(d_1)d}{d_1}$. Consider the polynomial $F_S(x) = \sum_{s \in S} x^s$. The following lemma gives some useful information about $F_S$.
\begin{lem}
\label{lem:F_S}
    With notation as above, we have $F_S(x) = \frac{1-x^d}{1-x^{d_1}} F_{d_1}(x)$.
    Let $m|d$ be a positive integer, and $\zeta = \zeta_m$ be a primitive $m$-th root of unity. Then 
    \[ F_S(\zeta)= \begin{cases}0 &{\text{if }}\; m \nmid d_1 \\ \frac{d}{d_1} \frac{\mu(m) \varphi(d_1)}{\varphi(m)} &{\text{if }}\; m|d_1 .\end{cases} \]
\end{lem}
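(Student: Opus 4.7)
The plan is to prove the lemma in two parts, both following from a single decomposition of the index set $S$.

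For the formula $F_S(x) = \frac{1-x^d}{1-x^{d_1}} F_{d_1}(x)$, I would write every element $a \in S$ uniquely as $a = q d_1 + r$ with $0 \le r < d_1$ and $0 \le q < d/d_1$. Since $\gcd(a, d_1) = \gcd(r, d_1)$, membership in $S$ depends only on $r$, and $r$ must range over the units mod $d_1$. Thus
\[
F_S(x) = \sum_{q=0}^{d/d_1-1} \sum_{\substack{0 \le r < d_1\\ \gcd(r,d_1)=1}} x^{q d_1 + r} = \Bigl(\sum_{q=0}^{d/d_1-1} x^{q d_1}\Bigr) F_{d_1}(x),
\]
and the geometric sum equals $(1-x^d)/(1-x^{d_1})$, proving the first assertion.

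For the evaluation at $\zeta = \zeta_m$ with $m \mid d$, I would avoid substituting directly into the quotient $(1-x^d)/(1-x^{d_1})$ (which risks a $0/0$ issue when $m \mid d_1$) and instead evaluate the unfactored double sum. Split into two cases. If $m \nmid d_1$, then $\zeta^{d_1} \neq 1$, so $\sum_{q=0}^{d/d_1-1} \zeta^{q d_1} = (1 - \zeta^d)/(1 - \zeta^{d_1}) = 0$ because $m \mid d$ forces $\zeta^d = 1$; this gives $F_S(\zeta) = 0$. If $m \mid d_1$, then $\zeta^{d_1} = 1$, so the outer sum collapses to $d/d_1$, while the inner sum is exactly $F_{d_1}(\zeta)$. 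Applying \cref{prop:value_at_zeta_d} with $n = d_1$ (valid since $m \mid d_1$), we get $F_{d_1}(\zeta) = \mu(m)\varphi(d_1)/\varphi(m)$, yielding the desired formula.

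There is essentially no obstacle here; the only subtlety worth flagging is that one should work with the combinatorial sum rather than the rational function when $m \mid d_1$, since otherwise one would need a limiting argument to extract the value of $(1-x^d)/(1-x^{d_1})$ at a common zero. The characteristic-$p$ extension proceeds exactly as in \cref{prop:value_at_zeta_d}: everything is defined over $\Z[\zeta_d]$ and reduces modulo a prime above $\Char(k)$.
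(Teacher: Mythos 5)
Your proposal is correct and takes essentially the same approach as the paper: the paper also writes $F_S$ as the double sum $\sum_{0 \leq j < d/d_1} \sum_{\gcd(i,d_1)=1} x^{jd_1+i}$ (telescoping it against $1-x^{d_1}$ rather than factoring out the geometric series, a purely cosmetic difference) and then evaluates at $\zeta_m$ by the identical case split, invoking \cref{prop:value_at_zeta_d} when $m \mid d_1$. Your remark about avoiding a $0/0$ issue is sensible but not a real divergence, since $\frac{1-x^d}{1-x^{d_1}} = \sum_{q=0}^{d/d_1-1} x^{qd_1}$ is a polynomial and the paper implicitly evaluates it as such.
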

\begin{proof}
    Noting that the expression $(1-x^{d_1}) F_S(x)$ is a telescopic sum, we get
    \[ (1-x^{d_1}) F_S(x) = (1-x^{d_1}) \sum_{0 \leq j < d/d_1} \sum_{\substack{1 \leq i \leq d_1 \\ (i,d_1) = 1}} x^{jd_1+i} = (1-x^d) F_{d_1}(x).\]
    Since $m | d$ we have $1-\zeta^d = 0$. If $m \nmid d_1$, then $1-\zeta^{d_1} \neq 0$. So $F_S(\zeta) = 0$.
    If $m | d_1$, then the value of $\frac{1-x^d}{1-x^{d_1}}$ at $x = \zeta$ is $\frac{d}{d_1}$. Further \cref{prop:value_at_zeta_d} gives $F_{d_1}(\zeta) = \frac{\mu(m)\varphi(d_1)}{\varphi(m)}$.
\end{proof}

We can now state a sufficient condition for the $d$-th cyclotomic polynomial $\Phi_d$ to be a factor of $F_n$. Note that, in order to determine whether $F_n(\zeta_d)$ is zero or not, it is sufficient to consider $F_n(x) \pmod{x^d - 1}$.

\begin{prop}
\label{prop:equidistribution_condition_new}
    Let $n$ be a positive squarefree integer. Let $d > 1$ be an integer not dividing $n$. Let $d_1 = \gcd(d,n)$ and $S = \{ 1 \leq a \leq d | \gcd(a,d_1) = 1\}$. If the elements of $\{1 \leq a \leq n | \gcd(a,n) = 1\}$ when reduced modulo $d$, equidistribute among the elements of $S$, then the $d$-th cyclotomic polynomial $\Phi_d$ divides $F_n$.
\end{prop}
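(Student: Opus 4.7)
The plan is to show directly that $F_n(\zeta_d) = 0$ for any primitive $d$-th root of unity $\zeta_d$, using the equidistribution hypothesis together with \cref{lem:F_S}.

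First I would observe that since $d_1 = \gcd(d,n)$ divides $d$, any integer $a$ with $\gcd(a,n) = 1$ satisfies $\gcd(a, d_1) = 1$, and hence its reduction $\bar a \in \{0, 1, \dots, d-1\}$ modulo $d$ automatically lies in $S$. Therefore, writing $F_n$ modulo $x^d - 1$ groups the terms according to their residue class mod $d$, and the equidistribution hypothesis says that each element of $S$ is hit the same number of times, namely $\varphi(n)/\#S$ times. This gives the congruence
\begin{equation*}
F_n(x) \equiv \frac{\varphi(n)}{\#S}\, F_S(x) \pmod{x^d - 1}.
\end{equation*}
In particular, $F_n(\zeta_d) = \frac{\varphi(n)}{\#S}\, F_S(\zeta_d)$ for any primitive $d$-th root of unity $\zeta_d$.

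Next I would invoke \cref{lem:F_S} with $m = d$. Since $d_1$ is a proper divisor of $d$ (we have $d_1 \mid d$ from the definition, and $d_1 \neq d$ because $d \nmid n$ while $d_1 \mid n$), the relation $d \mid d_1$ fails, so the lemma yields $F_S(\zeta_d) = 0$. Thus $F_n(\zeta_d) = 0$ for every primitive $d$-th root of unity, and since $\Phi_d$ is the minimal polynomial of $\zeta_d$ over $\Q$, it follows that $\Phi_d \mid F_n$ in $\Z[x]$.

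The only genuinely non-trivial step is the passage to $F_n(x) \equiv \tfrac{\varphi(n)}{\#S} F_S(x) \pmod{x^d-1}$, which requires verifying that (i) every residue mod $d$ of an $a$ coprime to $n$ indeed lies in $S$, and (ii) the equidistribution hypothesis is precisely what converts the sum $\sum_{\gcd(a,n)=1} x^{a \bmod d}$ into a constant multiple of $\sum_{s \in S} x^s$. Both are essentially bookkeeping, so I would not expect a serious obstacle; once the reduction modulo $x^d - 1$ is correctly set up, the conclusion follows immediately from \cref{lem:F_S}.
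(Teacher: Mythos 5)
Your proof is correct and follows essentially the same route as the paper: both reduce $F_n$ modulo $x^d-1$ using the equidistribution hypothesis to obtain $F_n \equiv \frac{\varphi(n)}{\#S} F_S \pmod{x^d-1}$, and then apply \cref{lem:F_S} with $m = d$, noting that $d \nmid d_1$ because $d_1$ is a proper divisor of $d$. There is nothing to add.
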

\begin{proof}
    Let $F_S(x) = \sum_{s \in S} x^s$ be the polynomial introduced earlier. If the equidistribution condition holds, then by reducing modulo $d$ the exponents of monomials in $F_n$, we see that $F_n(x) \equiv c F_S(x) \pmod{x^d-1}$ for some constant $c$. To be precise, $c = c_{n,d} = \frac{\varphi(n)}{\#S} = \frac{\varphi(n)d_1}{\varphi(d_1)d}$.
    So it is enough to show that $F_S(\zeta_d) = 0$, which we get from \cref{lem:F_S} since $d > d_1$.
\end{proof}

\begin{rem}
\label{rem:equivalence_equidistribution}
    In the proof above, we saw that the equidistribution property implies $F_n \equiv c F_S \pmod{x^d-1}$. In fact they are equivalent. If $F_n = c F_S + (x^d-1) G$ for some $G \in \Q[x]$, then for each $s \in S$, we can sum the coefficients of the monomials $x^a$ with $a \equiv s \pmod d$ occuring on each side. This gives us that $\#\{1 \leq a \leq n | \gcd(a,n) = 1, a \equiv s \pmod d\} = c$ independent of $s$, yielding equidistribution.
\end{rem}

    It is interesting to ask if the converse of \cref{prop:equidistribution_condition_new} is true. 
    In all cases we have computed, the converse holds. See \cref{thm:equidistribution_provedcases} for the cases where the converse is proven to be true. This prompts us to make the conjecture.

\begin{conj}
\label{conj:equidistribution}
    Let $n, d, d_1, S$ be as in \cref{prop:equidistribution_condition_new}. Then $\Phi_d$ divides $F_n$ if and only if the elements of $\{1 \leq a \leq n | \gcd(a,n) = 1\}$ when reduced modulo $d$, equidistribute in $S$. Equivalently, $\Phi_d$ divides $F_n$ if and only if $x^d-1$ divides $F_n - \frac{\varphi(n)d_1}{\varphi(d_1)d} F_S$
\end{conj}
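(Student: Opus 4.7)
The forward direction of the conjecture is Proposition 2.6, so the task is to establish the converse: assume $\Phi_d \mid F_n$, and derive equidistribution. My plan is first to reformulate the equidistribution condition via the Chinese Remainder decomposition $\Q[x]/(x^d-1) \cong \prod_{f\mid d} \Q(\zeta_f)$. Under this decomposition, the identity $F_n \equiv cF_S \pmod{x^d-1}$ with $c = \varphi(n)d_1/(\varphi(d_1)d)$ breaks into componentwise equations $F_n(\zeta_f) = c\,F_S(\zeta_f)$ for each $f\mid d$. For $f\mid d_1$ (so $f\mid n$), Proposition 2.4 gives $F_n(\zeta_f) = \mu(f)\varphi(n)/\varphi(f)$, and Lemma 2.5 gives $F_S(\zeta_f) = (d/d_1)\mu(f)\varphi(d_1)/\varphi(f)$; with the chosen $c$ these match automatically. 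For $f \mid d$ with $f\nmid d_1$, Lemma 2.5 gives $F_S(\zeta_f)=0$, so the componentwise identity reduces to $F_n(\zeta_f)=0$, i.e., $\Phi_f\mid F_n$. Hence the conjecture is equivalent to the propagation statement: $\Phi_d\mid F_n$ if and only if $\Phi_f\mid F_n$ for every $f\mid d$ with $f\nmid d_1$.

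With this reformulation, the nontrivial direction becomes $\Phi_d\mid F_n \Rightarrow \Phi_f\mid F_n$ for every $f\mid d$ with $f\nmid d_1$. My strategy is to use the M\"obius expression
\[ F_n(x) = \sum_{e\mid n}\mu(e)\,x^e\,\frac{x^n-1}{x^e-1},\]
which, upon specialization at $\zeta_f$ for $f\nmid n$, yields $F_n(\zeta_f) = (1-\zeta_f^n)\sum_{e\mid n}\mu(e)/(1-\zeta_f^e)$. I would then induct on the number of prime factors of $d/d_1$, with the base case $d = d_1 p^k$ for a single prime $p$ and $k\geq 1$. In this base case the relevant $f$'s form a linearly ordered chain $d_1 p, d_1 p^2, \ldots, d_1 p^k=d$, and one can try to express $F_n(\zeta_{d/p})$ as a Galois trace of $F_n(\zeta_d)$ in the tower $\Q(\zeta_{d/p}) \subseteq \Q(\zeta_d)$, thereby propagating the vanishing downward along the chain; the inductive step would then bring in additional primes of $d/d_1$ one at a time.

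The main obstacle lies in the propagation step itself. A single relation $F_n(\zeta_d)=0$, even unpacked into $\varphi(d)$ equations over $\Q$ via Galois conjugation, does not automatically imply vanishing at $\zeta_f$ for a proper divisor $f$, since $F_n(\zeta_f)\in\Z[\zeta_f]$ and $F_n(\zeta_d)\in\Z[\zeta_d]$ are a priori algebraically independent elements of distinct subfields. The hard part will be to extract a systematic identity---perhaps by grouping divisors $e\mid n$ according to $\gcd(e,d)$ and analyzing the resulting partial M\"obius sums---that forces the vanishing to cascade to lower cyclotomic levels. A guide for this combinatorial analysis will be the proofs in \cref{thm:equidistribution_provedcases} for $n = pq$. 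It is also worth noting that the statement is delicate when $d$ has prime factors outside $n$: for instance, $n=6$, $d=8$ has $\Phi_8\mid F_6$ but $\Phi_4\nmid F_6$, suggesting that either additional hypotheses (such as $n$ odd or $\varphi(d_1)d\mid \varphi(n)d_1$) or a refined formulation is needed, and identifying the correct precise statement will be part of the plan.
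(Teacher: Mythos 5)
First, a point of orientation: \cref{conj:equidistribution} is stated in the paper as a conjecture. The paper proves only the ``if'' direction (\cref{prop:equidistribution_condition_new}), verifies the converse for prime $d$ (\cref{rem:equidistribute_prime_d}) and for the cyclotomic factors produced by \cref{thm:cyclotomic_factor_3} (\cref{thm:equidistribution_provedcases}), and leaves the general converse open; there is no proof in the paper to compare yours against. That said, your reduction is correct and genuinely useful: combining \cref{rem:equivalence_equidistribution} with the decomposition $\Q[x]/(x^d-1)\cong\prod_{f\mid d}\Q(\zeta_f)$, and using \cref{prop:value_at_zeta_d} and \cref{lem:F_S} to dispose of the components with $f\mid d_1$, you show the conjecture is equivalent to the downward-propagation statement that $\Phi_d\mid F_n$ forces $\Phi_f\mid F_n$ for every $f\mid d$ with $f\nmid d_1$. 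This reformulation is not recorded in the paper and is worth keeping.

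However, the propagation step is exactly what you do not prove, and in fact it cannot be proved, because the example you flag as ``delicate'' is an outright counterexample to the conjecture as stated. Take $n=6$, $d=8$, so $d_1=2$ and $S=\{1,3,5,7\}$. Then $F_6(x)=x+x^5=x\,\Phi_8(x)$ (consistent with \cref{expl:2p}), so $\Phi_8\mid F_6$; but $\{1,5\}$ reduced modulo $8$ hits the residues $1$ and $5$ once each and $3$ and $7$ not at all, so equidistribution fails. Equivalently, $c=\varphi(6)d_1/(\varphi(d_1)d)=\tfrac12$ and $F_6-\tfrac12 F_S=\tfrac12x(1-x^2)(1+x^4)$ has degree $7$, hence is not divisible by $x^8-1$; in your reformulation, $f=4$ divides $8$ with $4\nmid d_1$, yet $F_6(i)=2i\neq 0$. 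The same happens for $n=10$, $d=12$, where $\Phi_{12}\mid F_{10}$ but $\#S=6$ does not divide $\varphi(10)=4$. So the right conclusion of your analysis is not that the statement needs care but that it is false without an extra hypothesis (your proposed condition $\varphi(d_1)d\mid\varphi(n)d_1$ excludes both examples, as does restricting to $n$ not of the form $2p$), and any proof must begin from a corrected statement. Even granting such a correction, your induction along the chain $d_1p,\dots,d_1p^k$ remains only a plan: the natural map in the tower sends $F_n(\zeta_d)$ to $\Tr_{\Q(\zeta_d)/\Q(\zeta_{d/p})}F_n(\zeta_d)$, which is a sum of values of $F_n$ at several primitive $d$-th roots of unity rather than the value $F_n(\zeta_{d/p})$, so the identity you would need to cascade the vanishing downward is not supplied.
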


\begin{rem}
    In \cref{sec:pq} we give a complete description of the cyclotomic factors of $F_n$ when $n = pq$ is a product of two odd primes, contingent on \cref{conj:equidistribution}.
\end{rem}

\begin{rem}
\label{rem:equidistribute_prime_d}
    If $d = p$ is a prime not dividing $n$, then it is straightforward to see that \cref{conj:equidistribution} holds. This is because the only linear relation among $\{1, \zeta_p, \zeta_p^2, \ldots, \zeta_p^{p-1}\}$ is that they sum to $0$.
\end{rem}

We now write down a recursive formula for $F_n$. This will be useful in obtaining a neater description of $F_n$ in \cref{prop:form_of_F_n}, which will be crucial in describing certain explicit cyclotomic factors of $F_n$ (\cref{thm:cyclotomic_factor_3}) and their multiplicites (\cref{thm:multiplicity_of_Phid}).
\begin{prop} \label{prop:recursive_F}
    Let $p$ be a prime number such that $\gcd(p,n)=1.$ Then we have the following recursive formula 
    \[ \frac{F_{np}(x)}{1-x^{np}} = \dfrac{F_{n}(x)}{1-x^{n}} - \frac{F_{n}(x^p)}{1-x^{np}}. \]
\end{prop}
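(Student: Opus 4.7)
The plan is to expand both sides as formal power series and identify them combinatorially via the set of positive integers coprime to the relevant modulus. The key preliminary step, which is essentially the content of the proof of \cref{prop:squarefree_condition}, is the identity
\[ \frac{F_m(x)}{1-x^m} = F_m(x)\sum_{k=0}^{\infty} x^{km} = \sum_{\substack{a \geq 1 \\ \gcd(a,m)=1}} x^a, \]
valid for any positive integer $m$, viewed in $\Z[[x]]$. I would apply this with $m=n$ and with $m=np$.

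With that in hand, the main step is a set-theoretic partition. Since $\gcd(p,n) = 1$, the condition $\gcd(a,np)=1$ is equivalent to $\gcd(a,n)=1$ together with $p \nmid a$. Therefore I would split
\[ \{a \geq 1 : \gcd(a,n)=1\} = \{a \geq 1 : \gcd(a,np)=1\} \;\sqcup\; \{a \geq 1 : \gcd(a,n)=1,\ p \mid a\}, \]
and correspondingly
\[ \sum_{\substack{a \geq 1 \\ \gcd(a,n)=1}} x^a = \sum_{\substack{a \geq 1 \\ \gcd(a,np)=1}} x^a + \sum_{\substack{a \geq 1 \\ \gcd(a,n)=1,\ p \mid a}} x^a. \]

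For the second sum, I would substitute $a = pb$. Because $\gcd(p,n)=1$, the condition $\gcd(pb,n)=1$ reduces to $\gcd(b,n)=1$, so this sum becomes $\sum_{b \geq 1,\ \gcd(b,n)=1}(x^p)^b$. Applying the preliminary identity with $x$ replaced by $x^p$ rewrites this as $F_n(x^p)/(1-x^{np})$. The first sum is already $F_{np}(x)/(1-x^{np})$, and the left-hand side is $F_n(x)/(1-x^n)$, so rearranging yields the desired formula. There is no genuine obstacle here; the only point requiring care is verifying that all manipulations are legitimate as identities of formal power series (equivalently, in the localization $\Z[x][(1-x^n)^{-1},(1-x^{np})^{-1}]$), which is immediate.
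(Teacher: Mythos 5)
Your proof is correct and is essentially identical to the paper's: both expand $F_m(x)/(1-x^m)$ as the power series $\sum_{a\ge 1,\ \gcd(a,m)=1}x^a$ and then split the sum over $a$ coprime to $n$ according to whether $p\mid a$, reindexing the latter part by $a=pb$. The paper merely writes this partition as a subtraction in one displayed line, so there is nothing to add.
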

\begin{proof}
We have 
\[ \begin{aligned}
\dfrac{F_{np}(x)}{1-x^{np}}& =\sum_{\substack{1\leq a\\ \gcd(a,np)=1}}x^a
=\sum_{\substack{1\leq a\\ \gcd(a,n)=1}}x^a- \sum_{\substack{1\leq a \\\gcd(a,n)=1}}x^{pa}
= \dfrac{F_{n}(x)}{1-x^n}- \dfrac{F_{n}(x^p)}{1-x^{np}}.
\end{aligned}
\qedhere\]
\end{proof}
\begin{cor}
    Let $n>1$ be a squarefree integer and $d$ a positive integer not dividing $n$. If $\Phi_d$ is a  factor of $F_n$, then $\Phi_d$ is also a factor of $F_{np}$ for every prime number $p$ with ${\rm gcd}(d,p)=1$.
\end{cor}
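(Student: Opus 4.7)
The plan is to apply the recursive formula from \cref{prop:recursive_F} directly. First, observe that if $p \mid n$, the statement is essentially trivial: since $n$ is squarefree, the radical of $np$ is still $n$, so by \cref{prop:squarefree_condition} we have $F_{np}(x) = \tfrac{1-x^{np}}{1-x^n}\, F_n(x)$, and any factor of $F_n$ divides $F_{np}$.

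So we may assume $\gcd(n,p) = 1$, which allows us to invoke \cref{prop:recursive_F}. Clearing denominators there yields
\[
F_{np}(x) \;=\; \frac{1-x^{np}}{1-x^n}\, F_n(x) \;-\; F_n(x^p),
\]
where $\tfrac{1-x^{np}}{1-x^n} = 1 + x^n + x^{2n} + \cdots + x^{n(p-1)}$ is a genuine polynomial in $\Z[x]$.

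Next I would evaluate this identity at an arbitrary primitive $d$-th root of unity $\zeta_d$. By hypothesis $\Phi_d$ divides $F_n$, so $F_n(\zeta_d) = 0$ and the first term on the right vanishes. For the second term, the condition $\gcd(d,p) = 1$ is precisely what ensures that $\zeta_d^p$ is again a primitive $d$-th root of unity, so that $F_n(\zeta_d^p) = 0$ as well. Hence $F_{np}(\zeta_d) = 0$, and since this holds for every primitive $d$-th root of unity, we conclude $\Phi_d \mid F_{np}$.

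No significant obstacle arises here; the proof is a one-line consequence of the recursion, with the coprimality hypothesis $\gcd(d,p)=1$ entering solely to guarantee that raising to the $p$-th power permutes the primitive $d$-th roots of unity.
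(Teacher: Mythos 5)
Your proof is correct and follows the same route as the paper: evaluate the recursion of \cref{prop:recursive_F} at a primitive $d$-th root of unity and use that $\gcd(d,p)=1$ makes $\zeta_d^p$ again primitive. You are in fact slightly more careful than the paper, which silently assumes $\gcd(p,n)=1$ when invoking the recursion; your separate treatment of the case $p\mid n$ via \cref{prop:squarefree_condition}, and your clearing of denominators so that no evaluation of $\tfrac{1}{1-x^{np}}$ at $\zeta_d$ is needed, close those small gaps.
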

\begin{proof}
    If $\zeta$ is a primitive $d$-th root of unity, $\zeta^p$ is also a primitive $d$-th root of unity. Hence $F_n(\zeta)=F_n(\zeta^p)=0$. Therefore, \cref{prop:recursive_F} implies $F_{np}(\zeta)=0$ as well. 
\end{proof}
\begin{cor}
    Let $n>1$ be a squarefree integer and $d$ a positive integer not dividing $n$. Let $p,q$ be two distinct primes such that 
$p \equiv q \pmod{d}$. Then $\Phi_d$ is a factor of $F_{np}$ if and only if it is a factor of $F_{nq}.$
\end{cor}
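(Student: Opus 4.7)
My plan is to mimic the approach of the preceding corollary, but to leverage the symmetry between $p$ and $q$ so as to obtain the full biconditional in one swoop, by showing that $F_{np}(\zeta) = F_{nq}(\zeta)$ for every primitive $d$-th root of unity $\zeta$.

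First I would clear denominators in \cref{prop:recursive_F} to obtain the polynomial identity
\[ F_{np}(x) = \bigl(1 + x^n + x^{2n} + \cdots + x^{(p-1)n}\bigr) F_n(x) - F_n(x^p), \]
and record the analogous identity with $q$ in place of $p$. This step only requires $\gcd(p,n)=\gcd(q,n)=1$, which is the natural setting in which \cref{prop:recursive_F} applies (and is also implicit in the preceding corollary). Next, I fix a primitive $d$-th root of unity $\zeta$. The hypothesis $d \nmid n$ forces $\zeta^n \neq 1$, so the geometric factor evaluates to $(1-\zeta^{np})/(1-\zeta^n)$. The congruence $p \equiv q \pmod{d}$ yields $\zeta^p = \zeta^q$, and hence both $\zeta^{np} = \zeta^{nq}$ and $F_n(\zeta^p) = F_n(\zeta^q)$. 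Substituting $\zeta$ into the two polynomial identities, every corresponding term matches, and I conclude $F_{np}(\zeta) = F_{nq}(\zeta)$.

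Finally, since $\Phi_d$ is the $\Q$-minimal polynomial of $\zeta$, divisibility of an integer polynomial by $\Phi_d$ is equivalent to its vanishing at $\zeta$. The equality $F_{np}(\zeta) = F_{nq}(\zeta)$ therefore delivers the desired biconditional $\Phi_d \mid F_{np} \Leftrightarrow \Phi_d \mid F_{nq}$. There is no real obstacle in this argument: it is essentially a one-line consequence of the recursive formula, once one observes that $p \equiv q \pmod d$ implies $\zeta^p = \zeta^q$. The only subtlety is ensuring we never divide by $1-\zeta^n$, which is guaranteed by $d \nmid n$.
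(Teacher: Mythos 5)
Your proof is correct and follows essentially the same route as the paper: evaluate the recursive formula of \cref{prop:recursive_F} at a primitive $d$-th root of unity $\zeta$ and use $p \equiv q \pmod d$ to get $\zeta^p = \zeta^q$, hence $F_{np}(\zeta) = F_{nq}(\zeta)$. The only cosmetic difference is that you clear denominators first (so you only need $\zeta^n \neq 1$), whereas the paper instead observes $d \nmid np$ to evaluate the rational-function form directly; both handle the denominator issue equally well.
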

\begin{proof}
    Since $p \neq q$ and $p \equiv q \pmod d$, we get that $p \nmid d$. So $d \nmid np$ and we can evalute the formula in \cref{prop:recursive_F} at $\zeta_d$. Since $\zeta_d^p = \zeta_d^q$, we deduce that $F_{np}(\zeta_d) = F_{nq}(\zeta_d)$.
\end{proof}
Using this recursive formula of \cref{prop:recursive_F}, we get the following description for $F_n$ as a sum over divisors of $n$. This will used crucially in what follows.
\begin{prop}
\label{prop:form_of_F_n}
    Let $n>1$ be a squarefree integer.  Then
    \begin{align}
    \label{eq:form_of_Fn}
        \frac{F_n(x)}{1-x^n} = \sum_{m|n} \mu(m) \frac{x^m}{1-x^m}.
    \end{align}
\end{prop}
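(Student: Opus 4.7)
The plan is to convert the identity into a statement about formal power series and then apply M\"obius inversion. The key observation, already used in the proof of \cref{prop:squarefree_condition}, is that as a formal power series
\[ \frac{F_n(x)}{1-x^n} = \sum_{\substack{a \geq 1 \\ \gcd(a,n) = 1}} x^a. \]
So the desired equality is equivalent to
\[ \sum_{\substack{a \geq 1 \\ \gcd(a,n) = 1}} x^a = \sum_{m \mid n} \mu(m)\, \frac{x^m}{1-x^m}. \]

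First, I would expand each summand on the right-hand side as a geometric series $\frac{x^m}{1-x^m} = \sum_{k \geq 1} x^{km}$, swap the order of summation, and collect terms by the exponent $a = km$. This rewrites the right-hand side as $\sum_{a \geq 1} x^a \sum_{m \mid \gcd(a,n)} \mu(m)$. The classical M\"obius identity $\sum_{m \mid d} \mu(m) = [d = 1]$ then makes the inner sum vanish unless $\gcd(a,n) = 1$, in which case it equals $1$. This recovers the left-hand side exactly.

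An alternative proof, more in the spirit of the surrounding material, is to induct on the number of prime factors of $n$ using the recursive formula of \cref{prop:recursive_F}. The base case $n = p$ follows from a short direct computation (cf.\ \cref{expl:p_and_pk}). For the inductive step, apply \cref{prop:recursive_F} to $np$ with $p \nmid n$ squarefree; substituting the inductive hypothesis both for $F_n(x)/(1-x^n)$ and (after replacing $x$ by $x^p$) for $F_n(x^p)/(1-x^{np})$, the identity $\mu(pm) = -\mu(m)$ combined with the bijection between divisors of $np$ and pairs $(m, \epsilon)$ with $m \mid n$ and $\epsilon \in \{1,p\}$ reassembles the right-hand side for $np$. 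There is no substantive obstacle in either approach; the only mild subtlety is that the identity lives in the ring of formal power series (equivalently rational functions), not polynomials, so one should take care not to clear denominators prematurely when verifying the term-by-term matching.
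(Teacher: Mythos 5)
Your proposal is correct. Your primary argument — expanding each $\frac{x^m}{1-x^m}$ as a geometric series, interchanging summation, and invoking $\sum_{m \mid d}\mu(m) = [d=1]$ to isolate the exponents coprime to $n$ — is a genuinely different route from the paper's, which proves the identity by induction on the number of prime factors of $n$ via the recursion of \cref{prop:recursive_F} (your stated ``alternative'' is essentially the paper's proof verbatim). The Möbius-inversion argument is arguably cleaner: it is a one-step coefficient comparison in $\Z[[x]]$, it makes transparent \emph{why} the formula holds (the indicator of coprimality is the Möbius transform of the constant function $1$ restricted to divisors of $n$), and it does not actually use squarefreeness of $n$ at all, so it yields the identity for arbitrary $n>1$. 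What the paper's inductive route buys instead is that it showcases the recursion $\tilde F_{np} = \tilde F_n(x) - \tilde F_n(x^p)$, which the authors reuse repeatedly elsewhere (e.g.\ in \cref{subsec:small_deg_factors}), so the induction doubles as a worked illustration of that tool. Your closing caution about working in the ring of formal power series rather than clearing denominators is apt; the interchange of the two summations is harmless since each power of $x$ receives only finitely many contributions.
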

\begin{proof} We prove by induction on the number of prime factors of $n$. If $n=q$ is prime then
\[
\frac{F_q(x)}{1-x^q} = \frac{x+x^2+\cdots+x^{q}-x^q}{1-x^q} = \dfrac{x}{1-x}-\dfrac{x^q}{1-x^q} = \sum_{m|q} \frac{\mu(m) x^m}{1-x^m}.
\]
Now suppose that the statement is true for a squarefree integer $n$. Let $p$ be a prime number such that $\gcd(p,n)=1$. We show that the statement holds true for $np$. Indeed, by \cref{prop:recursive_F} and the induction hypothesis we have
\begin{align*}
\dfrac{F_{np}(x)}{1-x^{np}}&=\dfrac{F_{n}(x)}{1-x^n}- \dfrac{F_{n}(x^p)}{1-x^{np}}
=\sum_{m|n} \mu(m) \frac{x^m}{1-x^m} - \sum_{m|n} \mu(m) \frac{x^{pm}}{1-x^{pm}}\\
&=\sum_{m|n} \mu(m) \frac{x^m}{1-x^m} + \sum_{pm|pn} \mu(pm) \frac{x^{pm}}{1-x^{pm}}
=\sum_{m|np} \mu(m) \frac{x^m}{1-x^m}.
\end{align*}
\end{proof}
One consequence of \cref{prop:form_of_F_n} is a formula for $F_n'(-1)$. This will be useful later in \cref{sec:pq} to describe the value of the reduced Fekete polynomial $f_n(x)$ at $x = -1$.
\begin{prop}
\label{prop:derivative_at_minus1}
Let $n > 1$ be an integer and $n_0$ its radical. Then  
\[ F_n'(-1) = \sum_{1 \leq k \leq n, \gcd(k,n)=1} (-1)^{k-1} k = \begin{cases}
   \dfrac{n \varphi(n)}{2}  &  \text{if $n$ is even }\\
   \dfrac{ \mu(n_0) \varphi(n_0)}{2}&  \text{if $n$ is odd. }
\end{cases} \] 
\end{prop}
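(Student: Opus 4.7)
The plan is to separate the problem into the two parity cases for $n$, after first establishing the combinatorial form of $F_n'(-1)$. The first equality is immediate: differentiating $F_n(x) = \sum_{1 \leq k \leq n, \gcd(k,n)=1} x^k$ term by term and evaluating at $x = -1$ gives precisely $\sum_{1 \leq k \leq n, \gcd(k,n)=1} (-1)^{k-1} k$.

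For $n$ even, the key observation is that $\gcd(k,n) = 1$ forces $k$ odd, so $(-1)^{k-1} = 1$ throughout and the alternating sign disappears. The reduced sum $\sum_{\gcd(k,n)=1} k$ is then a standard exercise: pair $k$ with $n-k$, noting that both lie in the totatives, that $k \neq n-k$ (since $k = n/2$ would force $\gcd(k,n) = n/2 > 1$ when $n > 2$), and that each pair sums to $n$. There are $\varphi(n)/2$ such pairs, giving the value $n\varphi(n)/2$. The edge case $n = 2$ is checked directly.

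For $n$ odd, I would use \cref{cor:derivative_negative_one} to reduce to the squarefree case, so that we may assume $n = n_0$ and apply \cref{prop:form_of_F_n}. Writing $F_n(x) = (1-x^n) G(x)$ with
\[ G(x) = \sum_{m \mid n} \mu(m) \frac{x^m}{1-x^m}, \]
I would differentiate to get $F_n'(x) = -nx^{n-1} G(x) + (1-x^n) G'(x)$. At $x = -1$, since $n$ is odd, $1-x^n$ evaluates to $2$, and the identity $F_n(-1) = 2G(-1) = 0$ from \cref{cor:derivative_negative_one} forces $G(-1) = 0$, leaving
\[ F_n'(-1) = 2 G'(-1). \]
Since every divisor $m$ of the odd integer $n$ is odd, the termwise derivative $\frac{d}{dx}\frac{x^m}{1-x^m} = \frac{m x^{m-1}}{(1-x^m)^2}$ evaluates at $x = -1$ cleanly to $m/4$. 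Hence
\[ G'(-1) = \tfrac{1}{4} \sum_{m \mid n} \mu(m) m. \]
The final step is the multiplicative identity $\sum_{m \mid n} \mu(m) m = \prod_{p \mid n}(1-p) = (-1)^{\omega(n)} \varphi(n) = \mu(n) \varphi(n)$, valid for squarefree $n$. Combining these gives $F_n'(-1) = \mu(n)\varphi(n)/2$, which is the claimed formula.

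The only mild subtlety is the odd case: one must check that $G(-1) = 0$ (which follows from $F_n(-1) = 0$ and $1-(-1)^n = 2 \neq 0$) to kill the $-nx^{n-1}G(x)$ term; this is what allows the rational derivative to produce a polynomial value without needing to cancel poles. The even case cannot be handled by the same logarithmic-derivative trick, because the terms of $G(x)$ with $m$ even have poles at $x = -1$; this is why the direct pairing argument is used instead.
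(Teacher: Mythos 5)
Your proof is correct and follows essentially the same route as the paper: the pairing $k \leftrightarrow n-k$ for even $n$, and differentiation of the M\"obius-sum identity of \cref{prop:form_of_F_n} (after reducing to the squarefree case) for odd $n$. The only cosmetic difference is that you kill the $-nx^{n-1}G(x)$ term via $G(-1)=0$ deduced from $F_n(-1)=0$, whereas the paper kills the corresponding term via $\sum_{m\mid n}\mu(m)=0$ --- these are the same fact, since $G(-1)=-\tfrac12\sum_{m\mid n}\mu(m)$.
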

\begin{proof} 
Suppose $n$ is even. Then
\begin{align*}
F_n'(-1) = \sum_{\substack{1 \leq k \leq n\\ \gcd(k,n)=1}} (-1)^{k-1} k = \sum_{\substack{1 \leq k \leq n\\ \gcd(k,n)=1}} k = \sum_{\substack{1 \leq k \leq n/2\\ \gcd(k,n)=1}} \left(k \quad + \quad n-k \right) =\dfrac{n\varphi(n)}{2}.
\end{align*}
Now suppose $n$ is odd. By ~\cref{cor:derivative_negative_one}, we may assume that $n$ is squarefree. Differentiating \cref{eq:form_of_Fn}, we get
\[
F'_n(x)=nx^{n-1}\sum_{m|n} \mu(m) \frac{x^m}{x^m-1} +(x^n-1)\sum_{m|n} \mu(m) \frac{-mx^{m-1}}{(x^m-1)^2}.
\]
Hence
\[
F'_n(-1)=\dfrac{n}{2} \sum_{m|n} \mu(m) + \dfrac{1}{2} \sum_{m|n} \mu(m) m = \dfrac{1}{2} \mu(n)\sum_{m|n} \mu(n/m) m=\dfrac{1}{2}\mu(n)\varphi(n).
\]
\end{proof}

We now utilize \cref{prop:form_of_F_n} and the inclusion-exclusion principle to catch a big chunk of cyclotomic factors of $F_n$.
\begin{thm}
\label{thm:cyclotomic_factor_3}
    Let $n$ be a positive squarefree integer. Suppose $N > 1$ is a divisor of $n$ and $T$ is a partition of the set of all divisors of $N$ into two-element subsets $\{a_1, b_1\}, \{a_2, b_2\}, \ldots, \{a_l,b_l\}$. Let $D = \gcd_i (a_i + \mu(a_i)\mu(b_i) b_i)$. Then $\Phi_d$ divides $F_n$ for every $d|D$ such that $d \nmid n$.
\end{thm}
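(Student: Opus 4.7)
The plan is to use Proposition \ref{prop:form_of_F_n} to evaluate $F_n$ at a primitive $d$-th root of unity $\zeta_d$. Since $d \nmid n$, in fact $d \nmid m$ for every divisor $m$ of $n$, so both $1-\zeta_d^n \neq 0$ and each summand $\frac{\zeta_d^m}{1-\zeta_d^m}$ is well-defined. Consequently $\Phi_d \mid F_n$ iff the sum $\sum_{m|n} \mu(m) \frac{\zeta_d^m}{1-\zeta_d^m}$ vanishes.

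Since $n$ is squarefree and $N \mid n$, we may write $n = NM$ with $\gcd(N,M)=1$. Every divisor $m$ of $n$ factors uniquely as $m = ac$ with $a \mid N$, $c \mid M$, and $\mu(ac) = \mu(a)\mu(c)$. Hence
\[
\sum_{m|n} \mu(m) \frac{\zeta_d^m}{1-\zeta_d^m} \;=\; \sum_{c \mid M} \mu(c) \sum_{a \mid N} \mu(a) \frac{\zeta_d^{ac}}{1-\zeta_d^{ac}},
\]
so it suffices to show that the inner sum vanishes for each fixed $c \mid M$, which we do using the partition $T$.

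For each pair $\{a_i, b_i\}$, write
\[
S_i(c) \;=\; \mu(a_i) \frac{\zeta_d^{a_i c}}{1-\zeta_d^{a_i c}} + \mu(b_i) \frac{\zeta_d^{b_i c}}{1-\zeta_d^{b_i c}}.
\]
If $\mu(a_i)\mu(b_i) = -1$, the hypothesis $d \mid (a_i - b_i)$ gives $\zeta_d^{a_i c} = \zeta_d^{b_i c}$, and since $\mu(a_i) + \mu(b_i) = 0$ we get $S_i(c) = 0$. If $\mu(a_i)\mu(b_i) = 1$, the hypothesis $d \mid (a_i + b_i)$ gives $\zeta_d^{b_i c} = \zeta_d^{-a_i c}$, and the elementary identity $\frac{z}{1-z} + \frac{z^{-1}}{1-z^{-1}} = -1$ yields $S_i(c) = -\mu(a_i)$. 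Summing over $i$, the inner sum equals $-\sum_{i:\, \mu(a_i)\mu(b_i) = 1} \mu(a_i)$.

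The last step invokes the classical identity $\sum_{a \mid N} \mu(a) = 0$, valid because $N > 1$. Decomposing this sum via the partition $T$ gives
\[
0 \;=\; \sum_i (\mu(a_i) + \mu(b_i)) \;=\; 2\!\!\sum_{i:\, \mu(a_i)\mu(b_i) = 1}\!\! \mu(a_i),
\]
the contributions from pairs with $\mu(a_i)\mu(b_i) = -1$ already being zero. Therefore the inner sum vanishes for each $c$, as required. The main conceptual point — and the step one might miss — is that the cancellation happens \emph{across} pairs via M\"obius, not within each pair; the hypothesis $N > 1$ is exactly what forces this, and explains why the theorem requires a nontrivial divisor $N$ of $n$ rather than letting all pair-sums cancel individually.
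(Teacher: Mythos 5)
Your proof is correct and follows essentially the same route as the paper: both evaluate the sum $\sum_{m|n}\mu(m)\frac{\zeta_d^m}{1-\zeta_d^m}$ from \cref{prop:form_of_F_n}, split the divisors of $n$ as (divisor of $N$)$\times$(divisor of $n/N$), cancel each pair with $\mu(a_i)\mu(b_i)=-1$ outright, reduce each pair with $\mu(a_i)\mu(b_i)=1$ to $-\mu(a_i)$ via $\frac{z}{1-z}+\frac{z^{-1}}{1-z^{-1}}=-1$, and finish with $\sum_{a|N}\mu(a)=0$. Your explicit factoring of $\mu(c)$ out of the outer sum and your closing remark about where the cancellation really happens are minor presentational differences, not a different argument.
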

\begin{proof}
    Let $\zeta$ be a primitive $d$-th root of unity. Since $d \nmid n$, we can evaluate \cref{eq:form_of_Fn} at $\zeta$.
    \begin{align*}
        \frac{F_n(\zeta)}{1-\zeta^n} = \sum_{m|n} \frac{\mu(m)\zeta^m}{1-\zeta^m} = \sum_{m|(n/N)} \sum_{i=1}^l \frac{\mu(a_im)\zeta^{a_im}}{1-\zeta^{a_im}} + \frac{\mu(b_im)\zeta^{b_im}}{1-\zeta^{b_im}}.
    \end{align*}
    Without loss of generality, suppose that $\mu(a_i) = \mu(b_i)$ for $1 \leq i \leq l_0$ and $\mu(a_i) = -\mu(b_i)$ for $i > l_0$. Note that this means
    \begin{align*}
        2\sum\limits_{i=1}^{l_0} \mu(a_i) = \sum\limits_{i=1}^{l_0} \left( \mu(a_i) + \mu(b_i) \right) = \sum\limits_{i=1}^{l} \left( \mu(a_i) + \mu(b_i) \right) = \sum\limits_{k|N} \mu(k) = 0.
    \end{align*}
    For $i > l_0$, since $d | (a_i - b_i)$ we have $\zeta^{a_i} = \zeta^{b_i}$ and hence
    \begin{align*}
        \frac{\mu(a_im)\zeta^{a_im}}{1-\zeta^{a_im}} + \frac{\mu(b_im)\zeta^{b_im}}{1-\zeta^{b_im}} = 0.
    \end{align*}
    For $i \leq l_0$, since $d | (a_i + b_i)$ we have $\zeta^{a_i} = \zeta^{-b_i}$ and hence
    \begin{align*}
        \frac{\mu(a_im)\zeta^{a_im}}{1-\zeta^{a_im}} + \frac{\mu(b_im)\zeta^{b_im}}{1-\zeta^{b_im}} = -\mu(a_im).
    \end{align*}
    Putting things together, we get that
    \begin{align*}
        \frac{F_n(\zeta)}{1-\zeta^n} = \sum_{m|n} \frac{\mu(m)\zeta^m}{1-\zeta^m} = \sum_{m|(n/N)} \sum_{i=1}^{l_0} -\mu(a_im) = -\sum_{m|(n/N)} \mu(m) \sum_{i=1}^{l_0} \mu(a_i) = 0.
    \end{align*}
    This proves that $F_n(\zeta) = 0$, and hence $\Phi_d$ divides $F_n$.
\end{proof}

\begin{rem}
    \cref{thm:cyclotomic_factor_3} does not describe all cyclotomic factors of $F_N$. For instance, when $N = 105$, the cyclotomic polynomial $\Phi_{24}(x) = x^8 - x^4 + 1$ divides $F_N$, but this cannot be seen through \cref{thm:cyclotomic_factor_3}.
\end{rem}

We will use the following special case of this theorem the most.
\begin{cor}
\label{cor:d_dividing_pminus1}
    Let $r$ be a prime factor of $n$ and suppose $d$ is an integer dividing $r-1$ such that $d \nmid n$. Then $\Phi_d$ divides $F_n$. \qed
\end{cor}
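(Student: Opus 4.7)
The plan is to apply \cref{thm:cyclotomic_factor_3} with a carefully chosen divisor $N$ of $n$ and an appropriate partition $T$ of its divisors. Since $r$ is a prime factor of $n$, I would take $N = r$ and consider the only possible partition of the divisor set $\{1, r\}$ into two-element subsets, namely $T = \{\{1, r\}\}$.

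With this choice, there is a single pair $\{a_1, b_1\} = \{1, r\}$. Since $\mu(1) = 1$ and $\mu(r) = -1$, we have $\mu(a_1)\mu(b_1) = -1$, and therefore
\[
a_1 + \mu(a_1)\mu(b_1)\, b_1 = 1 - r.
\]
Hence $D = \gcd_i(a_i + \mu(a_i)\mu(b_i) b_i) = |1 - r| = r - 1$. The hypothesis of \cref{thm:cyclotomic_factor_3} thus yields that $\Phi_d$ divides $F_n$ for every $d$ dividing $r - 1$ with $d \nmid n$, which is exactly the statement of the corollary.

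There is no real obstacle here: the corollary is essentially just the simplest nontrivial instance of \cref{thm:cyclotomic_factor_3}, corresponding to the smallest possible $N$ (a single prime) and the unique partition of its two divisors. The only thing to verify is the correct sign bookkeeping with the Möbius function, which is immediate.
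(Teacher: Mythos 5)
Your proposal is correct and is precisely the intended instantiation of \cref{thm:cyclotomic_factor_3}: the paper states the corollary with a \qed, treating it as the immediate special case $N = r$ with the unique partition $\{\{1,r\}\}$, which is exactly what you carried out. The sign bookkeeping $\mu(1)\mu(r) = -1$ giving $D = r-1$ is right, so nothing further is needed.
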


For a prime number $p$, we also have a necessary condition for $\Phi_p$ to divide $F_n$.
\begin{prop}
\label{prop:prime_divisor_2}
    Let $p$ be a prime number such that $p \nmid n$. If $\Phi_p$ is a factor of $F_n$ then $p|\varphi(n)$, i.e., there exists a prime divisor $r$ of $n$ such that $p|r-1.$
\end{prop}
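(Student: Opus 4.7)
The plan is to transfer the condition $\Phi_p \mid F_n$ into an arithmetic equidistribution statement modulo $p$, and then use the elementary formula for $\varphi$ on squarefree integers. The key input is that $\Phi_p(x)=1+x+\cdots+x^{p-1}$ is the minimal polynomial of $\zeta_p$, so a polynomial with rational coefficients is divisible by $\Phi_p$ precisely when its reduction modulo $x^p-1$ is a scalar multiple of $\Phi_p$; equivalently, when we group its coefficients according to the residue of the exponent modulo $p$, all the resulting sums are equal.

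Concretely, for each $i\in\{0,1,\dots,p-1\}$ set
\[
N_i \;=\; \#\{\,1\le a\le n-1 \,:\, \gcd(a,n)=1,\ a\equiv i\pmod p\,\}.
\]
Then reducing the exponents in the definition of $F_n$ modulo $p$ gives $F_n(x)\equiv \sum_{i=0}^{p-1} N_i x^i \pmod{x^p-1}$, and $\sum_i N_i = \varphi(n)$. This is essentially the content of \cref{rem:equidistribution_equivalence} (or rather the remark following \cref{prop:equidistribution_condition_new}) specialised to $d=p$, and the case $d=p$ of \cref{conj:equidistribution} noted in \cref{rem:equidistribute_prime_d}.

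If $\Phi_p$ divides $F_n$, then by the observation above the reduction of $F_n$ modulo $x^p-1$ must be a scalar multiple of $\Phi_p$, forcing $N_0=N_1=\cdots=N_{p-1}$. Summing, we conclude $p\mid\varphi(n)$. Finally, since $n$ is squarefree (by the running assumption in \cref{subsec:reduction_to_squarefree}) we have $\varphi(n)=\prod_{r\mid n,\,r \text{ prime}}(r-1)$, and since $p$ is prime the divisibility $p\mid\varphi(n)$ forces $p\mid r-1$ for at least one prime divisor $r$ of $n$.

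There is no real obstacle here: the only thing to be careful about is to justify the step $\Phi_p\mid F_n \Leftrightarrow N_0=\cdots=N_{p-1}$, which rests on the fact that $\{1,\zeta_p,\zeta_p^2,\dots,\zeta_p^{p-1}\}$ satisfies, up to scalars, the single $\mathbb Q$-linear relation $1+\zeta_p+\cdots+\zeta_p^{p-1}=0$. Alternatively one can invoke \cref{prop:form_of_F_n} directly: since $p\nmid n$, each $\zeta_p^m$ for $m\mid n$ is again a primitive $p$-th root of unity, and one can rewrite $\sum_{m\mid n}\mu(m)\zeta_p^m/(1-\zeta_p^m)$ as a $\mathbb Z$-linear combination in the basis $\{1,\zeta_p,\dots,\zeta_p^{p-2}\}$, whose vanishing again yields the same equidistribution of $\{a:\gcd(a,n)=1\}$ modulo $p$, hence $p\mid\varphi(n)$.
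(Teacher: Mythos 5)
Your argument is correct and coincides with the paper's own alternative proof: it uses the fact that the only $\mathbb{Q}$-linear relation among $1,\zeta_p,\dots,\zeta_p^{p-1}$ is that they sum to zero, deduces equidistribution of the units modulo $p$, and concludes $p\mid\varphi(n)$. Note that the paper's primary proof is even shorter — writing $F_n=\Phi_p Q$ and evaluating at $x=1$ gives $\varphi(n)=F_n(1)=\Phi_p(1)Q(1)=pQ(1)$ immediately — but both routes are valid and yours is fully rigorous.
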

\begin{proof}
    If $\Phi_p$ is a factor of $F_n$ then $F_n=\Phi_pQ$ for some $Q\in \Z[x]$. Hence $\varphi(n)=F_n(1)=\Phi_p(1)Q(1)=pQ(1)$ and thus $p\mid \varphi(n)$.\\
    Alternatively, by \cref{rem:equidistribute_prime_d}, we know that \cref{conj:equidistribution} holds in this case. So the elements of $\{1 \leq a \leq n | \gcd(a,n) = 1\}$ when reduced modulo $p$, equidistribute in $S = \{0,1,\ldots,p-1\}$. Therefore $p = \#S$ divides $\varphi(n)$.
\end{proof}

Combining \cref{cor:d_dividing_pminus1} and \cref{prop:prime_divisor_2}, we have the following. 
\begin{cor}
\label{cor:prime_divisor}
    Let $p$ be a prime number. Then $\Phi_p$ is a factor of $F_n$ if and only if $p \nmid n$ and there exists a prime divisor $r$ of $n$ such that $p|r-1$. \qed
\end{cor}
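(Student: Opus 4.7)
The plan is to assemble the corollary from the two results cited just above the statement, together with the fact that $\Phi_d$ cannot divide $F_n$ whenever $d \mid n$ (which was recorded as \cref{cor:d_divisor_n}). So there is essentially no new content; the work is just a bookkeeping argument that each of the three conditions (primality of $p$, $p \nmid n$, existence of a prime divisor $r$ of $n$ with $p \mid r-1$) is equivalent in the presence of the others to $\Phi_p \mid F_n$.

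For the forward implication, I would first assume $\Phi_p \mid F_n$ and rule out $p \mid n$ using \cref{cor:d_divisor_n} applied with $d = p$; this forces $p \nmid n$. Having secured $p \nmid n$, \cref{prop:prime_divisor_2} gives $p \mid \varphi(n)$. Since we are in the squarefree case (as justified at the start of the subsection by \cref{prop:squarefree_condition}), we have the factorization $\varphi(n) = \prod_{r \mid n} (r - 1)$ over the prime divisors $r$ of $n$. Because $p$ is prime, Euclid's lemma then yields a prime divisor $r$ of $n$ with $p \mid r - 1$, completing the only-if direction.

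For the reverse implication, I would assume $p \nmid n$ and that there is a prime divisor $r$ of $n$ with $p \mid r - 1$. Then setting $d = p$ in \cref{cor:d_dividing_pminus1}, the hypotheses ``$d \mid r - 1$'' and ``$d \nmid n$'' are exactly $p \mid r - 1$ and $p \nmid n$, so the corollary applies directly and gives $\Phi_p \mid F_n$.

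There is no real obstacle here, since both directions are immediate applications of results already established; the only potentially subtle point is ensuring we may freely invoke the squarefree hypothesis, which is justified at the start of \cref{subsec:reduction_to_squarefree} via \cref{prop:squarefree_condition} (observing that if $n_0$ is the radical of $n$, then $\varphi(n_0)$ and $\varphi(n)$ have the same set of prime divisors $\{r - 1 : r \mid n\}$, so the criterion is truly about the radical).
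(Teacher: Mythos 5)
Your proposal is correct and matches the paper's (implicit) proof exactly: the paper presents this corollary with a \qed precisely because it is obtained by combining \cref{cor:d_divisor_n} (to force $p \nmid n$), \cref{prop:prime_divisor_2} (for the only-if direction, whose ``i.e.'' already encodes the step $p \mid \varphi(n) = \prod_{r \mid n}(r-1) \Rightarrow p \mid r-1$ for some prime $r \mid n$, valid since $n$ is squarefree in this section), and \cref{cor:d_dividing_pminus1} with $d = p$ (for the if direction). The only cosmetic slip is your closing parenthetical, where what you mean is that $n$ and its radical $n_0$ have the same prime divisors (not that $\varphi(n)$ and $\varphi(n_0)$ have the same prime divisors), so the stated criterion is unchanged under passing to $n_0$; this does not affect the argument.
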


We observe that all cyclotomic factors described by \cref{thm:cyclotomic_factor_3} satisfy the equidistribution condition mentioned in \cref{prop:equidistribution_condition_new}.

\begin{thm}
\label{thm:equidistribution_provedcases}
Let $n, N, D$ be as in \cref{thm:cyclotomic_factor_3}. Suppose $D \nmid n$. Let $d = D$ and $d_1 = \gcd(d,n)$. Then the elements of $\{1\leq a\leq n\mid \gcd(a,n)=1\}$ when reduced modulo $d$, equidistribute in $S = \{ 1 \leq a \leq d | \gcd(a,d_1) = 1\}$. Equivalently, we have that $x^d-1$ divides $F_n - c F_S = F_n - c \sum\limits_{s \in S} x^s$ for some constant $c$.
\end{thm}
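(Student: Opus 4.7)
The plan is to use the equivalent formulation from Remark \ref{rem:equivalence_equidistribution}: it suffices to show that $x^d - 1$ divides the polynomial
\[ W_S(x) := F_n(x) - c F_S(x), \qquad \text{where } c = \frac{\varphi(n)\, d_1}{d\, \varphi(d_1)}. \]
I will verify this by showing $W_S(\zeta) = 0$ for every $d$-th root of unity $\zeta$, primitive or not. Writing $\zeta = \zeta_m$ as a primitive $m$-th root of unity for some divisor $m \mid d$, the argument splits into two cases depending on whether $m$ divides $d_1$ or not.

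In the case $m \mid d_1$, we have $m \mid n$, so Proposition \ref{prop:value_at_zeta_d} gives $F_n(\zeta_m) = \mu(m)\varphi(n)/\varphi(m)$, and Lemma \ref{lem:F_S} gives $F_S(\zeta_m) = (d/d_1)\cdot \mu(m)\varphi(d_1)/\varphi(m)$. A direct comparison shows that the chosen constant $c$ is exactly the one that forces $F_n(\zeta_m) = c F_S(\zeta_m)$, hence $W_S(\zeta_m) = 0$. This case is a mechanical check motivated by the values at $m = 1$ (which determines $c$).

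In the case $m \nmid d_1$, Lemma \ref{lem:F_S} immediately gives $F_S(\zeta_m) = 0$, so it remains to show $F_n(\zeta_m) = 0$. The key observation is that $m \mid d = D$ together with $m \nmid d_1 = \gcd(D,n)$ forces $m \nmid n$. Since $m \mid D$ and $m \nmid n$, Theorem \ref{thm:cyclotomic_factor_3} (applied to the very same partition $T$ of the divisors of $N$ that produced $D$) yields $\Phi_m \mid F_n$, whence $F_n(\zeta_m) = 0$, and therefore $W_S(\zeta_m) = 0$. This is the step that actually uses the hypothesis of the theorem and is, I expect, the only nontrivial point in the argument.

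Combining the two cases, $W_S$ vanishes at every $d$-th root of unity, so $x^d - 1 \mid W_S(x)$. Then, following Remark \ref{rem:equivalence_equidistribution}, writing $F_n = c F_S + (x^d - 1) G$ and comparing, for each $s \in S$, the sum of coefficients of monomials $x^a$ with $a \equiv s \pmod d$ on both sides yields
\[ \#\{ 1 \leq a \leq n : \gcd(a,n) = 1,\ a \equiv s \pmod d \} = c, \]
independent of $s \in S$, which is the desired equidistribution. (Note that if $\gcd(a,n) = 1$ then $\gcd(a, d_1) = 1$ since $d_1 \mid n$, so the reductions modulo $d$ of such $a$ indeed all lie in $S$.)
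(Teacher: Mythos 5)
Your proof is correct and follows essentially the same route as the paper: both reduce to showing $x^d-1 \mid F_n - cF_S$ via \cref{rem:equivalence_equidistribution}, then check vanishing at each $\zeta_m$ for $m \mid d$ by splitting into the two cases (your split on $m \mid d_1$ versus $m \nmid d_1$ is equivalent to the paper's split on $m \mid n$ versus $m \nmid n$, since $m \mid d$), invoking \cref{prop:value_at_zeta_d}, \cref{lem:F_S}, and \cref{thm:cyclotomic_factor_3} in exactly the same way.
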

\begin{proof}
    Let $c = \frac{\varphi(n) d_1}{d \varphi(d_1)}$ and consider the polynomial
    $W(x) = F_n(x) - c F_S(x)$. By \cref{rem:equivalence_equidistribution} it is enough to show that $x^d-1$ divides $W(x)$.
    
    Let $m$ be any integer dividing $d$, and $\zeta_m$ be a primitive $m$-th root of unity. If $m \nmid n$ then we know $F_n(\zeta_m) = 0$ by \cref{thm:cyclotomic_factor_3} and $F_S(\zeta_m) = 0$ by \cref{lem:F_S}. Hence $W(\zeta_m) = 0$.
    If $m | n$ then $m | d_1 = \gcd(d,n)$, and we again deduce that $W(\zeta_m) = 0$ by \cref{prop:value_at_zeta_d} and \cref{lem:F_S}.
\end{proof}

\subsection{Multiplicity of cyclotomic factors}
In this section, we determine the multiplicity of cyclotomic factors of $F_n.$ To do so, we need to investigate the value $F'_n(\zeta_d)$ whenever $F_n(\zeta_d) = 0$. By \cref{cor:d_divisor_n}, $\zeta_d$ is not a root of $F_n$ if $d|n$. Therefore, we can assume that $d \nmid n$ in our investigation. By \cref{prop:form_of_F_n} we have 
\begin{equation} \label{eq:derivative_of_f}
F'_n(x)=nx^{n-1}\sum_{m|n} \mu(m) \frac{x^m}{x^m-1} +(x^n-1)\sum_{m|n} \mu(m) \frac{-mx^{m-1}}{(x^m-1)^2}.
\end{equation}
We introduce the following polynomial
\[ G_n (x) = (x^n-1)^2 \sum_{m|n} \mu(m) \frac{mx^{m}}{(x^m-1)^2}, \]
so that \cref{eq:derivative_of_f} becomes
\begin{equation}
\label{eq:f_and_g}
x(x^n-1)F'_n(x)=nx^{n}F_n(x)-G_n(x).
\end{equation}
Expanding this out using the definition of $F_n$, we have the following explicit formula
\begin{equation}
\begin{split}
    \label{eq:G_n}
    G_n(x) & = nx^n F_n(x)-(x^{n+1}-x) F'_n(x) = \sum_{\substack{1 \leq a \leq n \\ \gcd(a,n)=1}} \left(ax^a + (n-a)x^{n+a}\right)\\
    & = \sum_{\substack{1 \leq a \leq n \\ \gcd(a,n)=1}} (n-a)[x^{n+a}+x^{n-a}] = x^n \sum_{\substack{1 \leq a \leq n \\ \gcd(a,n)=1}} (n-a)[x^a + x^{-a}]
\end{split}
\end{equation}

\begin{lem} \label{lem:big_d}

Let $d$ be a positive integer and $n$ a positive squarefree integer. Let $\zeta = \zeta_d$ be a primitive $d$-th root of unity. 
\begin{enumerate}
    \item  If $n$ is even then $\zeta_4$ is a  root of $G_n.$
    \item If $d \geq 2n$ then $G_n(\zeta) \neq 0$ unless $n=2$ and $d=4.$
    \end{enumerate}
\end{lem}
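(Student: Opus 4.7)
The plan is to first reduce both parts to a common real-valued quantity. Using the explicit formula
\[ G_n(x) = x^n \sum_{\substack{1 \le a \le n-1 \\ \gcd(a,n)=1}} (n-a) \bigl[ x^a + x^{-a} \bigr] \]
derived just before the lemma in \cref{eq:G_n}, evaluation at $\zeta = \zeta_d$ gives $G_n(\zeta) = 2 \zeta^n T$, where
\[ T = \sum_{\substack{1 \le a \le n-1 \\ \gcd(a,n)=1}} (n-a) \cos(2\pi a / d) \in \mathbb{R}. \]
Since $\zeta^n \neq 0$, it suffices to analyze when $T$ vanishes.

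For part (1), with $d = 4$ and $\zeta = \zeta_4 = i$, note that the squarefree assumption and $n$ even force $2 \mid n$, so every $a$ in the summation is odd. For odd $a$, $i^a + i^{-a} = i^a + (-i)^a = 0$, so every term in the sum vanishes and $G_n(i) = 0$.

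For part (2), the key idea is to pair $a$ with $n - a$ (both coprime to $n$) and exploit $\theta_a + \theta_{n-a} = \alpha$ where $\theta_a = 2\pi a/d$ and $\alpha = 2\pi n/d \le \pi$ (by the hypothesis $d \ge 2n$). Expanding $\cos(\alpha - \theta_a) = \cos\alpha \cos\theta_a + \sin\alpha \sin\theta_a$, the contribution of a pair $\{a, n-a\}$ with $1 \le a < n/2$ rewrites as
\[ (n-a)\cos\theta_a + a \cos\theta_{n-a} = \bigl[(n-a) + a \cos\alpha\bigr] \cos\theta_a + a \sin\alpha \sin\theta_a. \]
Since $\cos\alpha \ge -1$, the coefficient $(n-a) + a\cos\alpha$ is at least $n - 2a > 0$; since $a < n/2$ we have $\theta_a < \pi n/d \le \pi/2$, so $\cos\theta_a > 0$ and $\sin\theta_a > 0$; and $\sin\alpha \ge 0$ because $\alpha \in (0, \pi]$. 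Thus each pair contributes strictly positively to $T$. Whenever $n \ge 3$, the choice $a = 1$ gives such a valid pair, forcing $T > 0$.

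The only remaining case is $n = 2$, where the summation collapses to the single (self-paired) term $T = \cos(2\pi/d)$. This is zero exactly when $d = 4$, the excluded case, and strictly positive for every $d > 4$ since then $2\pi/d < \pi/2$. The mildly delicate step is verifying nonnegativity of both summands in the pair-contribution identity uniformly for $d \ge 2n$, but this is immediate once one notices the bound $\cos\alpha \ge -1$ and that the range constraint $a < n/2$ together with $d \ge 2n$ pins $\theta_a$ inside $(0, \pi/2)$.
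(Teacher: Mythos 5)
Your proof is correct and follows essentially the same route as the paper: both reduce $G_n(\zeta)$ to a real cosine sum, pair $a$ with $n-a$, and show each pair contributes positively when $d \ge 2n$, isolating $n=2$, $d=4$ as the only degenerate case. The only cosmetic difference is that you expand $\cos(\alpha-\theta_a)$ via the addition formula, whereas the paper uses the sum-to-product identity to write the paired contribution as $(n-2a)\cos\theta_a$ plus a product of two nonnegative cosines.
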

\begin{proof}
    The first part follows immediately from \cref{eq:G_n} and the fact that $\zeta_4^a + \zeta_4^{-a} = 0$ for odd integers $a$. 
    For the second part we first rewrite \cref{eq:G_n} as follows
    \begin{align*} 
    x^{-n} G_n(x) &= \sum_{\substack{1 \leq a \leq \frac{n}{2}\\ \gcd(a,n)=1}} \left[(n-a)(x^a+x^{-a})+a (x^{n-a}+x^{a-n})  \right] \\ 
                  &= \sum_{\substack{1 \leq a \leq \frac{n}{2}\\ \gcd(a,n)=1}} (n-2a)[x^a+x^{-a}] + \sum_{\substack{1 \leq a \leq \frac{n}{2}\\ \gcd(a,n)=1}} a [x^a+x^{-a}+x^{n-a}+x^{a-n}].
    \end{align*}
    Working over $\C$, if we let $\zeta_d = e^{\frac{2 \pi i}{d}}$, then by Euler formula we have 
    \[ \zeta_d^{-n} G_n(\zeta_d) = 2\sum_{\substack{1 \leq a \leq \frac{n}{2}\\ \gcd(a,n)=1}} (n-2a) \cos\left(\frac{2 \pi a}{d}\right) + 4a\sum_{\substack{1 \leq a \leq \frac{n}{2}\\ \gcd(a,n)=1}} \cos\left(\frac{\pi n}{d}\right) \cos\left(\frac{(n-2a) \pi}{d}\right) \]
    Since $d \geq 2n$ and $1 \leq a \leq \frac{n}{2}$, each term in the sum is non-negative and we get
    $\zeta_d^{-n} G_n(\zeta_d) \geq 0$.
    Further, if $G_n(\zeta_d)=0$ then it must be the case that $(n-2)\cos\left(\frac{2 \pi}{d}\right)=0$ and $\cos\left(\frac{n\pi }{d}\right)\cos\left(\frac{(n-2)\pi}{d}\right) = 0$. This implies that $d=4$ and $n=2.$   
\end{proof}

We remark that by \cref{eq:f_and_g}, $\zeta = \zeta_d$ is simple root of $F_n$ if and only if  $F_n(\zeta)=0$ and $G_n(\zeta) \neq 0$. 
Using the definition of $G_n$, we also get the following recursive formula for $G_n$ analogous to the one for $F_n$ described in \cref{prop:recursive_F}. If $p \nmid n$, then
\[ G_{np}(x) = \left(\frac{1-x^{np}}{1-x^n} \right)^2 G_n(x) - pG_n(x^p).\]
To study the multiplicity of $\Phi_d$ as a factor of $G_n$, we will use a general proposition.
\begin{prop} \label{prop:multiplicity_nonsense}
    Let $G(x)=\frac{P(x)}{Q(x)}$ where $P(x), Q(x)$ are polynomials with rational coefficients. 
    Let $d$ be a positive number and $\zeta = \zeta_d$ a primitive $d$-th root of unity such that $Q(\zeta) \neq 0$. Let $p$ be a prime number such that $p \nmid d$.
    Let $G_p(x)$ be the following rational function 
    \[ G_p(x) = G(x) - pG(x^p). \] Then  
    \[ \mult_{\zeta_d}(G) = \mult_{\zeta_d}(G_p).\]
\end{prop}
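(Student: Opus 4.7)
The plan is to compare the local Taylor expansions of $G(x)$ and $G(x^p)$ at $\zeta_d$, and reduce the problem to verifying that a single explicit leading coefficient is nonzero. Since $Q(\zeta_d) \neq 0$, $G$ is analytic at $\zeta_d$, so $m := \mult_{\zeta_d}(G) \geq 0$, and we can write $G(x) = \sum_{k \geq m} a_k (x - \zeta_d)^k$ with $a_m \neq 0$. Because $P, Q \in \Q[x]$ and $\gcd(p, d) = 1$, the Galois automorphism $\sigma_p \in \Gal(\Q(\zeta_d)/\Q)$ with $\sigma_p(\zeta_d) = \zeta_d^p$ is well-defined, and since $G^{(k)}$ also has rational coefficients, the $k$-th Taylor coefficient of $G$ at $\zeta_d^p$ is $\sigma_p(a_k)$; in particular, $\mult_{\zeta_d^p}(G) = m$ as well.

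Next, I would expand $G(x^p)$ at $x = \zeta_d$ via the local change of variable $x^p - \zeta_d^p = p\zeta_d^{p-1}(x - \zeta_d) + O((x - \zeta_d)^2)$. Plugging this into the Taylor expansion of $G$ at $\zeta_d^p$ yields
\[
G(x^p) = \sigma_p(a_m)\, p^m \zeta_d^{(p-1)m} (x - \zeta_d)^m + O((x - \zeta_d)^{m+1}).
\]
Combining with $G(x) = a_m (x - \zeta_d)^m + O((x - \zeta_d)^{m+1})$, the coefficient of $(x - \zeta_d)^m$ in $G_p(x) = G(x) - p G(x^p)$ is
\[
\alpha := a_m - p^{m+1} \zeta_d^{(p-1)m} \sigma_p(a_m),
\]
and it suffices to show $\alpha \neq 0$ in order to conclude $\mult_{\zeta_d}(G_p) = m$.

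The nonvanishing of $\alpha$ is the crux of the proof and the main obstacle. I would argue by contradiction: if $\alpha = 0$, then $a_m = p^{m+1} \zeta_d^{(p-1)m} \sigma_p(a_m)$, and iterating this relation by repeatedly applying $\sigma_p$ gives, after $k$ steps,
\[
a_m = p^{k(m+1)} \zeta_d^{(p-1)m(1 + p + \cdots + p^{k-1})} \sigma_p^k(a_m).
\]
Choosing $k$ to be the multiplicative order of $p$ modulo $d$, we have $\sigma_p^k = \mathrm{id}$ and $(p-1)(1 + p + \cdots + p^{k-1}) = p^k - 1 \equiv 0 \pmod{d}$, so the $\zeta_d$-factor collapses to $1$. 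The identity reduces to $a_m = p^{k(m+1)} a_m$, which forces $a_m = 0$ since $p \geq 2$ and $k(m+1) \geq 1$, contradicting $a_m \neq 0$. Alternatively, one could take the $\Q(\zeta_d)/\Q$-norm of both sides of $a_m = p^{m+1} \zeta_d^{(p-1)m} \sigma_p(a_m)$ and use that the norm of a root of unity lying in $\Q$ equals $\pm 1$, arriving at the same contradiction.
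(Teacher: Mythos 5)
Your proof is correct, and it reaches the same decisive mechanism as the paper's but packages it more efficiently. The paper proves the equivalence ``$G(\zeta)=\cdots=G^{(k)}(\zeta)=0$ iff $G_p(\zeta)=\cdots=G_p^{(k)}(\zeta)=0$'' by induction on $k$, using a chain-rule expansion of $G_p^{(k)}$ whose key term is $G^{(k)}(x)-p^{k+1}x^{k(p-1)}G^{(k)}(x^p)$ plus lower-order derivatives of $G(x^p)$ with unspecified polynomial coefficients; the crux at each stage is that the relation $G^{(k)}(\zeta)=p^{k+1}\zeta^{k(p-1)}G^{(k)}(\zeta^p)$, iterated around the orbit $\zeta\mapsto\zeta^p$, forces vanishing. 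You bypass the induction entirely by working with the Taylor expansion at $\zeta_d$ and isolating the single order-$m$ coefficient $\alpha=a_m-p^{m+1}\zeta_d^{(p-1)m}\sigma_p(a_m)$; since both $G(x)$ and $G(x^p)$ vanish to order at least $m$ at $\zeta_d$, everything reduces to $\alpha\neq 0$, and your relation $\alpha=0$ is (up to the factor $m!$) exactly the paper's relation at $k=m$, killed by the same Galois-orbit iteration (or, as you note, by taking norms to $\Q$ and comparing $p$-adic valuations). What your route buys is the elimination of the inductive bookkeeping and of the formula with the undetermined $M_{k,h}$; what it quietly uses, and what you correctly justify, is that $\zeta_d^p$ is again a primitive $d$-th root of unity so that $Q(\zeta_d^p)=\sigma_p(Q(\zeta_d))\neq 0$ and the Taylor coefficients of $G$ at $\zeta_d^p$ are the $\sigma_p$-conjugates of those at $\zeta_d$. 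Both arguments implicitly assume $G\not\equiv 0$ (otherwise the claim is vacuous), so no gap there.
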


\begin{proof}

By induction, we can see that for each $k \geq 0$ 
\begin{equation} \label{eq:recursive_G} G_p^{(k)}(x) = G^{(k)}(x) - p^{k+1} x^{k(p-1)} G^{(k)}(x^p) + \sum_{0 \leq h \leq k-1} M_{k,h}(x) G^{(h)}(x^p), 
\end{equation} 
where $M_{k,h} \in \Q[x].$ In order to prove the above statement, we will show that for all $k \geq 0$, $G(\zeta) = \cdots= G^{(k)}(\zeta)=0$ if and only if $G_p(\zeta) = \cdots= G_p^{(k)}(\zeta)=0$.

Let us consider the base case $k=0.$ Suppose that $G(\zeta)=0.$ Since $\zeta$ and $\zeta^p$ are Galois-conjugate and $G$ is a rational function with rational coefficients, $G(\zeta^p)=0$ as well. We conclude that $G_p(\zeta)=0$. Conversely, suppose that $G_p(\zeta)= 0$, i.e., $G(\zeta)  = pG(\zeta^p)$. Since $G_p \in \Q(x)$, we have this equality for any primitive $d$-th root of unity. In particular $G(\zeta) = p G(\zeta^p) = p^2 G(\zeta^{p^2}) = \ldots = p^{\varphi(d)} G(\zeta)$. This means that $G(\zeta) = 0$.

Assume that the statement holds for $k-1.$ Let us show that it is true for $k$ as well. First, suppose that 
    $ G(\zeta) = \cdots= G^{(k)}(\zeta)=0$.
By the induction hypothesis, we already know that 
    $ G_p(\zeta) = \cdots= G_p^{(k-1)}(\zeta)=0$.
Since $\zeta$ and $\zeta^p$ are Galois conjugate over $\Q$, we know that $G^{(h)}(\zeta^p)=0$ for $0 \leq h \leq k$ as well. By \cref{eq:recursive_G}, we conclude that $G_p^{(k)}(\zeta)=0$. Conversely, assume that 
$ G_p(\zeta) = \cdots= G_p^{(k)}(\zeta)=0$.
By the induction hypothesis, we know that 
$ G(\zeta) = \cdots= G^{(k-1)}(\zeta)=0$.
Let us show that $G^{(k)}(\zeta)=0$. Because of Galois conjugacy we again know that 
$ G(\zeta^p) = \cdots =G^{(k-1)}(\zeta^p)=0$.
Additionally, since $G_{p}^{(k)}(\zeta)=0$, \cref{eq:recursive_G} tells us that 
\[ G^{(k)}(\zeta) = p^{k+1} \zeta^{k(p-1)} G^{(k)}(\zeta^p) .\]
Since this equality holds for any primitive $d$-th root of unity, a similar argument as in the base case lets us conclude that $G^{(k)}(\zeta)=0$.
\end{proof}

\begin{cor}
\label{cor:relation_Gn}
Suppose that $p\nmid nd$ and $d \nmid n$. Let $\zeta = \zeta_d$ be a primitive $d$-th root of unity. Then 
$\mult_{\zeta}(G_n) = \mult_{\zeta}(G_{np})$.
\end{cor}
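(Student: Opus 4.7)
The recursive identity
\[ G_{np}(x) = \left(\frac{1-x^{np}}{1-x^n}\right)^{2} G_n(x) - p\,G_n(x^p) \]
is awkward for our purposes because the extra factor in front of $G_n(x)$ prevents a direct application of \cref{prop:multiplicity_nonsense}. My strategy is to absorb that factor at the source by working with the rational function
\[ H_n(x) := \frac{G_n(x)}{(x^n-1)^{2}} = \sum_{m \mid n} \mu(m)\, \frac{m x^m}{(x^m-1)^{2}}, \]
which satisfies the much cleaner recursion $H_{np}(x) = H_n(x) - p\,H_n(x^p)$, precisely the shape demanded by \cref{prop:multiplicity_nonsense}.

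To justify this cleaner recursion, I would partition the divisors of $np$ (squarefree, with $\gcd(p,n)=1$) into those dividing $n$ and those of the form $pm$ with $m \mid n$, and then use $\mu(pm) = -\mu(m)$ together with $(x^p)^m = x^{pm}$. This is a direct and routine computation.

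Next I would check that $\zeta := \zeta_d$ is a regular point of both $H_n$ and $H_{np}$, so that \cref{prop:multiplicity_nonsense} is applicable. Writing $H_n = P_n/Q_n$ with $Q_n = \prod_{m \mid n}(x^m-1)^{2}$, I need $\zeta^m \neq 1$ for every $m \mid n$; equivalently, $d \nmid m$ for every such $m$, which holds because $d \nmid n$ by hypothesis. The identical argument for $H_{np}$ uses the extra hypothesis $\gcd(p,d) = 1$, which combined with $d \nmid n$ yields $d \nmid np$.

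With these verifications in place, \cref{prop:multiplicity_nonsense} applied to $G := H_n$ gives $\mult_\zeta(H_n) = \mult_\zeta(H_{np})$. Since $\zeta^n \neq 1$ and $\zeta^{np} \neq 1$, the factors $(x^n-1)^{2}$ and $(x^{np}-1)^{2}$ are nonvanishing at $\zeta$ and contribute no multiplicity, so
\[ \mult_\zeta(G_n) = \mult_\zeta(H_n) = \mult_\zeta(H_{np}) = \mult_\zeta(G_{np}), \]
completing the proof. The only conceptual step is recognizing that the correct vehicle for the recursion is not $G_n$ itself but the rescaled rational function $H_n$; once it is chosen, no step is harder than tracking the factors $x^m-1$.
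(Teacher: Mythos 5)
Your proposal is correct and follows essentially the same route as the paper: the paper likewise passes to the rescaled function $\tilde{G}_n(x) = G_n(x)/(1-x^n)^2$, observes the clean recursion $\tilde{G}_{np}(x) = \tilde{G}_n(x) - p\tilde{G}_n(x^p)$, notes that $(x^n-1)^2$ and $(x^{np}-1)^2$ do not vanish at $\zeta_d$ since $d \nmid np$, and applies \cref{prop:multiplicity_nonsense}. Your write-up is if anything slightly more careful in spelling out why $d \nmid np$ and why $\zeta_d$ is a regular point.
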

\begin{proof} 
Consider the slightly modified function
\[ \tilde{G}_n(x) = \frac{G_n(x)}{(1-x^n)^2}. \]
Since $d \nmid np$, $\zeta=\zeta_{d}$ is not a root of $x^{np}-1$. So $\mult_{\zeta}(G_n) = \mult_{\zeta}(\tilde{G}_n)$ and $\mult_{\zeta}(G_{np}) = \mult_{\zeta}(\tilde{G}_{np})$. Furthermore we have 
$\tilde{G}_{np}(x) = \tilde{G}_n(x) - p \tilde{G}(x^p)$.
Applying \cref{prop:multiplicity_nonsense} to $\tilde{G}_n$ gives the desired statement. 
\end{proof}

When $n=q$ is a prime number, we have 
\[ G_q (x) = (x^q-1)^2\left[\frac{x}{(1-x)^2} - \frac{qx^{q}}{(1-x^q)^2}\right] = xH_q(x), \]
where 
\[ H_q(x) = \left(\frac{1-x^q}{1-x} \right)^2 - qx^{q-1}.\]
Note that $H_2(x)=x^2+1=\Phi_4(x)$.
\begin{lem}
\label{lem:G_q}
    The polynomial $H_q(x+1)$ is an Eisenstein polynomial at the  prime $q.$ Consequently, $H_q(x)$ is irreducible. Moreover, if $q$ is odd, then $\zeta_d$ is not a root of $H_q$ for any $d$, and hence $\zeta_d$ is not a root of $G_q$.
\end{lem}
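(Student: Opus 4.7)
The plan is to verify Eisenstein at $q$ for $H_q(x+1)$ directly, deduce irreducibility, and then use the resulting local-field consequence to rule out roots of unity as roots of $H_q$ when $q$ is odd.

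Writing $A(x) = 1 + x + \cdots + x^{q-1}$ so that $H_q(x) = A(x)^2 - qx^{q-1}$, I set
\[
B(x) := A(x+1) = \frac{(x+1)^q - 1}{x} = \sum_{k=1}^{q} \binom{q}{k}\, x^{k-1},
\]
so that $H_q(x+1) = B(x)^2 - q(x+1)^{q-1}$. The polynomial $B(x)^2$ is monic of degree $2(q-1)$, which strictly exceeds $\deg q(x+1)^{q-1} = q-1$; so $H_q(x+1)$ is monic of degree $2(q-1)$ with leading coefficient $1$. Since $\binom{q}{k} \equiv 0 \pmod q$ for $1 \le k \le q-1$, we have $B(x) \equiv x^{q-1} \pmod q$, whence $B(x)^2 \equiv x^{2(q-1)} \pmod q$, and therefore $H_q(x+1) \equiv x^{2(q-1)} \pmod q$. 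Thus every non-leading coefficient of $H_q(x+1)$ is divisible by $q$. Finally, the constant term is $H_q(1) = q^2 - q = q(q-1)$, divisible by $q$ but not by $q^2$. Hence $H_q(x+1)$ is Eisenstein at $q$, and so $H_q(x+1)$, and therefore $H_q(x)$, is irreducible over $\Q$.

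For the second claim, suppose toward contradiction that $\zeta_d$ is a root of $H_q$ for some positive integer $d$. Since $H_q \in \Z[x]$ is monic and irreducible, it must equal the minimal polynomial $\Phi_d$, forcing $\varphi(d) = 2(q-1)$. The Eisenstein property implies $H_q$ remains irreducible over $\Q_q$ and its roots generate a totally ramified extension of $\Q_q$ of degree $2(q-1)$; in particular $\Q_q(\zeta_d)/\Q_q$ is totally ramified of degree $2(q-1)$. Writing $d = q^a m$ with $\gcd(m, q) = 1$, the standard local description of cyclotomic extensions gives ramification index $\varphi(q^a)$ and residue degree $\mathrm{ord}_m(q)$; total ramification then forces $\varphi(q^a) = 2(q-1)$, i.e.\ $q^{a-1}(q-1) = 2(q-1)$, hence $q^{a-1} = 2$. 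This equation has no solution when $q$ is odd, giving the desired contradiction. Since $G_q(x) = xH_q(x)$ and $\zeta_d \neq 0$, it follows that $\zeta_d$ is not a root of $G_q$ either.

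The main obstacle is the second step, which requires the standard local input that $\Q_q(\zeta_{q^a m})/\Q_q$ has ramification index $\varphi(q^a)$ and residue degree $\mathrm{ord}_m(q)$, combined with the observation that the Eisenstein polynomial forces total ramification. The Eisenstein computation itself is routine once one notices $B(x) \equiv x^{q-1} \pmod q$, and the exclusion of $q = 2$ is essential precisely because $q^{a-1} = 2$ admits the solution $(q,a) = (2,2)$ in that case, matching the known identity $H_2(x) = \Phi_4(x)$.
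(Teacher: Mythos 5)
Your proof is correct. The Eisenstein verification is essentially the paper's: the paper reduces $H_q(x)\equiv (1-x)^{2(q-1)}\pmod q$ and notes $H_q(1)=q^2-q$, while you shift first and observe $A(x+1)\equiv x^{q-1}\pmod q$ via the binomial coefficients; these are the same computation in different coordinates. Where you genuinely diverge is in ruling out cyclotomic roots. The paper stays elementary: from $H_q=\Phi_d$ it evaluates at $x=1$ and uses the classical fact that $\Phi_d(1)$ equals $p$ if $d$ is a power of a prime $p$ and equals $1$ otherwise, so $\Phi_d(1)=q^2-q$ forces $q=2$. You instead exploit the local consequence of the Eisenstein property: the roots of $H_q$ generate a totally ramified degree-$2(q-1)$ extension of $\Q_q$, which is incompatible with the known ramification data of $\Q_q(\zeta_d)/\Q_q$ unless $\varphi(q^a)=2(q-1)$, i.e.\ $q^{a-1}=2$. (One cosmetic point: the identity $\varphi(q^a)=q^{a-1}(q-1)$ presumes $a\ge 1$; the case $a=0$ is excluded separately since a totally ramified extension of degree $2(q-1)>1$ cannot be unramified, or simply because $\varphi(1)=1\ne 2(q-1)$.) The paper's route is shorter and needs nothing beyond $\Phi_d(1)$; yours requires the local theory of cyclotomic extensions but explains structurally \emph{why} the exception $H_2=\Phi_4$ occurs, namely that $q^{a-1}=2$ is solvable exactly when $q=2$, $a=2$, $d=4$.
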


\begin{proof}
Over $\F_q$ we have 
$H_q(x) = (1-x)^{2(q-1)}$.
Additionally 
$H_q(1) = q^2-q$.
This shows that $H_q(x+1)$ is an Eisenstein polynomial at the prime $q$.

Suppose on the contrary that $\zeta_d$ is a root of $H_q.$ Since $\Phi_d(x)$ and $H_q(x)$ are both irreducible, we must have $\Phi_d=H_{q}.$ In particular 
\[ q^2- q = H_q(1) = \Phi_d(1).\]
If $d=p^k$ is a prime power  ($k\geq 1$) then $\Phi_d(1)=p$. In this case, we have $p=q^2-q$, which implies that $q|p$ and hence $q=p=2$. If $d$ is not a prime power then $\Phi_d(1)=1$. But there is no prime $q$ such that $q^2-q=1$.
\end{proof}
With these preparations, we can now describe the multiplicity of any cyclotomic factor $\Phi_d$ of $F_n$.
\begin{thm}
\label{thm:multiplicity_of_Phid}
Let $n>1$ be a squarefree integer. Let $d$ be a positive integer and $\zeta = \zeta_d$ a primitive $d$-th root of unity such that $F_n(\zeta)=0$. Then we have the following. 
\begin{enumerate}
    \item If $d \neq 4$, then $\zeta_d$ is a simple roof of $F_n.$
    \item If $d = 4$ and $n$ is odd,  then $\zeta_d$ is a simple roof of $F_n.$
    \item If $d = 4$ and $n$ is even,  then $\zeta_d$ is a double root of $F_n.$
\end{enumerate}
\end{thm}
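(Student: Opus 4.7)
The plan is to translate the multiplicity question about $F_n$ into one about $G_n$ using \cref{eq:f_and_g}, and then compute $\mult_{\zeta_d}(G_n)$ by iteratively reducing $n$ via \cref{cor:relation_Gn} until reaching terminal cases handled by \cref{lem:G_q} and \cref{lem:big_d}.

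First, since $F_n(\zeta_d) = 0$, \cref{cor:d_divisor_n} forces $d \nmid n$, and hence $\zeta_d(\zeta_d^n - 1) \neq 0$. Setting $m = \mult_{\zeta_d}(F_n) \geq 1$ and $k = \mult_{\zeta_d}(G_n)$, the left-hand side of $x(x^n - 1)F'_n(x) = nx^n F_n(x) - G_n(x)$ has multiplicity $m-1$ at $\zeta_d$, while the right-hand side has multiplicity $\min(m,k)$ if $m\neq k$ and at least $m$ if $m=k$. The cases $m \leq k$ both lead to $m-1 \geq m$, a contradiction, so $k = m-1$, i.e., $\mult_{\zeta_d}(F_n) = \mult_{\zeta_d}(G_n) + 1$. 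Thus it remains to show $\mult_{\zeta_d}(G_n) = 0$ in cases (1) and (2), and $\mult_{\zeta_d}(G_n) = 1$ in case (3).

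To compute $\mult_{\zeta_d}(G_n)$, I would apply \cref{cor:relation_Gn} iteratively: whenever $n$ is composite and has a prime factor $q$ with $q \nmid d$, replace $n$ by $n' = n/q$. This preserves the multiplicity, and the condition $d \nmid n'$ is automatic since $d \mid n'$ would imply $d \mid n$. The procedure terminates when either (i) $n'$ is prime, or (ii) every prime of $n'$ divides $d$, so that $n' \mid d$. In case (i), \cref{lem:G_q} gives $\mult_{\zeta_d}(G_p) = 0$ for odd primes $p$, while a direct computation yields $G_2(x) = x(x^2+1) = x\Phi_4(x)$, so $\mult_{\zeta_d}(G_2) = 1$ if $d = 4$ and $0$ otherwise. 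In case (ii), $n' \mid d$ together with $d \nmid n'$ forces $d \geq 2n'$, so \cref{lem:big_d} yields $\mult_{\zeta_d}(G_{n'}) = 0$, with the single exception $n' = 2, d = 4$.

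The three cases of the theorem follow. For (1) $d \neq 4$: neither exception above triggers, so $\mult_{\zeta_d}(G_n) = 0$ and $\mult_{\zeta_d}(F_n) = 1$. For (2) $d = 4$ and $n$ odd: every prime of $n$ is odd and thus removable, so the reduction lands in case (i) at an odd prime, giving $\mult_i(G_n) = 0$ and $\mult_i(F_n) = 1$. For (3) $d = 4$ and $n$ even: write $n = 2m$ with $m$ odd squarefree; every odd prime of $n$ is removable (each intermediate $n'$ remains even and squarefree, hence $4 \nmid n'$), yielding terminal value $n' = 2$, so $\mult_i(G_n) = \mult_i(G_2) = 1$ and $\mult_i(F_n) = 2$. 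The only delicate point is verifying the hypotheses of \cref{cor:relation_Gn} at each reduction step, but this reduces to the preservation of $d \nmid n$, which is immediate.
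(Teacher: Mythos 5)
Your proof is correct and follows essentially the same route as the paper: translate multiplicity of $F_n$ at $\zeta_d$ into multiplicity of $G_n$ via \cref{eq:f_and_g}, strip away the primes of $n$ coprime to $d$ using \cref{cor:relation_Gn}, and finish with \cref{lem:big_d} and \cref{lem:G_q}. Your explicit identity $\mult_{\zeta_d}(F_n)=\mult_{\zeta_d}(G_n)+1$ is a slightly cleaner packaging than the paper's contrapositive argument, but the substance is identical.
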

\begin{proof} 
We will prove the first and second statements by showing the contrapositive. Suppose that $\zeta = \zeta_d$ is a repeated root of $F_n.$ Then we know that $d \nmid n$ and that $F_n(\zeta)= G_n(\zeta)=0.$ Let $d_1 = \gcd(n,d).$ Let us write $n= d_1 n_1$ where $\gcd(n_1, d)=1.$ Since $n$ is a squarefree integer, this also implies that $\gcd(n_1, d_1 d)=1.$ Since $G_n(\zeta) = G_{d_1 n_1}(\zeta)=0$, \cref{cor:relation_Gn} implies that $G_{d_1}(\zeta)=0$ as well. Since $d \geq 2d_1$ we conclude by \cref{lem:big_d} that $d=4$ and $d_1=2$ meaning that $n$ is even.

When $d=4$ and $n$ is even, we know by \cref{cor:relation_Gn} that 
$\mult_{\zeta_4}(G_{n}) = \mult_{\zeta_4}(G_2) =1$.
\cref{eq:f_and_g} then shows that $F_n(\zeta_4) = F_{n}'(\zeta_4)=0$ but $F_{n}''(\zeta_4) \neq 0.$ This proves that $\zeta_4$ is a double root of $F_n$.
\end{proof}
\begin{rem}
It is not true that $\mult_{\zeta_4}(F_n)=2$ for all even squarefree integers $n$. For instance, \cref{expl:2p} says that for odd primes $p$, $F_{2p}(\zeta_4) = 0$ if and only if $p \equiv 1 \pmod{4}$. \cref{thm:multiplicity_of_Phid} only asserts $\mult_{\zeta_4}(F_n)=2$ on the condition that $\zeta_4$ is a root of $F_n$. In the next section we will classify all $n$ such that $\zeta_4$ is a root of $F_n$. 
\end{rem}

\subsection{Cyclotomic factors of $F_n$ with small degree}
\label{subsec:small_deg_factors}

In this section, we give explicit descriptions of the squarefree integers $n$ such that $\Phi_d$ is a factor of $F_n$, for each $d \leq 7$. The cases $d=2, 3, 5, 7$ are already dealt with by \cref{cor:prime_divisor}. We will show that the same statement also holds for $d = 4, 6$.
\begin{rem}
This statement fails for $d = 8$, with the smallest example coming from $n = 15$. By \cref{thm:cyclotomic_factor_3}, we know that $\Phi_8$ is a factor of $F_{n}$, but $n$ has no prime factor congruent to $1 \pmod 8$.
\end{rem}

In order to simplify our calculations, we consider the modified function $\tilde{F}_n(x) = \frac{F_n(x)}{1-x^n}$.
The value $\tilde{F}_n(\zeta_d)$ is well-defined as long as $d \nmid n$, and furthermore it is zero if and only if $F_n(\zeta_d)=0.$ We also recall the recursive formula $\tilde{F}_{np}(x) = \tilde{F}_n(x) - \tilde{F}_{n}(x^p)$ for $p \nmid n$ from \cref{prop:recursive_F}, which will be used repeatedly in this section. 

We now consider the case $d=4$. By \cref{cor:d_dividing_pminus1}, if $n$ has a prime factor $p \equiv 1 \pmod{4}$ then $F_n(\zeta_4)=0.$ It turns out that the converse is true as well.

\begin{prop}
Suppose that $n=2^s p_1 p_2 \ldots p_r$ where $s \in \{0,1 \}$ and $p_1, \ldots, p_r$ are distint odd primes of the form $4k+3$. Then 
$ \tilde{F}_{n} (\zeta_4) = 2^{r-1} \zeta_4$.
In particular, $F_n(\zeta_4) \neq 0.$
\end{prop}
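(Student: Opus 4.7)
The plan is to evaluate $\tilde{F}_n(\zeta_4)$ directly using the identity
\[
\tilde{F}_n(x) = \sum_{m \mid n} \mu(m)\, \frac{x^m}{1-x^m}
\]
from \cref{prop:form_of_F_n}. Since $s \in \{0,1\}$ and the $p_i$ are odd, we have $4 \nmid n$, hence $4 \nmid m$ for every divisor $m$ of $n$, and every term is well defined at $x = \zeta_4$. The crucial observation is that $\zeta_4^m/(1-\zeta_4^m)$ depends only on $m \bmod 4$, taking the three values $(-1+\zeta_4)/2$, $-1/2$, and $(-1-\zeta_4)/2$ according to whether $m \equiv 1, 2, 3 \pmod 4$.

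Next, I would parametrise the divisors as $m = 2^a \prod_{i\in I} p_i$ with $a \in \{0,\dots,s\}$ and $I \subseteq \{1,\dots,r\}$, so that $\mu(m) = (-1)^{a+|I|}$. Since each $p_i \equiv -1 \pmod 4$, I get $\prod_{i \in I} p_i \equiv (-1)^{|I|} \pmod 4$; so for $a=0$ the residue of $m$ mod $4$ is $1$ or $3$ according to the parity of $|I|$, and for $a=1$ we always have $m \equiv 2 \pmod 4$. I then split the sum into the $a=0$ and (if $s=1$) the $a=1$ parts. The $a=0$ contribution becomes
\[
\sum_{|I|\text{ even}} \frac{-1+\zeta_4}{2} \;+\; \sum_{|I|\text{ odd}} \frac{1+\zeta_4}{2},
\]
and since for $r \geq 1$ both sums have $2^{r-1}$ terms, this collapses to $2^{r-1} \cdot \zeta_4$.

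For the $a=1$ part, $\mu(m) = (-1)^{|I|+1}$ and every term equals $-\tfrac12$, so the contribution is proportional to $\sum_{I \subseteq [r]} (-1)^{|I|}$, which vanishes for $r \geq 1$. (A quick separate check handles the edge case $r=0, s=1$ where $n=2$: one verifies directly that $\tilde{F}_2(\zeta_4) = \zeta_4/2 = 2^{r-1}\zeta_4$.) Combining everything yields $\tilde{F}_n(\zeta_4) = 2^{r-1}\zeta_4$, which is nonzero; since the denominator $1-\zeta_4^n$ is nonzero ($4 \nmid n$), this also gives $F_n(\zeta_4) \neq 0$.

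There is no serious obstacle here; the whole argument is an exercise in careful bookkeeping of residues mod $4$, signs of the M\"obius function, and the standard identity $\sum_{I \subseteq [r]}(-1)^{|I|}=0$ for $r \geq 1$. The only minor subtlety to watch is to correctly match the parity of $|I|$ with the residue of $m$ modulo $4$ so that the $a=0$ contribution lines up to give exactly $\zeta_4$ rather than some other unit multiple.
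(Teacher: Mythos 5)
Your argument is correct, but it takes a genuinely different route from the paper. The paper proves this by induction on the number of odd prime factors: it first strips off the factor $2$ using the recursion $\tilde{F}_{2m}(\zeta_4)=\tilde{F}_m(\zeta_4)-\tilde{F}_m(-1)=\tilde{F}_m(\zeta_4)$, establishes the base case $n=p$ by direct computation, and then uses $\tilde{F}_{np_1}(\zeta_4)=\tilde{F}_m(\zeta_4)-\tilde{F}_m(\zeta_4^{3})$ together with Galois conjugation to double the value at each step. You instead evaluate the closed-form M\"obius sum of \cref{prop:form_of_F_n} at $\zeta_4$ directly, sorting the divisors $m=2^a\prod_{i\in I}p_i$ by their residue mod $4$ and using that $p_i\equiv -1\pmod 4$ forces $m\equiv(-1)^{|I|}\pmod 4$ when $a=0$; the count $2^{r-1}$ of even-size versus odd-size subsets produces the factor $2^{r-1}$, and the $a=1$ terms cancel by $\sum_I(-1)^{|I|}=0$. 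I checked the three values of $\zeta_4^m/(1-\zeta_4^m)$ and the resulting collapse to $2^{r-1}\zeta_4$; the bookkeeping is right, and your separate treatment of $n=2$ closes the one edge case. What each approach buys: the paper's induction template transfers verbatim to the analogous computations at $\zeta_3$ and $\zeta_6$ in the rest of that subsection, whereas your direct evaluation is non-inductive, makes the source of the $2^{r-1}$ combinatorially transparent, and is closer in spirit to the divisor-pairing argument of \cref{thm:cyclotomic_factor_3}. Either is acceptable; both ultimately rest on \cref{prop:recursive_F}, since \cref{prop:form_of_F_n} is derived from it.
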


\begin{proof}
Let us write $\zeta=\zeta_4$ for simplicity. If $s=1$ and $m = p_1 p_2 \ldots p_r$ we get using the recursive formula that
\[ \tilde{F}_{n}(\zeta) = \tilde{F}_{m}(\zeta) - \tilde{F}_m(\zeta^2) = \tilde{F}_m(\zeta)-\tilde{F}_m(-1) = \tilde{F}_m(\zeta),\]
since $F_m(-1)=0$ by \cref{cor:d_dividing_pminus1}. So it is sufficient to prove the statement for odd $n$. We will do this by induction on the number of prime factors of $n$.

When $n = p$ is a prime of the form $4k+3$, we have $\tilde{F_n}(\zeta) 
= \frac{1}{1-\zeta^p} \frac{\zeta(1- \zeta^{p-1})}{1-\zeta} = \zeta$. Suppose that the formula is true when $n$ is a product of $r$ primes of the form $4k+3$. Let us now suppose that $n=p_1 p_2 \ldots p_{r+1}$. Let $m=p_2  \ldots p_{r+1}$. By the induction hypothesis, we know that $\tilde{F}_m(\zeta)=2^{r-1} \zeta$. By Galois conjugation, we get that $\tilde{F}_m(\zeta^3) = 2^{r-1}\zeta^3 = -2^{r-1}\zeta$. Now applying the recursive formula gives us the desired statement.  
\[ \tilde{F}_n(\zeta) = \tilde{F}_m(\zeta)- \tilde{F}_{m}(\zeta^{p_1}) =\tilde{F}_m(\zeta)- \tilde{F}_{m}(\zeta^{3}) = 2^r \zeta. \]
This finishes the proof by induction.
\end{proof}

For $d=6$, \cref{cor:d_dividing_pminus1} tells us that if $6 \nmid n$ and $n$ has a prime factor $p \equiv 1 \pmod{6}$, then $F_n(\zeta_6)=0$. It again turns out that the converse is true. We first need a lemma.

\begin{lem} \label{lem:root_zeta_3}
    Suppose that $n=p_1 p_2 \ldots p_r$ for distinct primes $p_1, \ldots, p_r$ of the form $3k+2$. Then $\tilde{F}_{n} (\zeta_3) = 2^{r-1} (1+2\zeta_3)/3$.
\end{lem}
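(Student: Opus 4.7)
The plan is to proceed by induction on $r$, mirroring the proof of the preceding proposition (the $d=4$ case). The key algebraic facts are: (i) every prime $p$ of the form $3k+2$ satisfies $p \equiv 2 \pmod 3$, so $\zeta_3^p = \zeta_3^2 = \overline{\zeta_3}$; (ii) since $\tilde{F}_m$ has rational coefficients, $\tilde{F}_m(\zeta_3^2) = \overline{\tilde{F}_m(\zeta_3)}$; and (iii) using $1 + \zeta_3 + \zeta_3^2 = 0$ we have the identity $\zeta_3 - \zeta_3^2 = 1 + 2\zeta_3$.

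For the base case $r=1$, with $n = p \equiv 2 \pmod 3$, I would directly use the closed form $F_p(x) = x(1-x^{p-1})/(1-x)$ from \cref{expl:p_and_pk} to write
\[
\tilde{F}_p(\zeta_3) \;=\; \frac{\zeta_3(1-\zeta_3^{p-1})}{(1-\zeta_3)(1-\zeta_3^p)} \;=\; \frac{\zeta_3(1-\zeta_3)}{(1-\zeta_3)(1-\zeta_3^2)} \;=\; \frac{\zeta_3}{2+\zeta_3},
\]
and then verify that this equals $(1+2\zeta_3)/3$ by the short calculation $(1+2\zeta_3)(2+\zeta_3) = 2 + 5\zeta_3 + 2\zeta_3^2 = 3\zeta_3$, again invoking $\zeta_3^2 = -1-\zeta_3$.

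For the inductive step, assume the formula for a product $m = p_2 \cdots p_{r+1}$ of $r$ primes $\equiv 2 \pmod 3$, and let $n = p_1 m$. Apply the recursion of \cref{prop:recursive_F} in the form $\tilde{F}_n(x) = \tilde{F}_m(x) - \tilde{F}_m(x^{p_1})$ and evaluate at $\zeta_3$. Since $\zeta_3^{p_1} = \zeta_3^2$ is the Galois conjugate of $\zeta_3$, the inductive hypothesis yields
\[
\tilde{F}_n(\zeta_3) \;=\; 2^{r-1}\cdot\frac{1+2\zeta_3}{3} \;-\; 2^{r-1}\cdot\frac{1+2\zeta_3^2}{3} \;=\; \frac{2^r(\zeta_3-\zeta_3^2)}{3} \;=\; 2^r \cdot \frac{1+2\zeta_3}{3},
\]
which is exactly the claimed formula for $r+1$ prime factors, completing the induction.

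There is no serious obstacle here: the argument is essentially parallel to the $\zeta_4$ case handled in the preceding proposition, and all of the auxiliary ingredients (the recursion from \cref{prop:recursive_F}, Galois conjugacy, and the cyclotomic relation $1+\zeta_3+\zeta_3^2 = 0$) are in place. The only thing requiring mild care is the base case identification $\zeta_3/(2+\zeta_3) = (1+2\zeta_3)/3$, which is a short direct manipulation.
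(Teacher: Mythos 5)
Your proof is correct and follows essentially the same route as the paper's: induction on the number of prime factors, with the base case computed from the closed form of $F_p$ and the inductive step combining the recursion of \cref{prop:recursive_F} with Galois conjugacy of $\zeta_3$ and $\zeta_3^2$. The only difference is cosmetic: you spell out the simplification $\zeta_3/(2+\zeta_3) = (1+2\zeta_3)/3$, which the paper leaves as a ``direct calculation.''
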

\begin{proof}
When $r=1$, direct calculation gives
$\tilde{F}_{p_1}(\zeta_3) =\frac{1}{1-\zeta_3^{p_1}} \frac{\zeta_3 (1-\zeta_3^{p_1-1})}{1-\zeta_3} = (1+2\zeta_3)/3$.
In the general case, we let $m=p_2 \ldots p_r$ and use the recursive formula 
\[ \tilde{F}_n(\zeta_3)= \tilde{F}_{mp_1}(\zeta_3) = \tilde{F}_m(\zeta_3) - \tilde{F}_m(\zeta_3^{p_1}) = \tilde{F}_m(\zeta_3) - \tilde{F}_m(\zeta_3^2).\] 
Now the induction hypothesis $\tilde{F}_m(\zeta_3) = 2^{r-2}(1+2\zeta_3)/3$ and Galois conjugacy implies that $\tilde{F}_n(\zeta_3) = \frac{2^{r-2}}{3} \left((1+2\zeta_3) - (1+2\zeta_3^2) \right) = 2^{r-1} (1+2\zeta_3)/3$.
\end{proof}

\begin{prop}
    $F_n(\zeta_6)=0$ if and only if $6 \nmid n$ and there exists a prime divisor $p$ of $n$ such that $p \equiv 1 \pmod{6}.$
\end{prop}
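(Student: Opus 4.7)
The forward direction is essentially immediate. If $6 \nmid n$ and some prime divisor $p$ of $n$ satisfies $p\equiv 1\pmod{6}$, then $6\mid p-1$ and \cref{cor:d_dividing_pminus1} with $d=6$ gives $\Phi_6 \mid F_n$, so $F_n(\zeta_6)=0$. For the converse I would argue by contrapositive. Assume $F_n(\zeta_6)=0$. By \cref{prop:squarefree_condition} I may pass to the radical and assume $n$ is squarefree. If $6 \mid n$ then \cref{cor:d_divisor_n} forces $F_n(\zeta_6)\neq 0$, contradicting the hypothesis, so $6\nmid n$. Thus it suffices to rule out the case where every prime divisor of $n$ lies in $\{2,3\}\cup\{p : p\equiv 5\pmod 6\}$ and $6\nmid n$, i.e.\ $n = 2^{a}3^{b}m$ with $a,b\in\{0,1\}$ not both $1$ and $m = p_1\cdots p_r$ a product of distinct primes each $\equiv 5\pmod 6$. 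I will compute $\tilde F_n(\zeta_6) = F_n(\zeta_6)/(1-\zeta_6^n)$ and show it is nonzero.

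The key step is the identity
\[
\tilde F_m(\zeta_6) = 2^{r-1}(2\zeta_6-1),
\]
which I would prove by induction on $r$ in the same spirit as \cref{lem:root_zeta_3}. For the base case $r=1$, a direct evaluation of $\tilde F_p(x) = x(1-x^{p-1})/((1-x)(1-x^p))$ at $x=\zeta_6$, together with the minimal-polynomial relation $\zeta_6+\zeta_6^{-1}=1$ (so that $(1-\zeta_6)(1-\zeta_6^{-1})=1$ and $\zeta_6-\zeta_6^{-1}=2\zeta_6-1$), yields $\tilde F_p(\zeta_6) = 2\zeta_6-1$ whenever $p\equiv 5\equiv -1\pmod 6$. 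For the inductive step, I apply the recursive formula of \cref{prop:recursive_F}: writing $m = m'p_{r+1}$ with $p_{r+1}\equiv -1\pmod 6$, I get $\tilde F_m(\zeta_6) = \tilde F_{m'}(\zeta_6) - \tilde F_{m'}(\zeta_6^{-1})$, and since $\overline{2\zeta_6-1} = 2\zeta_6^{-1}-1 = -(2\zeta_6-1)$ the two summands add rather than cancel, doubling the coefficient.

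It then remains to handle the three choices of $(a,b)$. When $(a,b)=(0,0)$ the displayed identity already gives $\tilde F_n(\zeta_6)\neq 0$. When $(a,b)=(1,0)$, the recursive formula gives
\[
\tilde F_{2m}(\zeta_6) = \tilde F_m(\zeta_6) - \tilde F_m(\zeta_6^2) = \tilde F_m(\zeta_6) - \tilde F_m(\zeta_3),
\]
and combining the key identity with \cref{lem:root_zeta_3} together with the substitution $\zeta_3 = \zeta_6^2 = \zeta_6-1$, which gives $(1+2\zeta_3)/3 = (2\zeta_6-1)/3$, one obtains $\tilde F_{2m}(\zeta_6) = 2^{r-1}(2\zeta_6-1)\bigl(1 - \tfrac13\bigr) = 2^r(2\zeta_6-1)/3 \neq 0$. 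Finally, when $(a,b)=(0,1)$, the recursive formula gives $\tilde F_{3m}(\zeta_6) = \tilde F_m(\zeta_6) - \tilde F_m(-1)$; since $m$ is odd the involution $a\mapsto m-a$ on residues coprime to $m$ pairs even with odd integers, so $F_m(-1)=0$ and hence $\tilde F_m(-1)=0$, leaving $\tilde F_{3m}(\zeta_6) = 2^{r-1}(2\zeta_6-1)\neq 0$.

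The bulk of the proof is routine manipulation of the recursive formula; the one piece that requires a small trick is the inductive identity for $\tilde F_m(\zeta_6)$, where I must recognise that $2\zeta_6-1$ is anti-invariant under the Galois substitution $\zeta_6 \mapsto \zeta_6^{-1}$, so that successive applications of the recursive formula produce factors of $2$ rather than telescoping to zero. Everything else reduces to the direct evaluations above and the already-established \cref{lem:root_zeta_3}, \cref{prop:squarefree_condition}, \cref{cor:d_divisor_n}, and \cref{cor:d_dividing_pminus1}.
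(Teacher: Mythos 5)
Your proposal is correct and follows essentially the same route as the paper: the forward direction via \cref{cor:d_dividing_pminus1}, and for the converse the same three-case split according to $\gcd(n,6)$, with the same inductive identity $\tilde F_m(\zeta_6)=2^{r-1}(2\zeta_6-1)$ for $m$ a product of primes $\equiv 5\pmod 6$ and the same use of \cref{lem:root_zeta_3} in the case $2\mid n$. The only differences are cosmetic (you make the squarefree reduction and the nonvanishing in the $2m$ case explicit, and justify $F_m(-1)=0$ by the pairing $a\mapsto m-a$ rather than by citing \cref{cor:d_dividing_pminus1}).
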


\begin{proof}
The ``if'' part follows from \cref{cor:d_dividing_pminus1}. So we focus on the ``only if'' part. By \cref{cor:d_divisor_n} we know that if $6 \mid n$ then $F_n(\zeta_6) \neq 0$. So let us assume that $6 \nmid n$. Suppose that $n$ has no prime factor $p \equiv 1 \pmod{6}$. We consider three cases according to whether $\gcd(n,6)=1$, $2$ or $3$.

In the first case, $n = p_1 p_2 \ldots p_r$ for distinct primes $p_i \equiv 5 \pmod 6$, and we can show by induction that 
$\tilde{F}_n(\zeta_6) = 2^{r-1} (2\zeta_6-1) \neq 0$.

In the second case, we write $n=2m = 2p_1 p_2 \ldots p_r$ for distinct primes $p_i \equiv 5 \pmod 6$. The recursive formula then tells us that
\[ \tilde{F}_n(\zeta_6) = \tilde{F}_m(\zeta_6) - \tilde{F}_m(\zeta_6^2) = 2^{r-1} (2\zeta_6 - 1) - 2^{r-1} \frac{1+2\zeta_6^2}{3} \neq 0, \]
where we have used the formula from the first case, and \cref{lem:root_zeta_3}.

In the last case, we write $n=3m = 3p_1 p_2 \ldots p_r$ for distinct primes $p_i \equiv 5 \pmod 6$ and get
\[ \tilde{F}_n(\zeta_6) = \tilde{F}_m(\zeta_6) - \tilde{F}_m(\zeta_6^3) =  \tilde{F}_m(\zeta_6) - \tilde{F}_m(-1) = \tilde{F}_m(\zeta_6) = 2^{r-1} (2\zeta_6 - 1) \neq 0. \]
We conclude that in all cases $\tilde{F}_n(\zeta_6) \neq 0.$ This completes the proof. 
\end{proof}


\section{The case $n = pq$}
\label{sec:pq}

In this section, we study more closely the case where $n$ is a product of two odd primes $p > q$. The following proposition is a direct consequence of \cref{thm:cyclotomic_factor_3}. If we assume that \cref{conj:equidistribution} holds, this proposition in fact captures \emph{all} cyclotomic factors of $F_{pq}$.
\begin{prop}\label{prop:cyclotomic_factors_pq}
Let $n$ be a product of two odd primes $p > q$. Suppose $d > 1$ and one of the following holds:
\begin{enumerate}[(a)]
    \item $d$ divides $q-1$;
    \item $d$ divides $p-1$ and $d \neq q$;
    \item $d$ divides $\gcd(qp+1,p+q)$. \qed
\end{enumerate}
Then the $d$-th cyclotomic polynomial $\Phi_d$ divides $F_n$.
\end{prop}

\begin{thm}
\label{thm:cyclotomic_factors_pq1}
   Assume \cref{conj:equidistribution} is true. Then \cref{prop:cyclotomic_factors_pq} is a complete characterization of all cyclotomic factors of $F_n$, when $n = pq$ is a product of two odd primes $p > q$.
\end{thm}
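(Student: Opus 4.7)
The forward direction is given by Proposition 3.1. For the converse, my plan is to argue by strong induction on $d$: assuming $\Phi_d \mid F_n$ with $d > 1$ and $d \nmid n = pq$, I show $d$ satisfies one of (a), (b), (c). Conjecture~\ref{conj:equidistribution}, combined with Remark~\ref{rem:equivalence_equidistribution}, Proposition~\ref{prop:value_at_zeta_d}, and Lemma~\ref{lem:F_S}, translates equidistribution modulo $d$ into the statement that $\Phi_m \mid F_n$ for every $m \mid d$ with $m \nmid n$. Hence every proper such $m$ satisfies one of (a), (b), (c) by the inductive hypothesis.

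The next step is a case-split on $d_1 := \gcd(d, n) \in \{1, p, q, pq\}$. The case $d_1 = pq$ is immediate: the equidistribution constant $c = \varphi(n) d_1/(d \varphi(d_1)) = pq/d$ is strictly less than $1$, contradicting its being a positive integer. For $d_1 = p$, write $d = p^a e$ with $\gcd(e, pq) = 1$; integrality of $c = (q-1)/(p^{a-1} e)$ combined with $p > q-1$ forces $a = 1$ and $1 < e \mid q-1$. Taking any prime $\ell \mid e$, the divisor $p\ell$ is proper and coprime to $n$, yet each of (a), (b), (c) fails for $p\ell$ (from $p\ell > p-1 > q-1 \geq \ell$ and $p \mid p\ell$ versus $p \nmid pq+1$), contradicting the inductive hypothesis. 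For $d_1 = q$, an analogous inductive argument over divisors of $d$ containing $q$ (where (a) and (c) are ruled out since $q \mid m$) forces $d \mid p-1$, which is case (b); the prime-power sub-case $d = q^a$ with $a \geq 2$ is handled by direct equidistribution analysis of the multiples of $p$ in $[1, pq]$ modulo $q^a$, which forces $p \equiv 1 \pmod{q^a}$, while the sub-case $d = q\ell$ for $\ell$ prime coprime to $n$ requires a direct computation of $F_n(\zeta_{q\ell})$ combined with the inductive constraint on $\ell$ to derive either (b) or a contradiction.

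The main case is $d_1 = 1$, where $d$ is coprime to $n$. Setting $u = \zeta_d$, $v = u^p$, $w = u^q$, $z = u^{pq}$, Proposition~\ref{prop:form_of_F_n} rewrites $F_n(\zeta_d) = 0$ as the identity
\[
(1 - vw)(u + z - 2) = (1 - uz)(v + w - 2).
\]
Case (a) corresponds to $u = w$, case (b) to $u = v$, and case (c) to $uz = vw = 1$; all three satisfy this identity. To show these are the only possibilities, assume $d$ does not fall into (a) or (b). If exactly one of $uz, vw$ equals $1$, the identity forces $u + z = 2$ or $v + w = 2$, impossible since $|u| = |v| = |w| = |z| = 1$ and $d > 1$. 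The remaining sub-case --- neither $uz$ nor $vw$ equals $1$ --- is where I expect the main obstacle. The plan is to leverage the structural constraint from the inductive hypothesis (each prime-power divisor $\ell^k$ of $d$ lies in (a), (b), or (c)) together with the algebraic identity above to rule out any $d$ whose factorization ``mixes'' prime powers from the three classes without fitting into one of them. This delicate combinatorial and algebraic step is the crux of the proof.
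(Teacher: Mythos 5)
Your proposal has genuine gaps and does not yet constitute a proof. The most serious one is in the case $d_1=1$, which is the generic case and the heart of the theorem: you correctly derive the identity $(1-vw)(u+z-2)=(1-uz)(v+w-2)$ with $u=\zeta_d$, $v=u^p$, $w=u^q$, $z=u^{pq}$ (the algebra checks out against \cref{prop:form_of_F_n}), and you correctly dispose of the sub-case where exactly one of $uz,vw$ equals $1$; but the remaining sub-case --- neither $uz$ nor $vw$ equal to $1$ and $u\neq v$, $u\neq w$ --- is exactly where all the content lies, and you explicitly leave it as a ``plan''. The identity is just a restatement of $F_n(\zeta_d)=0$, so no information has been gained, and it is far from clear that the inductive constraint on prime-power divisors of $d$ suffices to close it. The paper instead works combinatorially: it reduces $(x^p-1)(x^q-1)F_n$ modulo $x^{\varphi(n)}-1$ (legitimate because \cref{conj:equidistribution} forces $d\mid\varphi(n)$ in this case) and writes the result as $\sum_{i\in S_1}x^i-\sum_{i\in S_2}x^i$ where $S_2$ is a block of $2q$ consecutive integers; vanishing at $\zeta_d$ then pins down $d\mid p-1$, $d\mid q-1$ or $d\mid p+q$, and $d\mid p+q$ yields $d\mid pq+1$ since $pq+1=\varphi(n)+(p+q)$. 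You would need either to import such a computation or to actually carry out the ``mixing'' argument you defer.

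Second, your induction has unhandled base cases in the branches $d_1=p$ and $d_1=q$. In the branch $d_1=p$ you correctly force $d=pe$ with $1<e\mid q-1$, but when $e=\ell$ is itself prime the divisor $p\ell$ to which you apply the inductive hypothesis equals $d$, not a proper divisor, so the argument is circular there; one needs a direct verification that $F_n(\zeta_{p\ell})\neq 0$ (the paper gets this from the reduction to $(x^p-1)^2\,x(x^{q-1}-1)/(x-1)$ modulo $x^{p(q-1)}-1$, whose value at $\zeta_d$ is visibly nonzero because $p\mid d$ and $p>q-1$). You acknowledge but do not carry out the analogous direct computations in the branch $d_1=q$. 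On the positive side: your observation that \cref{conj:equidistribution} together with \cref{lem:F_S} propagates $\Phi_d\mid F_n$ to $\Phi_m\mid F_n$ for every $m\mid d$ with $m\nmid n$ is correct and does not appear in the paper, and you treat the case $d_1=pq$ (via $c=pq/d<1$), which the paper's case analysis silently omits.
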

\begin{proof}
    We have
    \begin{align}\label{eqn:Fpq_expression}
    \begin{split}
    (x^p-1)(x^q-1)F_n(x) =(x^p-1)\sum_{1 \leq i \leq q-1}\left(x^{qp+i}+x^{ip}-x^i-x^{ip+q}\right)\\
    =x^{qp} - x^p - x^{qp+q} + x^{p+q} +
    \sum_{1 \leq i \leq q-1} \left(x^{(q+1)p+i}-x^{qp+i}-x^{p+i}+x^i\right).
    \end{split}
    \end{align}
    To evaluate a polynomial at a primitive $d$-th root of unity $\zeta_d$, it is enough to consider its reduction modulo $x^d-1$. In particular, we may reduce the monomial exponents modulo $d$.
    
    By \cref{cor:d_divisor_n} we know that $d \nmid n$. Suppose $d = d_1 e_1$ with $d_1 = \gcd(d,n)$ and $e_1 > 1$. We have the following cases depending on whether $d_1$ is $p, q$ or $1$.\\
    \textbf{Case 1:} Suppose $d_1 = p$. The equidistribution from \cref{conj:equidistribution} implies that $\#S = \varphi(d_1)e_1 = \varphi(p)e_1$ divides $\varphi(n)=\varphi(p)\varphi(q)$. Hence $e_1 | \varphi(q)$ and $d | p\varphi(q)$. Reducing \cref{eqn:Fpq_expression} modulo $x^{p\varphi(q)}-1$ without loss of generality, we get
    \begin{align*}
        \sum_{1 \leq i \leq q-1} \left(x^{2p+i}-2x^{p+i}+x^i\right) = (x^p-1)^2\sum_{1 \leq i \leq q-1} x^i = (x^p-1)^2 \frac{x(x^{q-1}-1)}{x-1},
    \end{align*}
    whose value at $\zeta_d$ is clearly non-zero.\\
    \textbf{Case 2:} Suppose $d_1 = q$. \cref{conj:equidistribution} implies that $\#S = \varphi(d_1)e_1$ divides $\varphi(n)$. Hence $e_1 | \varphi(p)$ and $d | q\varphi(p)$. Reducing \cref{eqn:Fpq_expression} modulo $x^{q\varphi(p)}-1$, we get
    \begin{align*}
        \sum_{1 \leq i \leq q} \left(x^{p+q-1+i}-x^{q+i}-x^{p-1+i}+x^i\right) &= (x^{p+q-1}-x^q-x^{p-1}+1) \frac{x(x^q-1)}{x-1}\\
        &= (x^{p-1}-1)\frac{x(x^q-1)^2}{x-1},
    \end{align*}
    whose value at $\zeta_d$ is zero if and only $d | p-1$ and $d \neq q$.\\
    \textbf{Case 3:} Suppose $d_1 = 1$. \cref{conj:equidistribution} implies that $\#S = e_1 = d$ divides $\varphi(n)$. Reducing \cref{eqn:Fpq_expression} modulo $x^{\varphi(n)}-1$, we get
    \begin{align*}
        & x^{p+q-1} - x^p - x^{p+2q-1} + x^{p+q} + \sum_{1 \leq i \leq q-1} \left(x^{2p+q+i-1}-x^{p+q+i-1}-x^{p+i}+x^i\right)\\
        &= \sum_{i \in S_1} x^i - \sum_{i \in S_2} x^i,
    \end{align*}
    where $S_1 = \{ 1 \leq i \leq q-1 \} \cup \{p+q-1,p+q\} \cup \{2p+q \leq i \leq 2p+2q-2\}$ and $S_2 = \{p \leq i \leq p+2q-1\}$. Note that $S_2$ is a sequence of $2q$ consecutive integers. Therefore, this polynomial evaluated at $\zeta_d$ is zero if and only if $S_1$ modulo $d$ is equal to the same set of consecutive residues modulo $d$. This happens if and only if $d$ divides $p-1$, $q-1$, or $p+q$. If $d$ divides $p+q$, then $d$ also divides $pq+1$ because $pq+1 = \varphi(n)+(p+q)$.
\end{proof}

Let $S_n$ be the set of integers $d$ described in \cref{prop:cyclotomic_factors_pq}, namely 
\begin{equation}
\label{eq:S_n}
S_n  = \{d>1, d\not=q, d \mid p-1 \} \cup \{d>1,  d \mid q-1 \} \cup \{d>1, d\mid \gcd(pq+1,p+q) \}.
\end{equation}
\begin{definition}
\label{defn:fn}
Suppose $n = pq$ for odd primes $p > q$. Let $S_n$ be as above. We define the Fekete polynomial $f_n(x)\in \Z[x]$ to be the polynomial such that
\begin{align*}
    F_n(x)&=f_n(x) \cdot x \cdot \prod_{d\in S_n}\Phi_d(x)
\end{align*}
\end{definition}

\begin{prop}
\label{prop:values_at_1and-1}
Suppose $n = pq$ for odd primes $p> q$. Let $f_n$ be the Fekete polynomial as in \cref{defn:fn}. Let $D_1 = \gcd(p-1,q-1)$, $D_2 = \gcd(pq+1,p+q)$, $D_3 = \gcd(pq+1,p+q,p-1) = \gcd(p-1,q+1)$, $D_4 = \gcd(pq+1,p+q,q-1) = \gcd(p+1,q-1)$. Then $f_n$ is a reciprocal polynomial of even degree. More precisely,
\begin{align*}
    \deg(f_n) = \begin{cases}
    pq-p-q-1+D_1+D_3+D_4-D_2 \quad &\text{if } p \not\equiv 1 \pmod{q}\\
    pq-p-2+D_1+D_3+D_4-D_2 \quad &\text{if } p \equiv 1 \pmod{q}.
    \end{cases}
\end{align*}
Furthermore, we have
\begin{align*}
    &f_n(1) = \begin{cases}
    \dfrac{D_1D_3D_4}{2D_2} \quad &\text{if } p \not\equiv 1 \pmod{q}\\
    \dfrac{qD_1D_3D_4}{2D_2} \quad &\text{if } p \equiv 1 \pmod{q},\\
    \end{cases} \\
    &f_n(-1) = \frac{-D_1D_3D_4}{2D_2}.
\end{align*}
\end{prop}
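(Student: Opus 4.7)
The plan is to verify the four claims (reciprocity, degree formula with evenness, $f_n(1)$, and $f_n(-1)$) by combining the factorization from \cref{defn:fn} with inclusion--exclusion on the three sets whose union defines $S_n$. The involution $a \mapsto n-a$ on $\{1 \leq a \leq n-1 : \gcd(a,n) = 1\}$ shows that $F_n(x)/x$ is a reciprocal polynomial of degree $n-2$, and since each $\Phi_d$ with $d > 1$ is reciprocal, so is $f_n$.

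For the degree, set $A = \{d > 1 : d \mid q-1\}$, $B = \{d > 1 : d \mid p-1, d \neq q\}$, $C = \{d > 1 : d \mid D_2\}$, so $S_n = A \cup B \cup C$. One verifies that the pairwise intersections correspond to divisors $> 1$ of $D_1$, $D_4$, and $D_3$, while the triple intersection is $\{2\}$ (the last using $D_1 \mid p+q-2$ together with $2 \mid D_1$ to conclude $\gcd(D_1, D_2) = 2$). Applying $\sum_{d \mid m,\, d > 1} \varphi(d) = m-1$ and inclusion--exclusion produces $\sum_{d \in S_n} \varphi(d)$, with an extra correction $-(q-1)$ in Case 2 because $d=q$ is excluded from $B$. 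Subtracting from $\deg(F_n/x) = n-2$ recovers the two formulas. Evenness follows from a parity check, since $p,q$ are odd and each of $D_1, D_2, D_3, D_4$ is even.

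For $f_n(1)$, the factorization gives $f_n(1) = \varphi(n) / \prod_{d \in S_n} \Phi_d(1)$. Using $\prod_{d \mid m,\, d > 1} \Phi_d(1) = m$ and the same inclusion--exclusion setup (dividing out the factor $\Phi_q(1) = q$ in Case 2, and noting $\Phi_2(1) = 2$ contributes from the triple intersection), one computes the product and divides. For $f_n(-1)$, we use $F_n(-1) = 0$ from \cref{cor:derivative_negative_one} and that $\zeta_2 = -1$ is a simple root of $F_n$ by \cref{thm:multiplicity_of_Phid}. Writing $x \prod_{d \in S_n} \Phi_d(x) = x(x+1) R(x)$ with $R(x) = \prod_{d \in S_n,\, d \neq 2} \Phi_d(x)$, the derivative at $x = -1$ equals $-R(-1)$, so L'H\^opital yields $f_n(-1) = -F_n'(-1)/R(-1)$, with $F_n'(-1) = (p-1)(q-1)/2$ coming from \cref{prop:derivative_at_minus1}. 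To compute $R(-1)$ we use the analog identity $\prod_{d \mid m,\, d > 2} \Phi_d(-1) = m/2$ for even $m$ (obtained from $(x^m-1)/(x^2-1)$ by L'H\^opital at $x = -1$), noting that $\Phi_q(-1) = 1$ for odd primes $q$ and that the triple intersection after removing $d = 2$ is empty. Inclusion--exclusion then gives $R(-1) = (p-1)(q-1) D_2 / (D_1 D_3 D_4)$, and combining produces the claimed value.

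The main obstacle is the careful bookkeeping in inclusion--exclusion, in particular tracking the special role of $d = 2$ (which lies in all of $A$, $B$, $C$ and accounts for the unique simple zero of $F_n$ at $x = -1$) and the distinction between Cases 1 and 2 arising from whether $q \mid p-1$, which alters the contribution from $B$ but, somewhat miraculously, only modifies $f_n(1)$ by a factor of $q$ while leaving $f_n(-1)$ unchanged because $\Phi_q(-1) = 1$.
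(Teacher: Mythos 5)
Your proposal is correct and follows essentially the same route as the paper: the paper also performs the inclusion--exclusion over the three divisor sets (packaging it into closed-form rational expressions $f,g,h$ with $\prod_{d\in S_n}\Phi_d = fgh$), reads off the degree and value at $1$ from these, and computes $f_n(-1)$ by differentiating at $x=-1$ using \cref{prop:derivative_at_minus1}, exactly as you do after isolating the factor $\Phi_2$. Your bookkeeping of the pairwise and triple intersections, the $q\mid p-1$ correction, and the identity $\prod_{d\mid m,\,d>2}\Phi_d(-1)=m/2$ all check out.
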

\begin{proof}
Let
\begin{align*}
    f(x) = \prod_{\substack{d \mid q-1 \\ d \neq 1}} \Phi_d(x), \quad g(x) = \prod_{\substack{d \mid p-1 \\ d \neq q \\ d \nmid q-1}} \Phi_d(x), \quad h(x) = \prod_{\substack{d \mid \gcd(pq+1,p+q) \\ d \nmid q-1, d \nmid p-1}} \Phi_d(x).
\end{align*}
Then we have $F_n(x) = xf(x)g(x)h(x)f_n(x)$. Using the inclusion-exclusion principle, we get the following formulas for $f, g, h$ in this decomposition:
\begin{align*}
    &f(x) = \frac{1-x^{q-1}}{1-x} = \frac{F_q(x)}{x}, \quad g(x) = \begin{cases}
    \frac{1-x^{p-1}}{1-x^{D_1}} \quad &\text{if } p \not\equiv 1 \pmod{q}\\
    \frac{(1-x^{p-1})(1-x)}{(1-x^{D_1})(1-x^q)} \quad &\text{if } p \equiv 1 \pmod{q},\\
    \end{cases}\\
    &h(x) = \frac{(1-x^{D_2})(1-x^2)}{(1-x^{D_3})(1-x^{D_4})}.
\end{align*}
Since each of the polynomials $F_n(x)/x$, $f, g, h$ are reciprocal, we deduce that $f_n$ is also a reciprocal polynomial. The formula for $\deg(f_n)$ can also be deduced easily from the explicit descriptions of $f,g,h$ given above.

It is also clear from this description that
\begin{align*}
    & f(1) = q-1, \quad g(1) = \begin{cases} \frac{p-1}{D_1} \quad \text{if } p \not\equiv 1 \pmod{q}\\ \frac{p-1}{qD_1} \quad \text{if } p \equiv 1 \pmod{q}, \end{cases}\\
    & h(1) = \frac{2D_2}{D_3D_4}.
\end{align*}
Since $F_n(1) = (p-1)(q-1)$, we infer the value of $f_n(1)$.

Note that $D_i$ is even for $1 \leq i \leq 4$, and hence $g(-1), h(-1) \neq 0$ whereas $F_n(-1) = f(-1) = 0$. 
Hence 
$F'_n(-1)=(-1)f'(-1)g(-1)h(-1)f_n(-1)$.
Thus, we calculate $F_n'(-1)$ and $f'(-1)$ using \cref{prop:derivative_at_minus1}, and $g(-1)$ and $h(-1)$ using calculus, to infer the value of $f_n(-1)$:
\begin{align*}
    F_n'(-1) = \frac{(p-1)(q-1)}{2}, \quad &f'(-1) = -F_q'(-1) = \frac{q-1}{2},\\
    g(-1) = \frac{p-1}{D_1}, \quad &h(-1) = \frac{2D_2}{D_3D_4}.
\end{align*}
\end{proof}

\begin{cor}
$f_{pq}(x)$ is not a product of cyclotomic polynomials. In particular, $f_{pq}(x)$ is not a cyclotomic polynomial. 
\end{cor}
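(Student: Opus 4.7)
The plan is to argue by contradiction, using the explicit evaluations in Proposition~\ref{prop:values_at_1and-1}. Since each $D_j$ and $q$ is a positive integer, those formulas give $f_{pq}(1) > 0$ and $f_{pq}(-1) < 0$; in particular both values are nonzero. Suppose then that $f_{pq}(x) = \prod_{i} \Phi_{d_i}(x)$ is a product of cyclotomic polynomials (with multiplicities allowed).

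From $f_{pq}(1) \neq 0$ we see that no $d_i$ equals $1$, since $\Phi_1(1) = 0$; from $f_{pq}(-1) \neq 0$ we see that no $d_i$ equals $2$, since $\Phi_2(-1) = 0$. The remaining observation is that $\Phi_d(-1) > 0$ for every $d > 2$. Indeed, for such $d$ the primitive $d$-th roots of unity pair up as $\{\zeta,\bar\zeta\}$ with $\zeta \neq -1$, and each pair contributes $(-1-\zeta)(-1-\bar\zeta) = (1+\zeta)(1+\bar\zeta) = \abs{1+\zeta}^2 > 0$ to $\Phi_d(-1)$. Multiplying over all pairs yields $\Phi_d(-1) > 0$, and therefore $f_{pq}(-1) = \prod_i \Phi_{d_i}(-1) > 0$, contradicting $f_{pq}(-1) < 0$. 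The ``in particular'' statement of the corollary follows at once, since a single cyclotomic polynomial is a product of cyclotomic polynomials.

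The essential input is Proposition~\ref{prop:values_at_1and-1}, which is already in hand; once its sign information is extracted, the argument requires no case analysis beyond the $p \equiv 1 \pmod{q}$ dichotomy already absorbed into that proposition, and no serious obstacle remains. The only point worth double-checking is the positivity of $\Phi_d(-1)$ for $d > 2$, which is a well-known fact recorded above for completeness.
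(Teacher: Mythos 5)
Your proof is correct and follows essentially the same route as the paper: both rule out $\Phi_1$ and $\Phi_2$ using the nonvanishing of $f_{pq}$ at $\pm 1$, and then derive a contradiction from $\Phi_d(-1)>0$ for $d>2$ against the negative value $f_{pq}(-1)$ computed in Proposition~\ref{prop:values_at_1and-1}. The only difference is cosmetic: the paper cites the positivity of $\Phi_d(-1)$ from the literature, whereas you supply the short conjugate-pairing argument directly.
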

\begin{proof}
Suppose on the contrary that  
$f_{pq}(x) = \prod_{i=1}^r \Phi_{m_i}(x)$,
where $1 \leq m_1 \leq m_2 \dots \leq m_r$ are positive integers. Since $f_{pq}(1)f_{pq}(-1) \neq 0$, we can assume that $m_1 >2$. By \cite[Lemma 7]{bzdega2016cyclotomic}, we have $\Phi_{m_i}(-1)>0$ for all $1 \leq i \leq r.$ Consequently $f_{pq}(-1)>0.$ This contradicts the above determination of $f_{pq}(-1).$
\end{proof}

\begin{definition}
\label{def:g}
    We define $g_n$ to be the trace polynomial of $f_n$, i.e., it is the unique polynomial such that $g_n \left(x+\frac{1}{x}\right) = x^{-\deg(f_n)/2}f_n(x)$.
\end{definition}
\begin{prop} \label{prop:discriminant_f}
Suppose $n = pq$ for odd primes $p > q$. Let $f_n$ be the Fekete polynomial as in \cref{defn:fn}, and $g_n$ its trace polynomial as in \cref{def:g}. Assume $\disc(g_n)$ (or equivalently $\disc(f_n)$) is nonzero).\\
If $p \not\equiv 1 \pmod{q}$, then up to squares, we have
\begin{align*}
    \disc(f_n) = \begin{cases}
    -1 \quad &\text{if } p,q \equiv 1 \pmod{4}\\
    1 \quad &\text{otherwise}.
    \end{cases}
\end{align*}
If $p \equiv 1 \pmod{q}$, then up to squares, we have
\begin{align*}
    \disc(f_n) = \begin{cases}
    q \quad &\text{if } p \equiv 3 \pmod{4}~\text{ and } q \equiv 1 \pmod{4}\\
    -q \quad &\text{otherwise}.
    \end{cases}
\end{align*}
\end{prop}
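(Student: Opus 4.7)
The plan is to exploit the reciprocal structure of $f_n$ via its trace polynomial $g_n$ to express $\disc(f_n)$, modulo squares, as a simple product of values $f_n(\pm 1)$ together with a sign $(-1)^d$, where $2d = \deg(f_n)$. Concretely, write the roots of $f_n$ as $\alpha_1, \bar\alpha_1, \ldots, \alpha_d, \bar\alpha_d$ where $\alpha_i \bar\alpha_i = 1$ and $\alpha_i + \bar\alpha_i = \beta_i$ are the roots of $g_n$. A direct calculation shows that
\[
(\alpha_i - \alpha_j)(\alpha_i - \bar\alpha_j)(\bar\alpha_i - \alpha_j)(\bar\alpha_i - \bar\alpha_j) = (\beta_i - \beta_j)^2
\quad\text{and}\quad (\alpha_i - \bar\alpha_i)^2 = \beta_i^2 - 4,
\]
so that $\disc(f_n) = \disc(g_n)^2 \cdot \prod_i(\beta_i^2 - 4) = \disc(g_n)^2 \cdot g_n(2)\, g_n(-2)$. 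Modulo squares, this gives $\disc(f_n) \equiv g_n(2) g_n(-2)$.

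Next, I would translate $g_n(\pm 2)$ back to values of $f_n$ using the defining relation $g_n(x+1/x) = x^{-d} f_n(x)$. Evaluating at $x = 1$ gives $g_n(2) = f_n(1)$, and at $x = -1$ gives $g_n(-2) = (-1)^d f_n(-1)$, so
\[
\disc(f_n) \equiv (-1)^d f_n(1) f_n(-1) \pmod{(\Q^\times)^2}.
\]
Plugging in the formulas from \cref{prop:values_at_1and-1}, we get $f_n(1) f_n(-1) = -\bigl(\tfrac{D_1D_3D_4}{2D_2}\bigr)^2$ in the case $p \not\equiv 1 \pmod q$ and $f_n(1) f_n(-1) = -q\bigl(\tfrac{D_1D_3D_4}{2D_2}\bigr)^2$ in the case $p \equiv 1 \pmod q$. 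Modulo squares this reduces to $-1$ and $-q$ respectively, so $\disc(f_n) \equiv (-1)^{d+1}$ in the first case and $\disc(f_n) \equiv (-1)^{d+1} q$ in the second.

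Finally, I would compute $d \pmod 2$ using the degree formula for $f_n$. Since each of $D_1,D_2,D_3,D_4$ is even (as $p,q$ are odd primes), one can analyze their residues modulo $4$ according to $p,q \bmod 4$: for instance $4 \mid D_1$ iff $p \equiv q \equiv 1 \pmod 4$, $4 \mid D_3$ iff $p \equiv 1 \pmod 4$ and $q \equiv 3 \pmod 4$, and $4 \mid D_2$ iff $p \not\equiv q \pmod 4$. Combined with the easy observation that $(p-1)(q-1)/2$ is always even (for $p,q > 2$), this lets us evaluate $(D_1 + D_3 + D_4 - D_2)/2 \pmod 2$ and hence $d \pmod 2$ in each of the four sub-cases $(p \bmod 4, q \bmod 4)$, and the case $p \equiv 1 \pmod q$ needs an extra contribution from $p(q-1)/2 \pmod 2$. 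Going through the four (respectively, eight) sub-cases yields precisely the sign prescribed by the statement.

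The main obstacle is the last step: the bookkeeping of the four parities $D_i \bmod 4$ and their effect on $d \bmod 2$ is a somewhat fiddly case analysis, even though each individual check is elementary. The intermediate identity $\disc(f_n) \equiv (-1)^d f_n(1)f_n(-1)$, together with \cref{prop:values_at_1and-1}, reduces the entire proposition to this purely combinatorial computation.
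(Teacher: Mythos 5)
Your proposal is correct and follows essentially the same route as the paper: both reduce $\disc(f_n)$ modulo squares to $(-1)^{\deg(f_n)/2} f_n(1) f_n(-1)$, substitute the values from \cref{prop:values_at_1and-1}, and finish by computing $\deg(f_n)$ modulo $4$ through the parities of $D_1, D_2, D_3, D_4$ modulo $4$. The only difference is that you derive the identity $\disc(f_n) = (-1)^{\deg(f_n)/2} f_n(1) f_n(-1) \disc(g_n)^2$ from the pairing of roots $\alpha_i, 1/\alpha_i$, whereas the paper simply quotes it as a standard property of reciprocal polynomials; your case analysis of the $D_i$ checks out in all residue classes of $(p,q)$ modulo $4$.
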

\begin{proof}
Since $f_n$ is a reciprocal polynomial,
\begin{align*}
    \disc(f_n)=(-1)^{\deg(f_n)/2}f_n(1)f_n(-1) \disc(g_n)^2.
\end{align*}
Therefore \cref{prop:values_at_1and-1} tells us that up to squares, we have
\begin{align*}
    \disc(f_n) = \begin{cases}
    (-1)^{\deg(f_n)/2}(-1) \quad &\text{if } p \not\equiv 1 \pmod{q}\\
    (-1)^{\deg(f_n)/2}(-q) \quad &\text{if } p \equiv 1 \pmod{q}.
    \end{cases}
\end{align*}
Calculating $\deg(f_n)$ modulo $4$, again using \cref{prop:values_at_1and-1}, we obtain the stated result.
\end{proof}

We now investigate roots of the Fekete polynomial $F_{pq}$ in $\overline{\F}_p$. The objective of this study is to prove that $f_n$ is separable over $\Z$. We will see later that, in some special cases, this can be done by proving separability over $\F_p$. We first recall the following definition.

\begin{definition}
    Let $f,g$ be two polynomials. The Wronskian  $W(f,g)$ of $f$ and $g$ is defined by the formula 
    $W(f,g)=f'g-g'f$. 
\end{definition}

Let $q$ be a prime. We now introduce the polynomial
$u_q(x) = W(s_q(x),F_q(x)) \in \Z[x]$
where $s_q(x) = x^q-1$ and $F_q(x) = \sum\limits_{i=1}^{q-1} x^i$. Then $u_q$ has the following explicit formula 
\begin{equation}
\label{def:uq}
    u_q(x) = \sum_{1 \leq i \leq q-1} (q-i)x^{q-1+i} + \sum_{1 \leq i \leq q-1} ix^{i-1} = \Phi_q(x)^2 - q x^{q-1}.    
\end{equation}

\begin{lem} 
\label{lem:u_mod_q}
Over $\F_q$, we have $u_q(x)= (x-1)^{2q-2} \pmod q$.
\end{lem}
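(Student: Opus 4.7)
The plan is to use the explicit formula for $u_q$ already derived in \cref{def:uq}, namely $u_q(x) = \Phi_q(x)^2 - qx^{q-1}$, and simply reduce modulo $q$.

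First, the term $qx^{q-1}$ vanishes modulo $q$, so $u_q(x) \equiv \Phi_q(x)^2 \pmod{q}$. It therefore suffices to show that $\Phi_q(x) \equiv (x-1)^{q-1} \pmod{q}$. This follows from the freshman's dream: over $\F_q$ we have $x^q - 1 = (x-1)^q$, and dividing both sides by $x-1$ (which is legitimate in $\F_q[x]$ since $\Phi_q(x)(x-1) = x^q-1$) gives
\[
\Phi_q(x) = \frac{x^q-1}{x-1} \equiv \frac{(x-1)^q}{x-1} = (x-1)^{q-1} \pmod{q}.
\]
Squaring yields $u_q(x) \equiv (x-1)^{2q-2} \pmod{q}$, as claimed.

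There is essentially no obstacle here; the only subtlety is that the factor $qx^{q-1}$ was introduced in the definition of $u_q$ precisely so that the Wronskian, which a priori has no reason to reduce to a pure power of $(x-1)$, becomes $\Phi_q(x)^2$ after cancellation, and this is what makes the congruence clean modulo $q$.
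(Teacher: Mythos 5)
Your proof is correct. It takes a marginally different route from the paper: the paper reduces the Wronskian $u_q = s_q'F_q - F_q's_q$ modulo $q$ term by term (using $s_q'\equiv 0$, $s_q \equiv (x-1)^q$, and a direct computation giving $F_q' \equiv -(x-1)^{q-2}$), whereas you start from the closed form $u_q = \Phi_q(x)^2 - qx^{q-1}$ already recorded in \cref{def:uq} and apply the freshman's dream to $\Phi_q$. Both arguments hinge on the same fact $x^q-1 = (x-1)^q$ in $\F_q[x]$, and yours is, if anything, slightly cleaner given that the closed form is available; there is no gap.
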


\begin{proof}
We have 
\[ F_q(x)= x \frac{x^{q-1}-1}{x-1} = \frac{x^q-x}{x-1}, \quad F'_q(x) = \frac{(qx^{q-1}-1)(x-1)-(x^q-x)}{(x-1)^2}.\]
Over $\F_q$ this becomes 
\[ F'_q(x) = \frac{1-x^q}{(x-1)^2}= - (x-1)^{q-2}.\]
We also have that $s_q(x) = (x-1)^q$ and $s'_q(x)=0$ over $\F_q$. Hence
\[
u_q(x) = s'_q(x)F_q(x)-F'_q(x)s_q(x)=(x-1)^{2q-2} \pmod q.
\qedhere
\]
\end{proof}
\begin{cor}
    The polynomial $u_q$ is irreducible. 
\end{cor}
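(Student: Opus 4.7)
The plan is to deduce irreducibility of $u_q$ from an Eisenstein-at-$q$ argument applied to the shifted polynomial $u_q(x+1)$. The ingredients are already in place: the preceding lemma gives the mod-$q$ factorisation $u_q(x)\equiv(x-1)^{2q-2}\pmod q$, and the explicit formula \eqref{def:uq} lets us compute $u_q(1)$ by inspection.

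First I would record the degree and leading coefficient of $u_q$. Since $\deg\Phi_q=q-1$, the polynomial $\Phi_q(x)^2$ has degree $2q-2$ with leading coefficient $1$, while the term $qx^{q-1}$ has strictly smaller degree. Hence $u_q$ is monic of degree $2q-2$, and so $u_q(x+1)$ is also monic of degree $2q-2$.

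Next, substituting $x\mapsto x+1$ in \cref{lem:u_mod_q} yields
\[
u_q(x+1)\equiv x^{2q-2}\pmod q,
\]
which means that every coefficient of $u_q(x+1)$ other than the leading one is divisible by $q$. The remaining Eisenstein condition to verify is that the constant term $u_q(1)$ has $q$-adic valuation exactly $1$. But directly from \eqref{def:uq},
\[
u_q(1)=\Phi_q(1)^2-q\cdot 1^{q-1}=q^2-q=q(q-1),
\]
and since $\gcd(q,q-1)=1$, this is divisible by $q$ but not by $q^2$.

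Thus $u_q(x+1)$ is Eisenstein at the prime $q$, so it is irreducible in $\Q[x]$, and therefore $u_q(x)$ is irreducible as well. There is no real obstacle here; the only thing to be careful about is that the $qx^{q-1}$ term does not interfere with the leading coefficient (which it does not, since $q-1<2q-2$ for any prime $q$), so the Eisenstein criterion applies cleanly.
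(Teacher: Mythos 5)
Your proof is correct and follows the paper's own argument exactly: both pass to $v_q(x)=u_q(x+1)$, use \cref{lem:u_mod_q} to see $v_q(x)\equiv x^{2q-2}\pmod q$, compute the constant term $u_q(1)=q(q-1)$ from \eqref{def:uq}, and apply Eisenstein at $q$. Your added check that $u_q$ is monic (so the leading coefficient is not divisible by $q$) is a small but welcome detail that the paper leaves implicit.
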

\begin{proof}
 Let $v_q(x)=u_q(x+1).$ Then $v_q(x) \equiv x^{2q-2} \pmod q$ and $v_q(0)=u_q(1) = q(q-1)$. By Eisenstein's criterion for irreducibility, we conclude that $v_q$ (and hence $u_q$) is irreducible. 
 \end{proof}

\begin{prop} \label{prop:roots_over_fp}
\label{prop:qp_multiplicity_mod_p}
Suppose $n= pq$ for odd primes $p > q$. Let $x_0\in \overline{\F}_p$ be a root of $F_n$. Then
\begin{enumerate}[(a)]
    \item $\mult_{x_0}(F_n) - 1 = \mult_{x_0}(u_q)$.
    \item If $\disc(u_q) \neq 0 \mod p$, then $\mult_{x_0}(F_n) \leq 2$.
    \item Suppose $x_0 \in \F_p$. Then $\mult_{x_0}(F_n) - 1 
    = \mult_{x_0}(f_n)$.
\end{enumerate}
\end{prop}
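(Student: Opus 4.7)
The plan is to derive an explicit formula for $F_n'(x)$ modulo $p$ and then translate the multiplicity questions into statements about $u_q$ and the explicit factors of $F_n$. The starting point is \cref{prop:recursive_F} applied with $n=q$, which gives
\[ F_n(x) = \frac{x^{pq}-1}{x^q-1} F_q(x) - F_q(x^p) \]
in $\Z[x]$. Reducing modulo $p$ and using the Frobenius identities $x^{pq}-1=(x^q-1)^p$ and $F_q(x^p)=F_q(x)^p$ in $\F_p[x]$, we get
\[ F_n(x) \equiv s_q(x)^{p-1} F_q(x) - F_q(x)^p \pmod p. \]
Differentiating (with $p-1\equiv -1$ and $p\equiv 0$ in $\F_p$), the cross-terms collapse and I expect to obtain the clean identity
\[ F_n'(x) \equiv -s_q(x)^{p-2}\, u_q(x) \pmod p. \]

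For part (a), the first step is to show that $s_q(x_0)\neq 0$ for every root $x_0$ of $F_n$ in $\overline{\F}_p$: if $x_0^q=1$, the expression above collapses to $F_n(x_0)=-F_q(x_0)^p$, and $F_q(x_0)$ equals $q-1$ (when $x_0=1$) or $-1$ (when $x_0$ is a non-trivial $q$-th root of unity), neither of which vanishes in $\F_p$ since $p>q$. Hence $\mult_{x_0}(F_n')=\mult_{x_0}(u_q)$. To convert this into a statement about $\mult_{x_0}(F_n)$ one needs $p\nmid \mult_{x_0}(F_n)$, which I plan to obtain by applying the identity $t^{p-1}-1=\prod_{c\in\F_p^\times}(t-c)$ with $t=s_q/F_q$ to produce the factorisation
\[ F_n = F_q \cdot \prod_{c\in \F_p^\times}(s_q - c F_q) \quad\text{in }\F_p[x]. \]
These factors should be pairwise coprime, since any common root would force $s_q$ and $F_q$ to vanish simultaneously, which was just excluded. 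Each factor has degree at most $q<p$, so $\mult_{x_0}(F_n)\leq q<p$, and then $\mult_{x_0}(F_n)-1=\mult_{x_0}(F_n')=\mult_{x_0}(u_q)$, proving (a). Part (b) is then immediate.

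For part (c), since $F_n=x\cdot \prod_{d\in S_n}\Phi_d\cdot f_n$ by \cref{defn:fn}, it suffices to show that $\mult_{x_0}\bigl(x\cdot\prod_{d\in S_n}\Phi_d\bigr)=1$ whenever $x_0\in\F_p$ is a root of $F_n\bmod p$. The case $x_0=0$ is immediate. For $x_0\in\F_p^\times$, let $d$ denote its multiplicative order, so $d\mid p-1$. I can rule out $d=1$ because $F_n(1)\equiv -(q-1)\not\equiv 0\pmod p$, and $d=q$ because it would force $s_q(x_0)=0$, contradicting the previous paragraph. Hence $1<d\neq q$ with $d\mid p-1$, so $d\in S_n$. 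A direct check from \cref{eq:S_n} shows that every element of $S_n$ is coprime to $p$, so the corresponding $\Phi_{d'}$'s are separable and pairwise coprime over $\F_p$; exactly one of them, namely $\Phi_d$, vanishes at $x_0$, and with multiplicity one.

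The main obstacle is the bound $\mult_{x_0}(F_n)<p$ in part (a): without it, the passage between vanishing orders of $F_n$ and of $F_n'$ fails in characteristic $p$, and it is this bound that motivates the detour through the product factorisation $F_n=F_q\prod_c(s_q-cF_q)$.
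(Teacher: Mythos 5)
Your proposal is correct, and its core is the same as the paper's: reduce $F_n(x) = \frac{x^{pq}-1}{x^q-1}F_q(x) - F_q(x^p)$ modulo $p$, differentiate to get $F_n' \equiv -s_q^{p-2}u_q \pmod p$, check that no root of $F_n$ in $\overline{\F}_p$ is a $q$-th root of unity, and for (c) observe that the root must lie on a cyclotomic factor $\Phi_d$ with $d \mid p-1$, $d \neq 1, q$, so that $d \in S_n$. Your evaluations of $F_q$ at $q$-th roots of unity agree with the paper's appeal to \cref{prop:value_at_zeta_d}, and your treatment of (c) (ruling out $d=1$ and $d=q$, checking that every $d' \in S_n$ is prime to $p$ so that $x\prod_{d'\in S_n}\Phi_{d'}$ is separable mod $p$) is a careful expansion of what the paper states tersely.

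Where you genuinely add something is the passage from $\mult_{x_0}(F_n') = \mult_{x_0}(u_q)$ to $\mult_{x_0}(F_n) - 1 = \mult_{x_0}(u_q)$. In characteristic $p$ this step fails when $p \mid \mult_{x_0}(F_n)$, and the paper's proof does not address this; it simply asserts the conclusion from the relation between $F_n'$ and $u_q$. Your factorization $F_n \equiv F_q\cdot\prod_{c\in\F_p^\times}(s_q - cF_q) \pmod p$, with the factors pairwise coprime because $s_q$ and $F_q$ have no common root (as $F_q(1)=q-1$ and $F_q(\zeta)=-1$ for nontrivial $q$-th roots of unity $\zeta$, neither zero mod $p$ since $p>q$), bounds $\mult_{x_0}(F_n) \leq q < p$ and closes this gap cleanly. (One could alternatively argue that $p\mid\mult_{x_0}(F_n)$ would force $\mult_{x_0}(u_q)=\mult_{x_0}(F_n')\geq p$, which contradicts $\deg u_q = 2q-2$ only when $p>2q-2$, so your product decomposition is really needed for small $p$.) In short: same route as the paper, plus a necessary repair that the paper omits.
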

\begin{proof}
Using the recursive formula from \cref{prop:recursive_F}, we have
\begin{align*}
    F_n(x) = \frac{x^{qp}-1}{x^q-1}F_q(x) - F_q(x^p),
\end{align*}
and hence
\begin{align*}
F_n(x)&\equiv (x^q-1)^{p-1}F_q(x) - F_q(x)^p \pmod p,\\
F'_n(x)&\equiv F'_q(x)(x^q-1)^{p-1}-qx^{q-1}F_q(x)(x^q-1)^{p-2}\pmod p \\ 
          &\equiv-(x^q-1)^{p-2} u_q(x) \pmod{p}.
\end{align*}

\begin{enumerate}[(a)]
    \item 
    \cref{prop:value_at_zeta_d} says that $F_n(\zeta_q)= -\varphi(p) \equiv 1 \pmod p$, and $F_n(1) = \varphi(n)=(q-1)(p-1) \equiv 1-q \not\equiv 0 \pmod p$. Therefore, if $x_0 \in \overline{\F}_p$ is a zero of $F_n$, then it is not a zero of $x^q-1$.
    Hence the relation of $F'_n$ and $u_q$ obtained above shows that $\mult_{x_0}(u_q)=\mult_{x_0}(F_n) - 1$.
    \item This is a straight-forward consequence of Part (a). If $\disc(u_q) \neq 0 \mod p$, then the reduction of $u_q$ modulo $p$ is separable. Hence $\mult_{x_0}(u_q) \leq 1$ and $\mult_{x_0}(F_n) \leq 2$.
    \item 
    Since $x_0 \in \F_p$, it is a $(p-1)$-th root of unity. Since $x_0$ is a zero of $F_n$, we know as in Part (a) that it is not a $q$-th root of unity. So there exists some $d$ dividing $p-1$, $d \neq q$, such that $x_0$ is a root of the $d$-th cyclotomic polynomial $\Phi_d$. Therefore by \cref{prop:cyclotomic_factors_pq} we get that $\mult_{x_0}(F_n) - 1 = \mult_{x_0}(f_n)$.
\end{enumerate}
\end{proof}
\begin{rem}
     We note that irreducibility of the polynomial $u_q \in \Z[x]$ implies in particular that $\disc(u_q) \neq 0$. Therefore for primes $p$ sufficiently large compared to $q$, we have $\disc(u_q) \neq 0 \pmod p$ and hence $\mult_{x_0} (F_{pq}) \leq 2$.
\end{rem}
To further study separability of $F_n$ over $\F_p$, we introduce the following auxiliary polynomials. Let 
\[ a(x,y)= s_q(x)- y t_q(x) ~\text{ where } s_q(x)=x^q-1, ~t_q(x)=F_q(x)=\sum_{i=1}^{q-1} x^i,\]
and let $R_q(y)$ be the resultant of $u_q(x)$ and $a(x,y)$ with respect to the variable $x$. 
There is a direct link between separability of $F_n$ and the arithmetic of $R_q$.

\begin{prop}
\label{prop:repeated_root_Res}
Suppose that $F_n$ has a repeated root $x_0 \in \overline{\F}_p.$ Then $R_q$ has a root $\mu \in \F_p$.
\end{prop}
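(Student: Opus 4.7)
The plan is to exploit the mod-$p$ factorisations
\[F_n(x) \equiv (x^q-1)^{p-1}F_q(x) - F_q(x)^p \pmod{p}, \qquad F_n'(x) \equiv -(x^q-1)^{p-2} u_q(x) \pmod{p}\]
established in the proof of \cref{prop:qp_multiplicity_mod_p}, and to package the two conditions $F_n(x_0) = 0$ and $F_n'(x_0) = 0$ as a pair of polynomial identities satisfied by a well-chosen $y_0 \in \F_p$.

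First, \cref{prop:qp_multiplicity_mod_p}(a) already gives $u_q(x_0) = 0$ in $\overline{\F}_p$ and shows that $x_0^q \neq 1$. Rewriting $F_n(x_0) = 0$ via the first displayed congruence then yields
\[F_q(x_0)\bigl[(x_0^q-1)^{p-1} - F_q(x_0)^{p-1}\bigr] = 0.\]
The next step is to rule out $F_q(x_0) = 0$. If $x_0 = 0$, then $F_n'(0)$ is the coefficient of $x$ in $F_n$, namely $1$ (since $\gcd(1,n) = 1$), contradicting $F_n'(x_0) = 0$. If $x_0 \neq 0$ with $F_q(x_0) = 0$, then $x_0 \neq 1$ (since $F_q(1) = q-1 \not\equiv 0 \pmod{p}$) and $x_0^{q-1} = 1$; this forces $\Phi_q(x_0) = (x_0^q-1)/(x_0-1) = 1$ and hence $u_q(x_0) = 1 - q$, which is nonzero modulo $p$ because $0 < q-1 < p$, contradicting the first step.

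Third, with $F_q(x_0) \neq 0$ in hand, set $y_0 := (x_0^q-1)/F_q(x_0) \in \overline{\F}_p$, which is nonzero because $x_0^q \neq 1$. The displayed identity now gives $y_0^{p-1} = 1$, so $y_0 \in \F_p$. By construction $a(x_0, y_0) = (x_0^q-1) - y_0 F_q(x_0) = 0$, and we already have $u_q(x_0) = 0$; hence $x_0$ is a common root of $u_q(x)$ and $a(x, y_0)$ in $\overline{\F}_p$. Since $u_q(x)$ and $a(x,y)$ are both monic in $x$, the resultant commutes with reduction modulo $p$ and with the specialisation $y \mapsto y_0$, so $R_q(y_0) \equiv 0 \pmod{p}$; the desired root is $\mu = y_0 \in \F_p$.

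The main obstacle is the case analysis ruling out $F_q(x_0) = 0$, which is precisely where the hypothesis $p > q$ enters through the explicit form of $u_q$; the remaining steps are a direct repackaging of the mod-$p$ identities already recorded in \cref{prop:qp_multiplicity_mod_p}.
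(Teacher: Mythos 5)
Your proof is correct and follows essentially the same route as the paper: establish $u_q(x_0)=0$ via \cref{prop:qp_multiplicity_mod_p}(a), rule out $F_q(x_0)=0$, extract $\mu=(x_0^q-1)/F_q(x_0)\in\F_p^{\times}$ from $F_n(x_0)=0$, and conclude from the common root of $u_q(x)$ and $a(x,\mu)$ that $R_q(\mu)=0$. The only local difference is in excluding $F_q(x_0)=0$: the paper argues via separability of $(x-1)F_q(x)=x(x^{q-1}-1)$ and the shape of $F_n$ modulo $p$ to force $x_0=1$, whereas you evaluate $u_q(x_0)=1-q\neq 0$ directly from $u_q=\Phi_q^2-qx^{q-1}$ --- a slightly cleaner way to reach the same contradiction.
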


\begin{proof}
By \cref{prop:qp_multiplicity_mod_p} Part (a), $\mult_{x_0}(u_q)=\mult_{x_0}(F_n)-1\geq 1$, i.e., $x_0$ is a root of $u_q$.  
We claim that $x_0$ is not a root of $F_q$. Suppose on the contrary that it is a root of $F_q$. Then firstly it is a simple root because $(x-1)F_q(x)=x(x^{q-1}-1)$ is separable over $\F_p$. Since $x_0$ is a repeated root of $F_n(x)=(x^q-1)^{p-1}F_q(x)-F_q(x)^p$, and only a simple root of $F_q(x)$, we deduce that $x_0$ must be a root of $x^q-1$. On the other hand, since $x_0 \neq 0$ and $x_0(x_0^{q-1}-1) = (x_0-1)F_q(x_0) = 0$, we get that $x_0^{q-1}-1$. This forces $x_0=x_0^q-1- x_0(x_0^{q-1}-1)+1=1$. But this is a contradiction because $F_n(1) = \varphi(n) = (p-1)(q-1) \neq 0 \pmod p$. 

Now 
$0=F_n(x_0)=(x_0^q-1)^{p-1} F_q(x_0)-F_q(x_0)^{p}$ implies that $(x_0^q-1)^{p-1}=F_q(x_0)^{p-1}$. Hence $x_0^q-1=\mu F_q(x_0)$ for some $\mu \in \F_p^\times$. Thus $x_0$ is a root of the polynomial $a(x,\mu) = x^q-1-\mu F_q(x)\in \F_p[x]$. In particular, $a(x,\mu)$ and $u_q(x)$ have a common root. Therefore
\[
{\rm resultant}(u_q(x),a(x,\mu))=R_q(\mu)=0.
\qedhere
\]
\end{proof}

\subsection{Further properties of the resultant $R_q(y)$}
We record in this section a few interesting properties of the resultant polynomial $R_q(y)$. We need the following lemma.
\begin{lem}
\label{lem:resultants_sq_tq}
    Let $q$ be an odd prime, and $s_q(x) = x^q-1$ and $t_q(x) = \sum\limits_{i=1}^{q-1} x^i$. Then
    \begin{enumerate}[(a)]
        \item $\Res(s_q(x), s_q'(x))=q^q.$
        \item $\Res(t_q(x), t_q'(x))=-(q-1)^{q-3}.$
        \item $\Res(t_q(x), s_q(x)) =q-1.$
    \end{enumerate}
\end{lem}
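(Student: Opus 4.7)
The plan is to compute all three resultants directly via the product formula
\[
\Res(f,g) \;=\; \mathrm{lc}(f)^{\deg g} \prod_{f(\alpha)=0} g(\alpha),
\]
exploiting the explicit factorizations $s_q(x) = \prod_{\alpha^q = 1}(x-\alpha)$ and $t_q(x) = x(x^{q-1}-1)/(x-1)$ coming from the identities $(x-1)\Phi_q(x) = s_q(x)$ and $\Phi_q(x) = 1 + t_q(x)$.

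For part (a), the roots of $s_q$ are the $q$-th roots of unity, and since $s_q'(x) = q x^{q-1}$, for any root $\alpha$ we have $s_q'(\alpha) = q\alpha^{q-1} = q/\alpha$ using $\alpha^q = 1$. Hence
\[
\Res(s_q,s_q') \;=\; \prod_{\alpha^q = 1} \frac{q}{\alpha} \;=\; \frac{q^q}{\prod_{\alpha} \alpha},
\]
and $\prod_\alpha \alpha = (-1)^q \cdot (-1) = 1$ (the constant term of $s_q$ divided by the leading coefficient, with the usual sign), giving $q^q$.

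For part (c), since $q(q-1)$ is even, $\Res(t_q,s_q) = \Res(s_q,t_q) = \prod_{\alpha^q=1} t_q(\alpha)$. Writing $t_q = \Phi_q - 1$ and splitting the product according to whether $\alpha = 1$ or not gives $t_q(1) = q-1$ and $t_q(\alpha) = -1$ for the other $q-1$ roots of unity, so the resultant is $(q-1)(-1)^{q-1} = q-1$ since $q$ is odd.

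Part (b) is where real work is required. First I identify the roots of $t_q(x) = x(x^{q-1}-1)/(x-1)$: they are $0$ and the $q-2$ non-trivial $(q-1)$-th roots of unity $\beta$ (the value $\beta = 1$ is excluded because $t_q(1) = q-1 \neq 0$). One immediately computes $t_q'(0) = 1$. To evaluate $t_q'$ at a non-zero root $\beta$, differentiate $(x-1)\Phi_q(x) = x^q - 1$ to obtain $\Phi_q(x) + (x-1)\Phi_q'(x) = q x^{q-1}$; since $\Phi_q(\beta) = 1$ and $\beta^{q-1} = 1$, this yields
\[
t_q'(\beta) \;=\; \Phi_q'(\beta) \;=\; \frac{q-1}{\beta-1}.
\]
Therefore
\[
\Res(t_q,t_q') \;=\; t_q'(0) \cdot \prod_{\beta^{q-1}=1,\,\beta\neq 1} \frac{q-1}{\beta-1} \;=\; \frac{(q-1)^{q-2}}{\prod_{\beta}(\beta-1)}.
\]
The remaining denominator is computed by evaluating $\prod_{\beta^{q-1}=1,\,\beta\neq 1}(x-\beta) = 1 + x + \cdots + x^{q-2}$ at $x = 1$: this gives $q-1$, hence $\prod_\beta (\beta-1) = (-1)^{q-2}(q-1) = -(q-1)$ as $q$ is odd. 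Substituting yields $\Res(t_q, t_q') = -(q-1)^{q-3}$, as claimed.

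The only subtle point is the identification of the non-zero roots of $t_q$ and the use of the differentiated cyclotomic relation to get a closed form for $t_q'(\beta)$; once this is in hand, the cyclotomic product at $x=1$ handles the rest.
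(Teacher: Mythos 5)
Your argument is correct in all three parts. For (a) and (c) you follow essentially the same route as the paper: write the resultant as the product of one polynomial over the roots of the other (the leading-coefficient factors are trivial since $s_q$ and $t_q$ are monic), and use the evaluations $s_q'(\alpha)=q/\alpha$ with $\prod_{\alpha^q=1}\alpha=1$, respectively $t_q(\alpha)=\Phi_q(\alpha)-1=-1$ for $\alpha\neq 1$; this matches the paper up to cosmetic rewriting. Part (b) is where you genuinely diverge. The paper never evaluates $t_q'$ at any root of $t_q$: it uses the factorization $(x-1)t_q(x)=x^q-x$ together with multiplicativity of the discriminant, $\disc(fg)=\disc(f)\,\disc(g)\,\Res(f,g)^2$, computes $\disc(x^q-x)$ as a resultant with $qx^{q-1}-1$, and then converts $\disc(t_q)$ back into $\Res(t_q,t_q')$ via the sign $(-1)^{(q-1)(q-2)/2}$. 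You instead identify the roots of $t_q$ as $0$ together with the $q-2$ nontrivial $(q-1)$-th roots of unity, extract the closed form $t_q'(\beta)=\Phi_q'(\beta)=(q-1)/(\beta-1)$ from the differentiated identity $(x-1)\Phi_q(x)=x^q-1$ (valid since $\Phi_q(\beta)=1+t_q(\beta)=1$ and $\beta^{q-1}=1$), and finish with $\prod_{\beta}(\beta-1)=(-1)^{q-2}(q-1)=-(q-1)$. Both computations are sound and I verified your signs; yours is more elementary and keeps the bookkeeping inside a single explicit product, while the paper's discriminant route transfers the work to the simpler polynomial $x^q-x$ at the cost of tracking the $(-1)^{n(n-1)/2}$ conversion factors between resultants and discriminants.
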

\begin{proof}
\begin{enumerate}[(a)]
    \item Let $\zeta_k$, $k=1,\ldots, q$, be all the $q$-th roots of unity. Then
    \[
    \Res(s_q(x),s'_q(x))=\prod_{k=1}^q s'_q(\zeta_k)=q^q\left(\prod_{k=1}^q \zeta_k\right)^{q-1}=q^q.
    \]
    \item Since $(x-1)t_q(x)=x^q-x$, we have
    \[
    {\rm disc}(x^q-x)= {\rm disc}(x-1){\rm disc}(t_q(x)) \Res(x-1,t_q(x))^2.
    \]
    Let $\zeta_k$, $k=1,\ldots,q-1$, be all the $(q-1)$-th roots of unity. Then
    \[
    \begin{aligned}
    {\rm disc}(x^q-x)&= (-1)^{q(q-1)/2}\Res(x^q-x,qx^{q-1}-1)\\
    &=(-1)^{q(q-1)/2}\cdot(-1)\cdot\prod_{k=1}^{q-1}(q \zeta_k^{q-1}-1) = -(-1)^{q(q-1)/2}(q-1)^{q-1}.
    \end{aligned}
    \]
    Also, we have  $\Res (x-1,t_q(x))^2=t_q(1)^2=(q-1)^2$. Hence
    \[
    \disc (t_q(x))=-(-1)^{q(q-1)/2}(q-1)^{q-3},
    \] and thus
    \[
    \Res(t_q(x),t'_q(x))=(-1)^{(q-1)(q-2)/2} {\rm disc}(t_q(x))= -(q-1)^{q-3}.
    \]
    \item Let $\zeta_k$, $k=1,\ldots, q$, be all the $q$-th roots of unity, where $\zeta_q=1$. Then
    \[
    \Res(s_q(x),t_q(x))=\prod_{k=1}^n t_q(\zeta_k)=(q-1)  \prod_{k=1}^{q-1} \dfrac{\zeta_k^q-\zeta_k}{\zeta_k-1}=(q-1)\prod_{k=1}^{q-1} \dfrac{1-\zeta_k}{\zeta_k-1}=q-1.
    \qedhere
    \]
\end{enumerate}
\end{proof}

\begin{prop}
Over $\F_q$, the polynomial $R_q(y)$ factors as $R_q(y)=y^{2q-2}$.
\end{prop}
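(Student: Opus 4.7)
The plan is to reduce the resultant computation over $\F_q$ to evaluating $a(x,y)$ at a single explicit root of $u_q$. The key input is \cref{lem:u_mod_q}, which tells us that over $\F_q$ the polynomial $u_q(x)$ factors completely as $(x-1)^{2q-2}$. Once we know all roots of $u_q$ over $\overline{\F}_q$ (each equal to $1$, with total multiplicity $2q-2$), the resultant in terms of roots will collapse to a single evaluation raised to a power.

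First I would verify that $u_q$ is monic of degree $2q-2$ by reading off the leading term from the explicit expression
\[
u_q(x) = \sum_{1\le i\le q-1}(q-i)x^{q-1+i} + \sum_{1\le i \le q-1} i x^{i-1},
\]
where the $i=q-1$ term in the first sum contributes $1 \cdot x^{2q-2}$. Since $u_q$ is monic, the formula
\[
\Res_x(u_q(x), a(x,y)) = \prod_{\alpha : u_q(\alpha)=0} a(\alpha, y)
\]
(with multiplicity) applies cleanly, with no leading-coefficient correction. Combined with $u_q \equiv (x-1)^{2q-2} \pmod q$, this gives
\[
R_q(y) \equiv a(1,y)^{2q-2} \pmod{q}.
\]

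Finally I would compute $a(1,y) = s_q(1) - y\, t_q(1) = 0 - y(q-1) = -(q-1)y$, which modulo $q$ is simply $y$. Substituting, we get $R_q(y) \equiv y^{2q-2} \pmod q$, as claimed. There is no real obstacle here: the main content of the argument is already packaged in \cref{lem:u_mod_q}, and the remaining calculation is a direct application of the product formula for the resultant together with a trivial evaluation at $x=1$.
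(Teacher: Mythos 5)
Your proposal is correct and follows essentially the same route as the paper: both arguments reduce $R_q(y) \bmod q$ via \cref{lem:u_mod_q} to $a(1,y)^{2q-2} = \bigl(-(q-1)y\bigr)^{2q-2} \equiv y^{2q-2} \pmod q$, the only cosmetic difference being that you invoke the product-over-roots formula for the resultant (justified by monicity of $u_q$) where the paper uses the multiplicativity $\Res(AB,C)=\Res(A,C)\Res(B,C)$ applied to $(x-1)^{2q-2}$. No gaps.
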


\begin{proof}
Using the property that $\Res(AB, C)=\Res(A,C) \Res(B,C)$ and using \cref{lem:u_mod_q} which says that $u_q(x)=(x-1)^{2q-2}$ over $\F_q$, we have 
\begin{align*}
    R_q(y) = \Res(a(x,y), u_q(x)) &= \Res(a(x,y), (x-1)^{2q-2}) = \left( \Res(a(x,y), x-1) \right)^{2q-2} \\
    &=a(1,y)^{2q-2} = (q-1)^{2q-2} y^{2q-2}=y^{2q-2} \pmod q.
\qedhere
 \end{align*}
\end{proof}

\begin{prop}
    The polynomial $R_q(y)$ is an even polynomial of degree $2q-2$, i.e., it is a polynomial in $y^2$. It has leading coefficient $-(q-1)^{q-2}$ and constant coefficient $(q-1)q^{q}$.
\end{prop}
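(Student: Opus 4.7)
The plan is to work directly with the product formula for the resultant. Since $u_q(x)$ is monic of degree $2q-2$ in $x$, we have
\[
R_q(y) \;=\; \Res_x\!\left(u_q(x),\, a(x,y)\right) \;=\; \prod_{\alpha : u_q(\alpha) = 0} a(\alpha, y) \;=\; \prod_{\alpha} \bigl(s_q(\alpha) - y\, t_q(\alpha)\bigr).
\]
Each factor is linear in $y$, so $R_q(y)$ has degree at most $2q-2$ in $y$, with the coefficient of $y^{2q-2}$ equal to $\prod_\alpha t_q(\alpha) = \Res(u_q, t_q)$ and the constant coefficient equal to $\prod_\alpha s_q(\alpha) = \Res(u_q, s_q)$.

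Next I would compute these two resultants by evaluating $u_q$ at the roots of $s_q$ and $t_q$ respectively. For $\Res(u_q, s_q)$: at $\zeta = 1$ we have $u_q(1) = \Phi_q(1)^2 - q = q(q-1)$, while at any nontrivial $q$-th root of unity $\zeta$ we have $\Phi_q(\zeta) = 0$, whence $u_q(\zeta) = -q\,\zeta^{q-1}$. Since the product of the nontrivial $q$-th roots of unity equals $(-1)^{q-1}\Phi_q(0) = 1$ (as $q$ is odd), one obtains $\Res(u_q, s_q) = q(q-1) \cdot (-q)^{q-1} = (q-1)q^q$. For $\Res(u_q, t_q)$: the roots of $t_q(x) = x(x^{q-1}-1)/(x-1)$ are $0$ and the $(q-1)$-th roots of unity distinct from $1$. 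At $\beta = 0$, $u_q(0) = 1$; at a nontrivial $(q-1)$-th root of unity $\beta$, we have $\Phi_q(\beta) = 1 + t_q(\beta) = 1$ and hence $u_q(\beta) = 1 - q$. So $\Res(u_q, t_q) = (1-q)^{q-2} = -(q-1)^{q-2}$, using that $q-2$ is odd. In particular the leading coefficient is nonzero, so $\deg_y R_q = 2q-2$ exactly.

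The remaining claim — that $R_q$ is even in $y$ — is established by exploiting the palindromic structure of $u_q$. The identity $\Phi_q(1/x) = x^{-(q-1)}\Phi_q(x)$ yields $x^{2q-2}\, u_q(1/x) = u_q(x)$, so the roots of $u_q$ come in reciprocal pairs $\{\alpha, 1/\alpha\}$; these pairs are genuine since $u_q(1) = q(q-1) \neq 0$ and $u_q(-1) = \Phi_q(-1)^2 - q = 1 - q \neq 0$, ruling out the fixed points $\pm 1$. The direct identities $s_q(1/\alpha) = -s_q(\alpha)/\alpha^q$ and $t_q(1/\alpha) = t_q(\alpha)/\alpha^q$ combine to give $a(1/\alpha, y) = -a(\alpha, -y)/\alpha^q$. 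Pairing factors in the product then yields
\[
a(\alpha, y)\, a(1/\alpha, y) \;=\; -\alpha^{-q}\, a(\alpha, y)\, a(\alpha, -y),
\]
and the quantity $a(\alpha, y) a(\alpha, -y) = s_q(\alpha)^2 - y^2\, t_q(\alpha)^2$ is manifestly even in $y$. Multiplying over all reciprocal pairs therefore gives $R_q(-y) = R_q(y)$. The principal difficulty lies in setting up this parity argument cleanly; once the reciprocal symmetry of $u_q$ is identified and the $\pm 1$ cases are excluded, the rest reduces to the two explicit resultant calculations at roots of unity.
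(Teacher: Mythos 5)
Your proposal is correct and follows the same overall architecture as the paper: express $R_q(y)=\prod_{\alpha} a(\alpha,y)$ over the roots of the monic polynomial $u_q$, identify the leading and constant coefficients as $\Res(u_q,t_q)$ and $\Res(u_q,s_q)$, and deduce evenness from the reciprocal symmetry of $u_q$ via $a(1/x,y)=-x^{-q}a(x,-y)$, so that each reciprocal pair contributes the even factor $s_q(\alpha)^2-y^2t_q(\alpha)^2$ up to a constant. Where you genuinely diverge is in evaluating the two resultants. The paper reduces them through the Wronskian identity $u_q=s_q't_q-s_qt_q'$, so that $u_q\equiv -s_qt_q'\pmod{t_q}$ and $u_q\equiv s_q't_q\pmod{s_q}$, and then invokes its \cref{lem:resultants_sq_tq} for the values of $\Res(t_q,s_q)$, $\Res(t_q,t_q')$ and $\Res(s_q,s_q')$ — the last two of which require a small discriminant computation. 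You instead use the closed form $u_q=\Phi_q^2-qx^{q-1}$ and evaluate it directly at the roots of $s_q$ (where $\Phi_q$ vanishes except at $1$) and of $t_q$ (where $\Phi_q\equiv 1$), which bypasses that lemma entirely and is arguably more elementary; the trade-off is that the paper's lemma is reused elsewhere, whereas your route is self-contained for this proposition. A minor point in your favor: you explicitly verify $u_q(\pm 1)\neq 0$ before pairing roots reciprocally, a step the paper leaves implicit when it asserts $z_iz_{2q-1-i}=1$. All of your individual computations (the values $u_q(0)=1$, $u_q(\beta)=1-q$, $u_q(1)=q(q-1)$, $u_q(\zeta)=-q\zeta^{q-1}$, and the parity bookkeeping with $q-1$ even and $q-2$ odd) check out and reproduce the stated coefficients $-(q-1)^{q-2}$ and $(q-1)q^q$.
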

\begin{proof}
We first note that 
\begin{align*}
a\left(1/x, y \right) &= s_q \left(1/x \right)-y t_q \left(1/x \right) 
=-x^{-q} \left( s_q(x)+yt_q(x) \right). 
\end{align*}
and hence
$x^q a\left(1/x, y \right) a(x,y) = (y^2 t_q(x)^2-s_q(x)^2)$ is a polynomial in the variables $x, y^2$.
Let $z_1, z_2, \ldots, z_{2q-2}$ be the roots of $u_q$. Since $u_q$ is a reciprocal polynomial, we can assume further that $z_{i} z_{2q-1-i} =1$. Thus we have
\begin{align*}
    R_q(y) = \Res(a(x,y), u_q(x)) = \prod_{i=1}^{2q-2} a(z_i,y) =\prod_{i=1}^{q-1} \left( a(z_i,y) a\left(1/z_i, y\right) \right)
\end{align*}
which shows that $R_q$ is an even polynomial. The formula 
\[ R_q(y) = \prod_{i=1}^{2q-2} a(z_i,y) = \prod_{i=1}^{2q-2} (s_q(z_i)-y t_q(z_i))\]
also shows that the leading coefficient of $R_q$ is $\prod\limits_{i=1}^{2q-2} t_q(z_i) = \Res(t_q(x), u_q(x))$, and the constant coefficient of $R_q$ is $\prod\limits_{i=1}^{2q-2} s_q(z_i) =\Res(s_q(x), u_q(x)).$ We can compute them using \cref{lem:resultants_sq_tq}. We compute the leading coefficient as follows 
\begin{align*}
 \Res(t_q(x), u_q(x)) &= \Res(t_q(x), s_q'(x)t_q(x)-s_q(x)t_q'(x)) = \Res(t_q(x), -s_q(x)t_q'(x)) \\ 
 &=  \Res(t_q(x), s_q(x)) \Res(t_q(x), t_q'(x))= -(q-1)^{q-2}. 
\end{align*}
Similarly, the constant coefficient of $R_q$ is $(q-1)q^q.$
\end{proof}

\section{The case $n=3p$}
\label{sec:3p}

In this section, we focus on the special case $n=3p$ for some prime $p > 3$. We note that the set $S_{3p}$ described in \cref{eq:S_n} can be rewritten explicity. 
\[ S_{3p}=\begin{cases} 
 \{d\in \N\mid d>1, d\not=3, d\mid p-1\}\cup \{8\} &\text{ if } p\equiv 1\mod {12}\\
 \{d\in \N\mid d>1, d\mid p-1\}\cup \{8\} &\text{ if } p\equiv 5\mod {12}\\
  \{d\in \N\mid d>1,d\not=3, d\mid p-1\} &\text{ if } p\equiv 7\mod {12}\\
 \{d\in \N\mid d>1, d\mid p-1\} &\text{ if } p\equiv 11\mod {12}.
\end{cases} \]
Accordingly, we have the explicit description
\begin{align}
\label{eq:explicit_F3p}
\begin{split}
F_{3p}(x)&=f_{3p}(x)\cdot x\cdot \prod_{d\in S_{3p}}\Phi_d(x)
\\&=\begin{cases}
f_{3p}(x)x\dfrac{x^{p-1}-1}{x^3-1} &\text{ if } p\equiv 1,7,19\pmod {24}\\
f_{3p}(x)x\dfrac{x^{p-1}-1}{x^3-1}\Phi_8(x) &\text{ if } p\equiv 13\pmod {24}\\
f_{3p}(x)x\dfrac{x^{p-1}-1}{x-1}\Phi_8(x) &\text{ if } p\equiv 5\pmod {24}\\
f_{3p}(x)x\dfrac{x^{p-1}-1}{x-1} &\text{ if } p\equiv 11,17,23\pmod {24}.\\
\end{cases}
\end{split}
\end{align}
\begin{prop}
\label{prop:explicit_f3p}
Let $p > 3$ be a prime. Then the polynomial $f_{3p}$ has the explicit formula
\[
f_{3p}(x)=\begin{cases}
x^{2p+2}+x^{2p+1}+x^{p+2}+x^p+x+1 & \text{ if } p\equiv 1,7,19\pmod {24}\\
\dfrac{x^{2p+2}+x^{2p+1}+x^{p+2}+x^p+x+1}{x^4+1} & \text{ if } p\equiv 13\pmod {24}\\
\dfrac{x^{2p+2}+x^{2p+1}+x^{p+2}+x^p+x+1}{(x^2+x+1)(x^4+1)} & \text{ if } p\equiv 5\pmod {24}\\
\dfrac{x^{2p+2}+x^{2p+1}+x^{p+2}+x^p+x+1}{x^2+x+1} & \text{ if } p\equiv 11, 17, 23\pmod {24}.
\end{cases}
\]
and 
\[
\deg f_{3p}=\begin{cases}
2p+2 & \text{ if } p\equiv 1,7,19\pmod {24}\\
2p-2 & \text{ if } p\equiv 13\pmod {24}\\
2p-4 & \text{ if } p\equiv 5\pmod {24}\\
2p & \text{ if } p\equiv 11, 17, 23\pmod {24}.
\end{cases}
\]
\end{prop}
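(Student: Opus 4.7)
The plan is to first establish the polynomial identity
\[ (x^3-1) F_{3p}(x) = x \cdot N(x) \cdot (x^{p-1}-1), \]
where $N(x) = x^{2p+2}+x^{2p+1}+x^{p+2}+x^p+x+1$, and then extract $f_{3p}$ in each of the four residue classes of $p \pmod{24}$ by combining this identity with \cref{eq:explicit_F3p}. To prove the identity, I would compute $F_{3p}(x)$ by inclusion-exclusion: since $\gcd(a, 3p) = 1$ is equivalent to $3 \nmid a$ together with $a \notin \{p, 2p\}$, we get
\[ F_{3p}(x) = \sum_{\substack{1 \leq a \leq 3p-1 \\ 3 \nmid a}} x^a - x^p - x^{2p} = \frac{x(1+x)(x^{3p}-1)}{x^3-1} - x^p - x^{2p}. \]
Clearing denominators and expanding both $(x^3-1) F_{3p}(x)$ and $x N(x) (x^{p-1}-1)$ yields the same polynomial $x^{3p+2}+x^{3p+1}-x^{2p+3}+x^{2p}-x^{p+3}+x^p-x^2-x$, establishing the identity.

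Next I would combine this identity with \cref{eq:explicit_F3p}. If $p \equiv 1 \pmod 3$, then $(x^{p-1}-1)/(x^3-1)$ appears directly as a factor in \cref{eq:explicit_F3p}, so we read off $f_{3p}(x) = N(x)$ when $\Phi_8$ is absent from the cyclotomic product (case $p \equiv 1, 7, 19 \pmod{24}$) and $f_{3p}(x) = N(x)/\Phi_8(x)$ when $\Phi_8$ is present (case $p \equiv 13 \pmod{24}$). If instead $p \equiv 2 \pmod 3$, then the factor appearing in \cref{eq:explicit_F3p} is $(x^{p-1}-1)/(x-1)$, and since $x^3-1 = (x-1)(x^2+x+1)$, an extra factor of $x^2+x+1$ must be absorbed by $N$; this gives $f_{3p}(x) = N(x)/(x^2+x+1)$ (case $p \equiv 11, 17, 23 \pmod{24}$) or $f_{3p}(x) = N(x)/[(x^2+x+1)\Phi_8(x)]$ (case $p \equiv 5 \pmod{24}$).

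Finally I would verify that the indicated quotients actually lie in $\Z[x]$: that $x^2+x+1$ divides $N$ whenever $p \equiv 2 \pmod 3$, and that $\Phi_8(x) = x^4+1$ divides $N$ whenever $p \equiv 5 \pmod 8$. Each check amounts to reducing the six exponents of $N$ modulo $3$ or modulo $8$ and observing the obvious cancellation. The degree formulas then follow immediately from $\deg N = 2p+2$ and the degrees of the removed factors. The proof is largely mechanical once the central identity is in hand, and the only real care needed is to track the four cases of \cref{eq:explicit_F3p} simultaneously without confusing the subtle shift in the denominator between $x^3-1$ and $x-1$ depending on whether $3 \mid p-1$.
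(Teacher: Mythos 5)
Your proposal is correct and follows essentially the same route as the paper, whose entire proof consists of the identity $(x^3-1)F_{3p}(x) = (x^p-x)(x^{2p+2}+x^{2p+1}+x^{p+2}+x^p+x+1)$ — exactly your central identity with $x^p - x = x(x^{p-1}-1)$ — after which $f_{3p}$ is read off from the case analysis in the displayed factorization of $F_{3p}$. Your added details (the inclusion--exclusion derivation of the identity and the explicit checks that $x^2+x+1$ and $x^4+1$ divide $N$ in the relevant congruence classes) simply make explicit what the paper leaves to the reader.
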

\begin{proof}
The statements follow from noting that
\begin{align*}
(x^3-1)F_{3p} &= x^{3p+2}+x^{3p+1}+x^{2p}+x^p- (x^{2p+3}+x^{p+3}+x^2+x)\\
&= (x^p-x)(x^{2p+2}+x^{2p+1}+x^{p+2}+x^p+x+1).
\end{align*}
\end{proof}
It is a classical theorem that for any distinct odd primes $p, q$, the coefficients of the cyclotomic polynomial $\Phi_{pq}$ are in $\{0, -1, 1 \}$ (see \cite{brookfield2016coefficients}). The smallest integer $n$ such that $\Phi_n$ has a coefficient not contained in $\{0,-1, 1\}$ is $n=105$. Along these lines, we observe that the coefficients of $f_{3p}$ are quite small. In fact, for $p<1200$, we verify using SageMath that the coefficients of $f_{3p}$ are in the set $\{-2, -1, 0 , 1, 2 \}$. In fact this is true for all $p$ as the next theorem shows.

\begin{thm}
\label{thm:coeff_3p}
Let $p > 3$ be a prime. The coefficients of $f_{3p}$ are in the set $\{-2, -1, 0, 1, 2 \}$.
\end{thm}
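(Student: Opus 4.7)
The plan is a case analysis on $p \bmod 24$ using the explicit formulas in \cref{prop:explicit_f3p}, which in every case write $f_{3p}(x) = N(x)/D(x)$ where $N(x) = x^{2p+2}+x^{2p+1}+x^{p+2}+x^p+x+1$ is a six-term polynomial with coefficients in $\{0,1\}$ and $D(x)$ is one of $1$, $\Phi_3 = x^2+x+1$, $\Phi_8 = x^4+1$, or $\Phi_3 \Phi_8$. The case $D=1$ (when $p \equiv 1, 7, 19 \pmod{24}$) is immediate.

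For the remaining three cases I would expand $1/D(x) = \sum_{n \geq 0} h_n x^n$ as a formal power series. Because $D$ is a product of cyclotomic polynomials (with all roots simple and on the unit circle), the sequence $(h_n)$ is purely periodic with period $T$ equal to $3$, $8$, or $24$ respectively, and a direct recurrence shows $|h_n| \leq 2$ throughout one period. The identity $f_{3p}(x) = N(x) \cdot \sum_n h_n x^n$ (as formal power series, since $f_{3p}$ is a polynomial) then yields the closed form
\[ f_{3p,i} = h_i + h_{i-1} + h_{i-p} + h_{i-p-2} + h_{i-2p-1} + h_{i-2p-2}, \]
with the convention $h_m = 0$ for $m < 0$.

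Since $p$ is fixed modulo $T$ in each case (for example $p \equiv 5 \pmod{24}$ in the hardest subcase), the shifts $i-p, i-p-2, i-2p-1, i-2p-2$ modulo $T$ depend only on $i \bmod T$. Thus the bound $|f_{3p,i}| \leq 2$ reduces to a finite check across the $T$ residue classes of $i$, in two or three ``zones'' depending on which of the six $h$-terms are active: the interior zone (four or six active terms, forming a periodic sum), the low-degree zone $i < p$ (only two terms $h_i + h_{i-1}$), and the edge $i \in \{p, p+1\}$ (three active terms). For $D = \Phi_3\Phi_8$ one period of $(h_n)$ is $(1,-1,0,1,-2,1,1,-2,2,0,-2,2,-1,-1,2,-1,0,1,-1,0,0,0,0,0)$, and tabulating the four-term sum $h_i + h_{i-1} + h_{i-5} + h_{i-7}$ across $i \bmod 24$ yields values in $\{-2,-1,0,1,2\}$; the analogous checks for periods $3$ and $8$ are even shorter.

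The main technical obstacle is the period-$24$ case $D = \Phi_3 \Phi_8$, which requires the largest amount of bookkeeping but remains a routine finite verification once the period $(h_0, \ldots, h_{23})$ is computed. No structural ingredient beyond the periodicity of $(h_n)$ and the sparsity of $N$ is needed.
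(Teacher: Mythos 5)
Your proposal is correct, and it takes a genuinely different route from the paper's proof. The paper argues case by case from the quotient side: for $p\equiv 13\pmod{24}$ it divides out $x^4+1$ by explicitly pairing the six monomials of the numerator into three telescoping sums; for $p\equiv 2\pmod 3$ it constructs the quotient $g=N/\Phi_3$ with explicit coefficients $b_k$, and in the hardest case $p\equiv 5\pmod{24}$ it uses the relation $b_k=a_k+a_{k-4}$ to show that the coefficient sequence of $f_{3p}$ itself satisfies $a_k=a_{k-24}$ for $k\leq p-1$, computes one period by hand, and then invokes the reciprocal symmetry $a_k=a_{2p-4-k}$ to cover the upper half. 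You instead invert the denominator once and for all as a purely periodic power series $1/D=\sum h_nx^n$ (which is legitimate since each $D$ divides $x^{24}-1$; note that ``simple roots on the unit circle'' alone would not give periodicity, but divisibility of $x^{24}-1$ does), convolve with the sparse numerator to get a closed form for every coefficient, and reduce to a finite table over residues modulo $T$ with a short zone analysis at the boundaries. I verified your period $(1,-1,0,1,-2,1,1,-2,2,0,-2,2,-1,-1,2,-1,0,1,-1,0,0,0,0,0)$ and the resulting four-term sums $h_i+h_{i-1}+h_{i-5}+h_{i-7}$, which indeed all lie in $\{-2,\ldots,2\}$; and since $\deg f_{3p}=2p-4<2p+1$, the two terms $h_{i-2p-1},h_{i-2p-2}$ never activate, so your four-term interior sum is the right one. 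Your method is more uniform (one template handles all four congruence classes and needs neither the intermediate polynomial $g$ nor the reciprocity of $f_{3p}$), whereas the paper's yields the explicit coefficient patterns of $f_{3p}$ as a byproduct.
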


\begin{proof}
Suppose that $p \equiv 1 \pmod{3}$. If $p\equiv 1,7,19\pmod {24}$, the statement is clear from \cref{prop:explicit_f3p}.
If $p\equiv 13\pmod {24}$, say $p=13+24a$ for some $a\in \N$, then
\[
\begin{aligned}
x^{2p+2}+1 &= (x^4)^{7+12a}+1=(x^4+1)\sum_{k=0}^{6+12a}(-1)^k x^{4k}\\
x^{2p+1}+x^{p+2} &= x^{p+2} [(x^4)^{3+6a}+1] = (x^4+1) \sum_{k=0}^{2+6a}(-1)^k x^{4k+15+24a}\\
x^{p}+x&= x[(x^4)^{3+6a}+1]=(x^4+1)\sum_{k=0}^{2+6a}(-1)^k x^{4k+1}.
\end{aligned}
\]
Hence 
\[
f_{3p}(x)=\sum_{k=0}^{6+12a}(-1)^k x^{4k} +\sum_{k=0}^{2+6a}(-1)^k x^{4k+15+24a}+\sum_{k=0}^{2+6a}(-1)^k x^{4k+1}
\]
which shows that all the coefficients of $f_{3p}$ are in $\{-1,0,1\}$.

Now suppose that $p \equiv 2 \pmod 3$. Write $p=2+3a$ for some $a\in \N$. Let 
\[g(x)=\sum_{k=a+1}^{2a+1}x^{3k+1}-\sum_{k=a+1}^{2a}x^{3k+2}+\sum_{k=0}^{a}x^{3k}-\sum_{k=0}^{a-1}x^{3k+2}.\]
It is straightforward to check that
\[
\begin{aligned}
(x^2+x+1)g(x)&=x^{6a+6}+x^{6a+5}+x^{3a+4}+x^{3a+2}+x+1\\
&=x^{2p+2}+x^{2p+1}+x^{p+2}+x^p+x+1.
\end{aligned}
\]
Now if $p\equiv 11, 17, 23\pmod {24}$, then \cref{prop:explicit_f3p} says that $f_{3p}(x)=g(x)$, and so all the coefficients of $f_{3p}$ are in $\{-1,0,1\}$.
The last remaining case is $p\equiv 5\pmod {24}$. If we write $g(x)=\sum\limits_{k=0}^{2p}b_kx^k$, then the coefficients $b_k$ are given by
\[
b_k= \begin{cases} 1 &
\text{ if $k\equiv 1\pmod 3$ and $p+2\leq k\leq 2p$}\\
-1 &\text{ if $k\equiv 2\pmod 3$, ~$k \leq 2p$ and $k\not=p$}\\
1& \text{ if $k\equiv 0\pmod 3$ and $0\leq k\leq p-2$}\\
0 &\text{ otherwise}
\end{cases}
\]
In particular we have $b_k=b_{k'}$ whenever $k\equiv k'\pmod 3$ and  $0\leq k,k'\leq p-1$. 
\cref{prop:explicit_f3p} says that in this case we have $f_{3p}(x)(x^4+1)=g(x)$. If we write $f_{3p}(x)=\sum\limits_{k=0}^{2p-4}a_kx^k$, then we get that the coefficients of $g$ and $f_{3p}$ are related by
\begin{align*}
b_k = \begin{cases} a_k \quad &\text{ if } \quad 0 \leq k \leq 3\\
a_k+a_{k-4} \quad & \text{ if } \quad 4 \leq k \leq 2p-4\\
a_{k-4} \quad &\text{ if } \quad 2p-3 \leq k \leq 2p. \end{cases}
\end{align*}
For $24 \leq k \leq p-1$, using the recursive formula $b_k = a_k + a_{k-4}$ repeatedly, we have
\begin{align*}
a_{k} - a_{k-24} &= b_{k} - b_{k-4} + b_{k-8} - b_{k-12} + b_{k-16} - b_{k-20}\\
&=(b_{k}-b_{k-12})-(b_{k-4}-b_{k-16})+(b_{k-8}-b_{k-20})=0.
\end{align*}
showing that the sequence $a_0,a_1,\ldots,a_{p-1}$ is periodic with a period $24$. It is straightforward to check that the sequence $a_0,a_1,\ldots,a_{23}$ is
\[
1,0,-1,1,-1,-1,2,-1,0,2,-2,0,1,-2,1,1,-1,1,0,-1,0,0,0,0.
\]
Hence $a_k\in \{-2,-1,0,1,2\}$ for $0\leq k \leq p-1$. Since $f_{3p}$ is reciprocal, we know that $a_{k}=a_{2p-4-k}$ and so $a_k\in \{-2,-1,0,1,2\}$ for all $0 \leq k \leq 2p-4$.
\end{proof}

\begin{cor}
Let $a_{\frac{\deg f_{3p}}{2}}$ be the middle coefficient of $f_{3p}$. Then
\[
a_{\frac{\deg f_{3p}}{2}}
=\begin{cases}
0 & \text{ if } p\equiv 1,7,11,17,19,23 \pmod {24}\\
1 & \text{ if } p\equiv 5 \pmod {24}\\
-1 & \text{ if } p\equiv 13 \pmod {24}.
\end{cases}
\]
\qed
\end{cor}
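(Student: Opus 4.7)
The plan is to read off the middle coefficient directly from the explicit formulas for $f_{3p}$ established in \cref{prop:explicit_f3p} and in the proof of \cref{thm:coeff_3p}. In each congruence class modulo $24$, the degree of $f_{3p}$ determines the middle index, and it suffices to check which monomials of the explicit expansion land at that index.

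For $p\equiv 1,7,19\pmod{24}$ we have $\deg f_{3p}=2p+2$, so the middle index is $p+1$. Since this value does not appear among the exponents $\{0,1,p,p+2,2p+1,2p+2\}$ in \cref{prop:explicit_f3p}, the middle coefficient is $0$. For $p\equiv 13\pmod{24}$, I would write $p=13+24a$ so that the middle index is $p-1=12+24a$, and use the three-sum expansion of $f_{3p}$ from the proof of \cref{thm:coeff_3p}: the equations $4k+15+24a=12+24a$ and $4k+1=12+24a$ have no non-negative integer solutions, while $4k=12+24a$ forces $k=3+6a$, which is odd, giving the sign $(-1)^{3+6a}=-1$. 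For $p\equiv 11,17,23\pmod{24}$, writing $p=2+3a$, the middle index is $p=2+3a$; in the four sums defining $g$, the candidate indices $k=a$ in the second and fourth sums lie just outside their prescribed ranges $[a+1,2a]$ and $[0,a-1]$, while the other two sums contribute only exponents $\equiv 1$ or $0\pmod 3$, so the coefficient is $0$.

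The case $p\equiv 5\pmod{24}$ is the least immediate, and is the only one where the formula for $f_{3p}$ is not directly term-by-term. Here $\deg f_{3p}=2p-4$ and the middle index is $p-2\equiv 3\pmod{24}$. The proof of \cref{thm:coeff_3p} already established periodicity of $(a_k)_{0\le k\le p-1}$ with period $24$ and tabulated the first $24$ values explicitly; reading off position $3$ yields $a_{p-2}=a_3=1$. No substantive obstacle arises in any of the four cases; the entire verification reduces to careful parity and index-range bookkeeping using results already in place. The one place to exercise a bit of care is confirming that in the $p\equiv 13\pmod{24}$ case the index $k=3+6a$ is indeed odd, and that in the $p\equiv 5\pmod{24}$ case the middle index $p-2$ reduces to $3$ (and not some other residue) modulo $24$.
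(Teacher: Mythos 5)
Your proposal is correct and matches the paper's intended argument: the corollary is stated with a \qed precisely because it is read off from the explicit formula for $f_{3p}$ in \cref{prop:explicit_f3p} and the expansions and period-$24$ table computed in the proof of \cref{thm:coeff_3p}, which is exactly the index bookkeeping you carry out. All four case checks (including the parity of $k=3+6a$ and the reduction $p-2\equiv 3\pmod{24}$) are accurate.
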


Next, we study more properties of $f_{3p}$ in characteristic $p$. We start with the following theorem which is an improved version of \cref{prop:roots_over_fp} in the special case $n = 3p$. 

\begin{thm}
\label{thm:3p_multiplicity_mod_p}
Let $p>3$ be a prime.
Let $x_0\in \overline{\F}_p$ be a root of $F_{3p}$.
\begin{enumerate}
    \item The multiplicity of $x_0$ as a root of $F_{3p}$ is at most $2$.
    \item The multiplicity of $x_0$ as a root of $F_{3p}$ is $2$ if and only if $x_0 \in \F_p$ and $x_0$ is a root of 
    $u_3(x)=x^4+2x^3+2x+1$.
\end{enumerate}
\end{thm}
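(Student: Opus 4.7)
My plan is to specialize \cref{prop:qp_multiplicity_mod_p} and \cref{prop:repeated_root_Res} to $q=3$, where $u_3(x) = \Phi_3(x)^2 - 3x^2 = x^4 + 2x^3 + 2x + 1$, and then carry out a case analysis depending on whether $3$ is a quadratic residue modulo $p$. For Part (1), by \cref{prop:qp_multiplicity_mod_p}(b) it suffices to show $u_3$ is separable modulo $p$ for $p > 3$. Using the reciprocal structure $u_3(x) = x^2 \widetilde{g}(x + x^{-1})$ with $\widetilde{g}(y) = y^2 + 2y - 2$, a repeated root $x_0$ would force either $x_0 = \pm 1$ (impossible since $u_3(\pm 1) \in \{6,-2\}$ for $p > 3$) or a repeated root of $\widetilde{g}$ (impossible since $\disc(\widetilde{g}) = 12$ is invertible mod $p$). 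Equivalently, one computes $\disc(u_3) = -2^6 \cdot 3^3$.

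For Part (2), direction $(\Leftarrow)$, assume $x_0 \in \F_p$ with $u_3(x_0) = 0$. A quick verification confirms $u_3$ is nonzero at $0$, $\pm 1$, and the primitive cube roots of unity in $\overline{\F}_p$ (for $p>3$), so in particular $x_0^3 \neq 1$. Substituting $x_0$ into the identity $F_{3p}(x) \equiv (x^3-1)^{p-1} F_3(x) - F_3(x)^p \pmod{p}$ from the proof of \cref{prop:qp_multiplicity_mod_p} and applying Fermat's little theorem to $x_0 \in \F_p$ yields $F_{3p}(x_0) \equiv 1 \cdot F_3(x_0) - F_3(x_0) = 0 \pmod{p}$. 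Then \cref{prop:qp_multiplicity_mod_p}(a) together with separability of $u_3$ forces $\mult_{x_0}(F_{3p}) = 1 + \mult_{x_0}(u_3) = 2$.

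For the main direction $(\Rightarrow)$, suppose $\mult_{x_0}(F_{3p}) = 2$. Then \cref{prop:qp_multiplicity_mod_p}(a) gives $u_3(x_0) = 0$, and following the derivation in \cref{prop:repeated_root_Res} produces some $\mu = (x_0^3 - 1)/F_3(x_0) \in \F_p^{\times}$. Using $\Phi_3(x_0)^2 = 3x_0^2$, write $\Phi_3(x_0) = \epsilon \sqrt{3}\, x_0$ for some sign $\epsilon \in \{\pm 1\}$ and choice $\sqrt{3} \in \overline{\F}_p$, so that the defining equation for $\mu$ simplifies to
\[ \mu(x_0 + 1) = \epsilon \sqrt{3}\, (x_0 - 1). \]
If $3$ is a square modulo $p$ then $\sqrt{3} \in \F_p$ and this relation immediately gives $x_0 \in \F_p$. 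In the hard case that $3$ is a non-square, Frobenius $\sigma$ fixes $\mu$ but sends $\sqrt{3}$ to $-\sqrt{3}$; applying $\sigma$ to the displayed identity and combining with the original yields, after short algebra, $x_0^{p+1} = 1$. Consequently $x_0 \in \F_{p^2}$ with $x_0^{-1} = x_0^p$, so $y_0 := x_0 + x_0^{-1} = x_0 + x_0^p \in \F_p$. But $u_3(x_0) = 0$ forces $y_0^2 + 2y_0 - 2 = 0$, whose roots $-1 \pm \sqrt{3}$ lie outside $\F_p$ in this case, a contradiction. Hence $3$ must be a square and $x_0 \in \F_p$.

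The hard part will be precisely this non-square case in $(\Rightarrow)$: seeing that the Frobenius-conjugate of the key identity encodes the norm-one condition $x_0^{p+1} = 1$, and then ruling out such a repeated root by descending to a trace $y_0 \in \F_p$ that cannot actually lie in $\F_p$, constitutes the non-obvious core of the argument.
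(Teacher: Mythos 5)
Your proof is correct, and while Part (1) and the ``if'' direction of Part (2) coincide with the paper's argument (discriminant of $u_3$, plus \cref{prop:qp_multiplicity_mod_p}(a)), your ``only if'' direction takes a genuinely different route. The paper computes the resultant $R_3(\mu)=-2\mu^4+36\mu^2+54$ explicitly, rewrites $R_3(\mu)=0$ as $108=(\mu^2-9)^2$ to force $\legendre{3}{p}=1$, then factors $u_3=(x^2+(1+c)x+1)(x^2+(1-c)x+1)$ over $\F_p$ and excludes a degree-two minimal polynomial for $x_0$ by a Vieta argument on the cubic $a_3(x,\mu)$ (its third root would be $1$, forcing $\mu=0$). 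You instead extract the single linear relation $\mu(x_0+1)=\epsilon\sqrt{3}\,(x_0-1)$ directly from $u_3(x_0)=0$ and $a_3(x_0,\mu)=0$, which settles the case $\legendre{3}{p}=1$ at once, and you dispose of the non-residue case by applying Frobenius to that relation to get $x_0^{p+1}=1$ and then descending to the trace $y_0=x_0+x_0^{-1}\in\F_p$, which would have to satisfy $y_0^2+2y_0-2=0$ — impossible when $\sqrt{3}\notin\F_p$. Your version is more conceptual: it makes transparent why the condition $\legendre{3}{p}=1$ appears (it is exactly $\disc(\widetilde g)=12$ being a square) and avoids computing $R_3$ altogether; the paper's resultant-based template, on the other hand, is the one that scales to the much harder $n=5p$ case in \cref{sec:5p}. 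The only points you pass over lightly — that the linear relation cannot degenerate (the coefficient $\mu-\epsilon\sqrt 3$ vanishing would force $\mu=0$), and that $x_0\neq 1$, $x_0^p \neq 1$ so the cross-ratio manipulation is legitimate — are both immediate from $u_3(1)=6\neq 0$ and $\mu\in\F_p^{\times}$, so there is no real gap.
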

\begin{proof}
We have 
${\rm disc}(u_3)=-1728=-2^6 \times 3^3 \neq 0 \pmod{p}$.
So the first statement follows from \cref{prop:qp_multiplicity_mod_p} Part (b).

The `if' part of the second statement follows from \cref{prop:qp_multiplicity_mod_p} Part (a). Now we discuss the `only if' part. Suppose that the multiplicity of $x_0$ as a root of $F_{3p}$ is $2$. \cref{prop:qp_multiplicity_mod_p} Part (a) immediately implies that $x_0$ is a root of $u_3$. So the only thing left to show is that $x_0 \in \F_p$.

As described in the proof of \cref{prop:repeated_root_Res}, there exists $\mu\in \F_p$ such that 
\[
a_3(x_0,\mu) = 0, \quad \text{and} \quad R_3(\mu) = {\rm resultant}(a_3(x,\mu),u_3(x))=-2\mu^4+36\mu^2+54=0.
\]
The latter equation implies that $108=(\mu^2-9)^2$ and hence $3$ is a square modulo $p$. Write $3=c^2$ for some $c\in \F_p$. We have
\[
u_3(x)=(x^2+x+1)^2-3x^2=(x^2+(1+c)x+1)(x^2+(1-c)x+1) \in \F_p[x].
\]
Let $b(x)\in \F_p[x]$ be the minimal polynomial of $x_0$ over $\F_p$. Then $b(x)$ is an irreducible factor of both $u_3(x)$ and $a_3(x,\mu)$. In particular $\deg b(x)=1$ or $2$. 

If $\deg b(x)=2$, then $b(x)=x^2+(1 \pm c)x+1$ is reciprocal. Hence the zeroes of $b(x)$ are $x_0$ and $1/x_0$. Since $b(x)$ divides $a_3(x,\mu)=x^3-\mu x^2-\mu x-1$, the zeroes of $a_3(x,\mu)$ are $x_0, 1/x_0$ and $\beta$, for some $\beta\in \overline{\F}_p$. By Vieta's formula, $\beta = x_0 \cdot (1/x_0) \cdot \beta=1$. Hence $0=a_3(1,\mu)=-2\mu$, a contradiction since $R_3(0) = 54 \neq 0 \in \F_p$.

The above argument shows that $\deg b(x)=1$ and hence $x_0\in \F_p$.
\end{proof}

\begin{cor} Let $p>3$ be a prime. Then 
${\rm disc}(F_{3p})=0\pmod p$ if and only if $u(x)=x^4+2x^3+2x+1$ has a root modulo $p$. In particular,
\begin{enumerate}[(a)]
    \item if $p\equiv \pm5\pmod {12}$ then $p\nmid {\rm disc}(F_{3p})$, 
    \item if $p\equiv 11\pmod{12}$ then $p\mid {\rm disc}(F_{3p})$,
    \item if $p\equiv 1\pmod {12}$ then $p\mid {\rm disc}(F_{3p})$ if and only if $12$ is a quartic residue mod $p$.
\end{enumerate}
\end{cor}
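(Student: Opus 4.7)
The plan is first to reduce the problem to \cref{thm:3p_multiplicity_mod_p}, which characterizes repeated roots of $F_{3p}$ in $\overline{\F}_p$. Since $p \mid \disc(F_{3p})$ is equivalent to $F_{3p}$ having a repeated root in $\overline{\F}_p$, the theorem immediately gives the main equivalence: $p \mid \disc(F_{3p})$ if and only if $u_3(x) = x^4+2x^3+2x+1$ has a root in $\F_p$. The rest of the argument analyses this condition in terms of quadratic and quartic residues.

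For parts (a)--(c), I would use the identity $u_3(x) = (x^2+x+1)^2 - 3x^2$ (already used in the proof of \cref{thm:3p_multiplicity_mod_p}). First, any root $\alpha \in \F_p$ of $u_3$ satisfies $\alpha \neq 0$ (as $u_3(0)=1$) and $(\alpha^2+\alpha+1)^2 = 3\alpha^2$, which forces $3$ to be a quadratic residue modulo $p$. By quadratic reciprocity, this happens exactly when $p \equiv \pm 1 \pmod{12}$. Hence if $p \equiv \pm 5 \pmod{12}$ then $u_3$ has no root in $\F_p$, proving (a).

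When $3$ is a QR modulo $p$, writing $3 \equiv c^2 \pmod{p}$ gives the factorisation $u_3(x) = (x^2+(1+c)x+1)(x^2+(1-c)x+1)$, with quadratic factors of discriminants $2c$ and $-2c$ whose product is $-12$. For (b), if $p \equiv 11 \pmod{12}$ then $-1$ is a nonresidue, so $-12$ is a nonresidue, and thus exactly one of $\pm 2c$ is a square in $\F_p$; this produces a root of $u_3$ in $\F_p$ and hence $p \mid \disc(F_{3p})$. For (c), if $p \equiv 1 \pmod{12}$ then $-1$ is a square, so $-12$ is a square and $2c, -2c$ are simultaneously squares or nonsquares. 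Thus $u_3$ has a root in $\F_p$ if and only if $2c$ is a square in $\F_p$, and squaring shows this is equivalent to $12 = (2c)^2$ being a fourth power modulo $p$.

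The entire argument is essentially quadratic-reciprocity bookkeeping layered on top of \cref{thm:3p_multiplicity_mod_p}; no essential obstacle is anticipated, and the only care needed is keeping track of when the two quadratic factors of $u_3$ split versus remain irreducible over $\F_p$.
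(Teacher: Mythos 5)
Your proposal is correct and follows essentially the same route as the paper: reduce to \cref{thm:3p_multiplicity_mod_p}, use $u_3(x)=(x^2+x+1)^2-3x^2$ to force $\left(\frac{3}{p}\right)=1$, then factor $u_3$ into the two quadratics of discriminants $\pm 2c$ and compare their quadratic characters via $\left(\frac{-1}{p}\right)$. The only cosmetic difference is that you phrase the case analysis through the product $(2c)(-2c)=-12$ rather than directly through $\left(\frac{-1}{p}\right)$, which is the same computation.
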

\begin{proof}
The first statement follows immediately from \cref{thm:3p_multiplicity_mod_p}. 
In particular if  $p\equiv \pm5\pmod {12}$ then $\left( \dfrac{3}{p} \right)=-1$ and hence $u_3(x)=(x^2+x+1)^2-3x^2$ has no root in $\F_p$. Therefore ${\rm disc}(F_{3p}) \neq 0\pmod p$.

Now suppose $p\equiv \pm 1\pmod {12}$. Then $\left( \dfrac{3}{p} \right)=1$. Let $c\in \F_p$ such that $3= c^2$.
\[
u_3(x)=(x^2+x+1)^2-3x^2=(x^2+(1+c)x+1)(x^2+(1-c)x+1).
\]
The discriminant of $x^2+(1+c)x+1$ is equal to $(1+c)^2-4=2c$, and the discriminant of $x^2+(1-c)x+1$ is equal to $(1-c)^2-4 = -2c$.

If $p\equiv 11\pmod {12}$ then $\left( \dfrac{-1}{p} \right) = -1$. Hence either $2c$ or $-2c$ is a square in $\F_p$. 
Therefore, either $x^2+(1+c)x+1$ or $x^2+(1-c)x+1$ has a root in $\F_p$. So \cref{thm:3p_multiplicity_mod_p} implies that $F_{3p}$ has a double root in $\F_p$, and so $p\mid {\rm disc}(F_{3p})$.

If $p\equiv 1\pmod{12}$ then $\left(\dfrac{2c}{p}\right)=\left(\dfrac{-2c}{p}\right)$. By \cref{thm:3p_multiplicity_mod_p}, we get that $p\mid {\rm disc}(F_{3p})$ if and only if there exists $a\in \F_p$ such that $a^2=2c$. Since $c^2 = 3$, this is true if and only if there exists $a\in \F_p$ such that $a^4=12$, i.e., $12$ is a quartic residue mod $p$.
\end{proof}
\begin{cor}
Let $x_0 \in \F_p$. Then $x_0$ is a root of the Fekete polynomial $f_{3p}$ if and only if it is a root of $u_3(x)= x^4+2x^3+2x+1.$
\end{cor}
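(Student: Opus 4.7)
The plan is to combine the multiplicity relation in \cref{prop:qp_multiplicity_mod_p}(c) with the characterization of double $\F_p$-roots of $F_{3p}$ in \cref{thm:3p_multiplicity_mod_p}. The forward direction is essentially formal: if $x_0 \in \F_p$ is a zero of $f_{3p}$, then it is also a zero of $F_{3p}$ since $f_{3p} \mid F_{3p}$; \cref{prop:qp_multiplicity_mod_p}(c) raises its multiplicity by one, so $\mult_{x_0}(F_{3p}) \geq 2$, and \cref{thm:3p_multiplicity_mod_p}(2) then forces $x_0$ to be a root of $u_3$.

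For the reverse direction, the only non-formal step is showing that every $\F_p$-root of $u_3$ is already a root of $F_{3p}$ modulo $p$. Once this is established, \cref{thm:3p_multiplicity_mod_p}(2) gives $\mult_{x_0}(F_{3p}) = 2$, and \cref{prop:qp_multiplicity_mod_p}(c) returns $\mult_{x_0}(f_{3p}) = 1$, so $x_0$ is a root of $f_{3p}$. To carry out the verification I would invoke the congruence
\[
F_{3p}(x) \equiv (x^3-1)^{p-1} F_3(x) - F_3(x)^p \pmod{p}
\]
established in the proof of \cref{prop:qp_multiplicity_mod_p}. A short preliminary check rules out $x_0^3 = 1$: direct evaluation gives $u_3(1) = 6 \not\equiv 0 \pmod p$, and at a primitive cube root one has $u_3 = \Phi_3(x)^2 - 3x^2 = -3x^2 \neq 0$. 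Hence $x_0^3 - 1 \neq 0$, and since $x_0 \in \F_p$ satisfies $x_0^p = x_0$, Fermat's little theorem yields $(x_0^3 - 1)^{p-1} = 1$ and $F_3(x_0)^p = F_3(x_0)$, whereupon the two terms on the right-hand side of the congruence cancel.

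There is no genuine obstacle here: the corollary is a compact synthesis of the structural results already proved, and the only new computational content is the brief Frobenius simplification needed to confirm that $\F_p$-roots of $u_3$ actually lie in the zero set of $F_{3p}$.
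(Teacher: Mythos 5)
Your proof is correct and follows essentially the same route as the paper: both directions rest on \cref{thm:3p_multiplicity_mod_p} together with the fact that the cyclotomic cofactor of $f_{3p}$ in $F_{3p}$ contributes exactly one to the multiplicity at a point of $\F_p^\times$ (you cite \cref{prop:qp_multiplicity_mod_p}(c) where the paper reads this off from \cref{eq:explicit_F3p}, but these encode the same fact). Your explicit Frobenius check that an $\F_p$-root of $u_3$ is actually a root of $F_{3p}$ — needed before the theorem's hypothesis applies — is a point the paper's own proof of this corollary leaves implicit (it appears only in the analogous $n=5p$ argument), so that added care is welcome.
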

\begin{proof}
    Suppose $x_0 \in \F_p$ is a root of $u_3(x)$. Then \cref{thm:3p_multiplicity_mod_p} says that $x_0$ is a double root of $F_{3p}$. By the explicit description of the cyclotomic factors of $F_{3p}$ given in \cref{eq:explicit_F3p}, it is clear that $x \prod\limits_{d \in S_{3p}} \Phi_d(x)$ is separable over $\F_p$. So we can deduce that $x_0$ is a root of $f_{3p}$.
    
    Conversely, suppose $x_0 \in \F_p$ is a root of $f_{3p}$. So it is also a root of $F_{3p}$. We know by \cref{prop:value_at_zeta_d} that $F_{3p}(1) = 2(p-1) = -2 \neq 0 \in \F_p$ and $F_{3p}(\zeta_3) = -(p-1) = 1 \neq 0 \in \F_p$.
    This means that $x_0^3 \neq 1$. On the other hand, since $x_0 \in \F_p$, we know that $x_0^{p-1} = 1$. Therefore by \cref{eq:explicit_F3p}, we see that $x_0$ has multiplicity at least $2$ as a root of $F_{3p}$. \cref{thm:3p_multiplicity_mod_p} then gives us that $x_0$ is a root of $u_3(x)$.
\end{proof}
\begin{cor}
The polynomial $f_{3p}$  is separable over $\F_p$, and hence also separable over $\Z$. Consequently, $g_{3p}$ is separable as well. 
\end{cor}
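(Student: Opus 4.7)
The plan is to establish the stronger statement that $f_{3p}$ is separable over $\F_p$, from which separability over $\Z$ follows automatically: if $\disc(f_{3p}) \not\equiv 0 \pmod p$ then $\disc(f_{3p}) \neq 0$ in $\Z$. By \cref{thm:3p_multiplicity_mod_p}, every $x_0 \in \overline{\F}_p$ with $\mult_{x_0}(F_{3p}) \geq 2$ must lie in $\F_p$, be a root of $u_3(x) = x^4 + 2x^3 + 2x + 1$, and satisfy $\mult_{x_0}(F_{3p}) = 2$ exactly. In particular, every repeated root of $f_{3p}$ in $\overline{\F}_p$ already lies in $\F_p$, so it suffices to show that each root $x_0 \in \F_p$ of $f_{3p}$ is simple.

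Suppose then that $x_0 \in \F_p$ satisfies $f_{3p}(x_0) = 0$. By the preceding corollary, $u_3(x_0) = 0$ and hence $\mult_{x_0}(F_{3p}) = 2$. Using the factorisation $F_{3p}(x) = x \cdot f_{3p}(x) \cdot \prod_{d \in S_{3p}} \Phi_d(x)$, the task reduces to showing that the cyclotomic part contributes multiplicity exactly $1$ at $x_0$. Since $x_0 \in \F_p^{\times}$ has some multiplicative order $d$ dividing $p-1$, it is a simple root of $\Phi_d$. A direct computation gives $u_3(1) = 6$, $u_3(-1) = -2$, and $u_3(\zeta_3) = 3(1+\zeta_3) = -3\zeta_3^2$, all nonzero in $\F_p$ for $p > 3$. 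Therefore $d \geq 4$, and such a $d$ automatically lies in $S_{3p}$ because the only divisor of $p-1$ possibly absent from $S_{3p}$ is $d = 3$. The polynomials $\{\Phi_d : d \in S_{3p}\}$ are pairwise coprime and each is separable over $\F_p$; for the extra factor $\Phi_8$ appearing in the cases $p \equiv 5, 13 \pmod{24}$ one checks that $8 \nmid p-1$, so $\Phi_8$ is coprime to $x^{p-1}-1$ modulo $p$. Hence $\mult_{x_0}\bigl(\prod_{d \in S_{3p}} \Phi_d\bigr) = 1$, and therefore $\mult_{x_0}(f_{3p}) = 2 - 1 = 1$.

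The separability of $g_{3p}$ then follows formally from the discriminant identity of \cref{prop:discriminant_f}, namely $\disc(f_{3p}) = (-1)^{\deg(f_{3p})/2} f_{3p}(1) f_{3p}(-1) \disc(g_{3p})^2$: since $\disc(f_{3p}) \neq 0$, we must have $\disc(g_{3p}) \neq 0$. The only mildly delicate point in the whole argument is ruling out $x_0 \in \{1, -1, \zeta_3\}$, which is exactly where the hypothesis $p > 3$ is used; everything else is a direct bookkeeping of multiplicities in the factorisation of $F_{3p}$ modulo $p$.
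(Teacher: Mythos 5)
Your argument is correct and rests on the same two ingredients as the paper's proof: \cref{thm:3p_multiplicity_mod_p} forces any repeated root of $f_{3p}$ to lie in $\F_p$ with $\mult_{x_0}(F_{3p})=2$, and the factorisation \cref{eq:explicit_F3p} shows the cyclotomic part also vanishes at such an $x_0$. The only cosmetic difference is that the paper concludes by contradiction ($\mult_{x_0}(F_{3p})\geq 3$), whereas you compute $\mult_{x_0}(f_{3p})=2-1=1$ directly, which requires your (correct) extra check that the cyclotomic part contributes multiplicity exactly one.
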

\begin{proof}
    Suppose on the contrary that $f_{3p}$ is not separable over $\F_p$. Let $x_0 \in \overline{\F}_p$ be a root of $f_{3p}$ with $\mult_{x_0}(f_{3p}) \geq 2$. Then $\mult_{x_0}(F_{3p}) \geq 2$ and \cref{thm:3p_multiplicity_mod_p} says that $\mult_{x_0}(F_{3p}) = 2$ and $x_0 \in \F_p$. This is absurd because $x_0$ is also a root of $x \prod\limits_{d \in S_{3p}} \Phi_d(x)$ by \cref{eq:explicit_F3p}, but that makes $\mult_{x_0}(F_{3p}) \geq 3$.
\end{proof}

We now write down explicitly the values $f_{3p}(1)$ and $f_{3p}(-1)$. These can be obtained directly from \cref{prop:values_at_1and-1}, or computed using the explicit formula for $f_{3p}$ in \cref{prop:explicit_f3p}. We then use this information to prove a fact about $\disc(f_{3p})$ that was first discovered by experimental data. Note that $\disc(f_{3p}) \neq 0$ since $f_{3p}$ is separable.
\begin{lem}
\label{lem:f3p_value_1andminus1}
Let $p > 3$ be a prime. Then
\[f_{3p}(1)=
\begin{cases}
6 &\text{ if } p\equiv 1,7,19\pmod {24}\\
3 &\text{ if } p\equiv 13\pmod {24}\\
1  &\text{ if } p\equiv 5\pmod {24}\\
2&\text{ if } p\equiv 11,17,23\pmod {24}.
\end{cases}
f_{3p}(-1)=
\begin{cases}
-1 &\text{ if } p\equiv 5,13\pmod {24}\\
-2 &\text{ otherwise.}
\end{cases}\] 
\end{lem}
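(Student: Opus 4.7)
The plan is to derive both values directly from \cref{prop:explicit_f3p}, which in each of the four cases presents $f_{3p}(x)$ as $g(x)/h_p(x)$, where $g(x) = x^{2p+2}+x^{2p+1}+x^{p+2}+x^p+x+1$ is common to all four cases and $h_p(x)$ is a short product of cyclotomic polynomials determined by $p \pmod{24}$. So I first evaluate the common numerator: $g(1) = 6$, and since $p$ is odd, $g(-1) = 1 + (-1) + (-1) + (-1) + (-1) + 1 = -2$.

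Then the values $f_{3p}(\pm 1) = g(\pm 1)/h_p(\pm 1)$ follow by dividing by a small explicit cyclotomic quantity in each case: $h_p \equiv 1$ when $p \equiv 1,7,19 \pmod{24}$; $h_p(x) = x^4+1$, which takes value $2$ at both $\pm 1$, when $p \equiv 13 \pmod{24}$; $h_p(x) = (x^2+x+1)(x^4+1)$, which takes values $6$ and $2$ at $+1$ and $-1$, when $p \equiv 5 \pmod{24}$; and $h_p(x) = x^2+x+1$, which takes values $3$ and $1$ at $+1$ and $-1$, when $p \equiv 11,17,23 \pmod{24}$. Matching these ratios against the lemma's claimed values is a quick arithmetic check that handles all eight residue classes at once.

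There is essentially no obstacle here, since \cref{prop:explicit_f3p} does the real work. Alternatively, one could invoke \cref{prop:values_at_1and-1} directly with $n = 3p$, where the gcds simplify as $D_1 = D_4 = 2$, $D_3 = \gcd(p-1,4)$, and $D_2 = \gcd(3p+1,p+3) = \gcd(8,p+3)$ (using the identity $3p+1 = 3(p+3)-8$); tabulating these across the eight residue classes modulo $24$, together with the case distinction $p \equiv 1 \pmod 3$, recovers the same values. The only minor subtlety worth flagging in this alternative approach is to take care with the case split between $p \equiv 1 \pmod q$ and $p \not\equiv 1 \pmod q$ appearing in \cref{prop:values_at_1and-1}, which refines the four-way case split of \cref{prop:explicit_f3p} but yields identical answers.
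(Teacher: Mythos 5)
Your proposal is correct and takes essentially the same approach as the paper: the paper offers no written proof beyond remarking that the values ``can be obtained directly from \cref{prop:values_at_1and-1}, or computed using the explicit formula for $f_{3p}$ in \cref{prop:explicit_f3p}'', which are exactly the two routes you carry out. All of your evaluations (the numerator giving $6$ and $-2$ at $x=\pm 1$, and the cyclotomic denominators giving $1,2,6,3$ at $x=1$ and $1,2,2,1$ at $x=-1$ in the four cases) check out against the stated values.
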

\begin{prop}
Let $p > 3$ be a prime.
If $p \equiv 1 \pmod{3}$ then $\disc(f_{3p})<0$.
If $p \equiv 2 \pmod{3}$ then $\disc(f_{3p})$ is a nonzero perfect square.
\end{prop}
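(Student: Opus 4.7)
The plan is to deduce the proposition as a direct consequence of \cref{prop:discriminant_f} applied with $q = 3$, combined with the separability of $f_{3p}$ established in the preceding corollary, which guarantees that the standing hypothesis $\disc(f_{3p}) \neq 0$ of that proposition is satisfied.

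The key observation is that the conditions distinguishing the two subcases in each branch of \cref{prop:discriminant_f} hinge on whether $q \equiv 1 \pmod 4$, and for us $q = 3 \not\equiv 1 \pmod 4$. Hence in both branches we automatically land in the ``otherwise'' subcase, independently of the residue of $p$ modulo $4$. Concretely, for $p \not\equiv 1 \pmod 3$ we obtain $\disc(f_{3p}) \equiv 1$ in $\Q^{\times}/\Q^{\times 2}$, while for $p \equiv 1 \pmod 3$ we obtain $\disc(f_{3p}) \equiv -3$ in $\Q^{\times}/\Q^{\times 2}$.

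From these two square-class identities the claims fall out. In the case $p \equiv 2 \pmod 3$, writing $\disc(f_{3p}) = k^2$ for some $k \in \Q^{\times}$ (nonzero by separability), the integrality $\disc(f_{3p}) \in \Z$ forces $k \in \Z$, so $\disc(f_{3p})$ is a nonzero perfect square. In the case $p \equiv 1 \pmod 3$, writing $\disc(f_{3p}) = -3 k^2$ with $k \in \Q^{\times}$ makes the value manifestly negative.

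No real obstacle is expected; the substantive work is already contained in \cref{prop:discriminant_f} and the separability corollary. As a sanity check I would verify both square-class assertions directly against the explicit values of $f_{3p}(\pm 1)$ from \cref{lem:f3p_value_1andminus1} and the degree formulas from \cref{prop:explicit_f3p}, via the reciprocal-polynomial identity $\disc(f_n) = (-1)^{\deg(f_n)/2} f_n(1) f_n(-1) \disc(g_n)^2$; this reduces to a short case-by-case computation over the residues of $p$ modulo $24$.
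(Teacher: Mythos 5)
Your proposal is correct, and it reaches the conclusion by a slightly different route than the paper. The paper proves this proposition by a self-contained computation: it plugs the explicit values of $f_{3p}(\pm 1)$ from \cref{lem:f3p_value_1andminus1} and the degree formulas from \cref{prop:explicit_f3p} into the reciprocal-polynomial identity $\disc(f_{3p})=(-1)^{\deg(f_{3p})/2}f_{3p}(-1)f_{3p}(1)\disc(g_{3p})^2$, obtaining $\disc(f_{3p})$ equal to $-12$, $-3$, $1$, or $4$ times $\disc(g_{3p})^2$ according to $p$ modulo $24$ --- which is exactly the ``sanity check'' you describe at the end. Your primary route instead specializes the general square-class formula of \cref{prop:discriminant_f} at $q=3$, observing that $3\not\equiv 1\pmod 4$ forces the ``otherwise'' branch in both cases, and then upgrades ``square in $\Q^\times$'' to ``perfect square in $\Z$'' via the standard lowest-terms argument. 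Both are sound; since \cref{prop:discriminant_f} was itself proved from \cref{prop:values_at_1and-1} by the same identity, the two arguments share the same engine. What your shortcut buys is economy (no case split modulo $24$); what the paper's direct computation buys is the sharper output that $\disc(f_{3p})$ is \emph{exactly} $-12$, $-3$, $1$, or $4$ times the square $\disc(g_{3p})^2$, not merely equal to $\pm 3$ or $1$ up to squares. You correctly supply the nonvanishing hypothesis of \cref{prop:discriminant_f} from the separability corollary, so there is no gap.
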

\begin{proof}
    Since $f_{3p}$ is a reciprocal polynomial with $g_{3p}$ its trace polynomial, we have
    \[ \disc(f_{3p})= (-1)^{\frac{\deg(f_{3p})}{2}} f_{3p}(-1) f_{3p}(1) \disc(g_{3p})^2. \]
    Calculating using \cref{lem:f3p_value_1andminus1} and \cref{prop:explicit_f3p}, we get
    \begin{align*}
    \disc(f_{3p})&=\begin{cases}
    (-1)^{p+1}\cdot(-2)\cdot6\cdot\disc(g_{3p})^2 &\text{ if } p
    \equiv 1,7,19\pmod {24}\\
     (-1)^{p-1}\cdot(-1)\cdot3\cdot\disc(g_{3p})^2 &\text{ if } p\equiv 13\pmod {24}\\
      (-1)^{p-2}\cdot(-1)\cdot1\cdot\disc(g_{3p})^2 &\text{ if } p\equiv 5\pmod {24}\\
       (-1)^{p}\cdot(-2)\cdot2\cdot\disc(g_{3p})^2 &\text{ if } p\equiv 11,17,23\pmod {24}
    \end{cases}\\
    &=\begin{cases}
    -12\disc(g_{3p})^2 &\text{ if } p\equiv 1,7,19\pmod {24}\\
     -3\disc(g_{3p})^2 &\text{ if } p\equiv 13\pmod {24}\\
      \disc(g_{3p})^2 &\text{ if } p\equiv 5\pmod {24}\\
       4\disc(g_{3p})^2 &\text{ if } p\equiv 11,17,23\pmod {24}.
    \end{cases}
    \end{align*}
\end{proof}

\section{The case $n=5p$}
\label{sec:5p}

In this section, we provide some partial results for the case $n=5p$ where $p$ is a prime greater than $5$. The goal is to prove the following analog of \cref{thm:3p_multiplicity_mod_p}. 

\begin{thm}
\label{thm:5p_multiplicity_mod_p}
Let $p>5$ be a prime.
Let $x_0\in \overline{\F}_p$ be a root of $F_{5p}$. 
\begin{enumerate}[(a)]
    \item The multiplicity of $x_0$ as a root of $F_{5p}$ is at most $2$.
    \item The multiplicity of $x_0$ as a root of $F_{5p}$ is $2$ if and only $x_0 \in \F_p$ and $u_5(x_0) = 0$.
\end{enumerate}
\end{thm}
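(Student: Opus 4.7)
The plan is to follow the blueprint of \cref{thm:3p_multiplicity_mod_p}, with the extra combinatorial complexity arising from the higher degree of $u_5$. For part (a), I would compute $\disc(u_5)$ explicitly. Since $u_5(x) = \Phi_5(x)^2 - 5x^4$ has degree $8$, this is an elementary finite calculation, and I expect $\disc(u_5)$ to factor as $\pm 2^a \cdot 5^b$ for small exponents. Granting this, $\disc(u_5) \not\equiv 0 \pmod{p}$ for every prime $p > 5$, so \cref{prop:qp_multiplicity_mod_p}(b) yields (a). The `if' direction of (b) is then immediate from \cref{prop:qp_multiplicity_mod_p}(a): for $x_0 \in \F_p$ with $u_5(x_0) = 0$, separability of $u_5$ mod $p$ forces $\mult_{x_0}(u_5) = 1$, hence $\mult_{x_0}(F_{5p}) = 2$.

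The core of the proof is the `only if' direction of (b). Suppose $\mult_{x_0}(F_{5p}) = 2$. By \cref{prop:qp_multiplicity_mod_p}(a), $u_5(x_0) = 0$, and it remains to show $x_0 \in \F_p$. Following the proof of \cref{prop:repeated_root_Res}, there exists $\mu \in \F_p^\times$ with $R_5(\mu) = 0$ such that $x_0$ is a common root of $u_5(x)$ and $a(x,\mu) = x^5 - 1 - \mu F_5(x)$; note $\mu \neq 0$ because the constant term $R_5(0) = \Res(s_5, u_5) = 4 \cdot 5^5$ is nonzero mod $p$. Letting $b(x) \in \F_p[x]$ denote the minimal polynomial of $x_0$, since $b \mid \gcd(u_5, a(\cdot, \mu))$ we have $\deg b \mid 8$ and $\deg b \leq 5$, so $\deg b \in \{1, 2, 4\}$. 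The task is to rule out $\deg b \in \{2, 4\}$.

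A useful identity, verified by direct computation, is
\[ x^5 \, a(1/x, \mu) + a(x, \mu) = -2\mu F_5(x). \]
Since $a(x_0, \mu) = 0$ and $\mu \neq 0$, this shows that $1/x_0$ is a root of $a(\cdot, \mu)$ if and only if $F_5(x_0) = 0$, equivalently $x_0$ is a primitive $2$nd or $4$th root of unity; but a direct check gives $u_5(\pm 1) = u_5(\pm i) = -4$, so these are incompatible with $u_5(x_0) = 0$ for $p > 5$. Hence $1/x_0$ is not a root of $a(\cdot, \mu)$. This eliminates the reciprocal subcases at once: a reciprocal $b$ of degree $2$ would make $1/x_0$ a root of $b$ and hence of $a(\cdot, \mu)$, a contradiction; a reciprocal $b$ of degree $4$ forces the remaining root $\gamma$ of $a(x, \mu) = b(x)(x-\gamma)$ to satisfy $\gamma \cdot b(0) = e_5(a) = 1$, i.e.\ $\gamma = 1$, so $a(1, \mu) = -4\mu \neq 0$, again a contradiction.

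The main obstacle is the non-reciprocal subcases. In these, $b$ and its reciprocal $b^*$ are distinct irreducible factors of $u_5$ with $b \mid a(x, \mu)$ but $b^* \nmid a(x, \mu)$ by the identity above. Writing $a(x, \mu) = b(x) c(x)$ and combining the Vieta relations $e_k(a) = (-1)^{k-1}\mu$ for $1 \le k \le 4$ and $e_5(a) = 1$ with the equation $R_5(\mu) = 0$ and the factorization of $u_5$ over $\F_p$, one should be able to derive a contradiction. I expect this to require computer algebra, since $R_5(y)$ is an even polynomial of degree $8$ with cumbersome explicit coefficients; this is the analogue of the one-line Vieta step in the proof of \cref{thm:3p_multiplicity_mod_p}, but substantially more delicate due to the larger degree of $u_5$.
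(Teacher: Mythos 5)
Your blueprint is the right one, and your disposal of the reciprocal subcases via the identity $x^5\,a(1/x,\mu)+a(x,\mu)=-2\mu F_5(x)$ is a clean shortcut, but there are several concrete gaps. First, part (a) as written fails at $p=11$: the discriminant is $\disc(u_5)=-2^{12}\cdot 5^7\cdot 11^2$, not $\pm 2^a5^b$, so the separability argument via \cref{prop:qp_multiplicity_mod_p} breaks down for $p=11$ and that prime must be handled by a direct check that $F_{55}$ has no repeated root in $\overline{\F}_{11}$. Second, the claim $\deg b\mid 8$ is unjustified — the degree of an irreducible factor need not divide the degree of the polynomial — so your case list $\{1,2,4\}$ silently omits $\deg b\in\{3,5\}$. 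Third, in the `if' direction of (b) you invoke \cref{prop:qp_multiplicity_mod_p} Part (a) without first verifying its hypothesis that $x_0$ is a root of $F_{5p}$; one must note that $x_0\in\F_p^{\times}$ with $x_0^5\neq 1$ gives $F_{5p}(x_0)=(x_0^5-1)^{p-1}F_5(x_0)-F_5(x_0)^p=0$.

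The most serious issue is that the central non-reciprocal case is not actually carried out, and your setup lacks the structural input it needs. The paper first proves (\cref{lem:5p_repeated_root_modp}) that a repeated root forces $\left(\frac{5}{p}\right)=1$, by exhibiting $-R_5(y)=(8y^4-25y^2+125)^2-5(25y^2+75)^2$; this is what permits the factorization of $u_5=\Phi_5(x)^2-5x^4$ over $\F_p$ into the two reciprocal quartics $\Phi_5(x)\mp cx^2$ with $c^2=5$. Your plan never establishes this quadratic-residue fact, yet the ``factorization of $u_5$ over $\F_p$'' that you intend to combine with the Vieta relations depends on it, and it also supplies the missing bound $\deg b\leq 4$. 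With that factorization in hand, the paper's treatment of the non-reciprocal degree-$2$ case is an explicit hand computation, not a computer-algebra search: one divides $a_\mu$ by the quartic $v$ to get a cubic remainder $w(x)=cx^3-(c+c\mu)x^2+\mu$, writes $v=\frac{1}{b}(x^2+ax+b)(1+ax+bx^2)$, and derives $b^2-b+1=0$ together with $b^2=45b^2$, hence $p\mid 44$, i.e.\ $p=11$, which is excluded by the direct check above. As it stands, ``one should be able to derive a contradiction'' is a plan rather than a proof for exactly the step you identify as the main obstacle.
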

    


\begin{proof}[Proof of \cref{thm:5p_multiplicity_mod_p} Part (a)]

By \cref{prop:roots_over_fp} Part (b), if the discriminant of $u_5$ is not zero modulo~$p$, we get the desired result. We compute that
\[ \disc(u_5)= - 2^{12} \cdot 5^7 \cdot 11^2. \]
When $p=11$, we check directly that $F_{5p} = F_{55}$ has no repeated root in $\Fbar_{11}$.
\end{proof}


In this section, we use $a_\mu(x)$ for $a_5(x,\mu)=(x^5-1)-\mu(x+x^2+x^3+x^4)$.
Using SageMath, we see that the resultant of $a_{\mu}(x)$ and $u_5(x)$ is given by 
\[ R_5(\mu) = \Res(a_\mu(x), u_5(x))= -(64\mu^8 - 400 \mu^6 - 500\mu^4 - 25000\mu^2 - 12500).\] 
Because $a_{\mu}(x)$ and $u_5(x)$ have a common root, their resultant must be $0$. In other words, we know that $\mu \in \F_p$ is a root of $R_5(y).$ Using SageMath, we can see that we can rewrite $R_5(y)$ in the following form 
\[ -R_5(y) = (8y^4-25y^2+125)^2-5(25y^2+75)^2 .\] 
\begin{lem}
\label{lem:5p_repeated_root_modp}
Suppose that $F_{5p}(x)$ has a repeated root $x_0 \in \overline{\F}_p$, then $\left(\frac{5}{p} \right)=1.$    
\end{lem}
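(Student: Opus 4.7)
The plan is to leverage \cref{prop:repeated_root_Res} (whose proof extends verbatim to $n=5p$ since it only used general properties of $F_{pq}$) to produce a value $\mu\in \F_p$ with $R_5(\mu)=0$, and then read off the quadratic character of $5$ modulo $p$ directly from the displayed identity
\[ -R_5(y) = (8y^4 - 25y^2 + 125)^2 - 5(25y^2 + 75)^2. \]

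More precisely, assume $F_{5p}$ has a repeated root $x_0 \in \overline{\F}_p$. Arguing exactly as in the proof of \cref{prop:repeated_root_Res}, one finds $\mu \in \F_p$ such that $x_0$ is a common root of $a_\mu(x)$ and $u_5(x)$, so that $R_5(\mu) = 0$. Substituting into the factorization above gives
\[ (8\mu^4 - 25\mu^2 + 125)^2 = 5\,(25\mu^2 + 75)^2 \quad \text{in } \F_p. \]
The generic case is when $25\mu^2 + 75 \neq 0$ in $\F_p$: then the right side is $5$ times a nonzero square, so $5$ is a square in $\F_p$ and we are done.

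The only work is to rule out the degenerate case $25\mu^2 + 75 = 0$ in $\F_p$, which (since $p>5$) forces $\mu^2 = -3$ in $\F_p$, and then also forces $8\mu^4 - 25\mu^2 + 125 = 0$, i.e., $8\cdot 9 + 25\cdot 3 + 125 = 272 = 2^4\cdot 17 \equiv 0 \pmod p$. Since $p>5$ is prime, this pins down $p = 17$. To finish, I would check directly (either by listing the squares mod $17$, or by observing that $-3$ is a square mod $p$ iff $p\equiv 1\pmod 3$, and $17\not\equiv 1\pmod 3$) that $-3$ is not a square in $\F_{17}$, so no such $\mu \in \F_{17}$ exists; this contradiction eliminates the degenerate case.

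I expect the main (very mild) obstacle to be just this degenerate case analysis; the heart of the proof is really the algebraic identity for $R_5(y)$, which makes the Legendre symbol appear essentially for free. No extra machinery beyond what is already in the section is required.
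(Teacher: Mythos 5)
Your proposal is correct and follows essentially the same route as the paper: invoke \cref{prop:repeated_root_Res} (which already covers $n=5p$ since $5p$ is a product of two odd primes) to get $\mu\in\F_p$ with $R_5(\mu)=0$, read off $\left(\frac{5}{p}\right)=1$ from the identity $-R_5(y)=(8y^4-25y^2+125)^2-5(25y^2+75)^2$ when $25\mu^2+75\neq 0$, and eliminate the degenerate case $\mu^2=-3$ by computing $8\mu^4-25\mu^2+125=272=2^4\cdot 17$, forcing $p=17$, which is impossible because $-3$ is not a square mod $17$. This matches the paper's argument step for step.
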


\begin{proof}
    As explained above, the existence of a repeated root $x_0 \in \overline{\F}_p$ implies that $R_5(y)$ has a root $\mu \in \F_p$ where 
    \[ -R_5(y) = (8y^4-25y^2+125)^2-5(25y^2+75)^2 .\] 
If $25\mu^2+75 \neq 0$, then we conclude that $\left(\frac{5}{p} \right)=1.$ Otherwise, we must have $\mu^2+3=0$. Consequently 
\[ 0 = -R_5(\mu) =(8 \mu^4-25 \mu^2+125)^2 = 2^4 \times 17. \]

Since $p>5$, we conclude that $p=17.$ This is impossible because $\left(\frac{-3}{17} \right)=-1$, and hence the equation $\mu^2+3=0$ has no solution in $\F_p.$
\end{proof}

\begin{cor}
 If $\left(\frac{5}{p} \right)=-1$ then $F_{5p}$ is separable over $\F_p$. \qed 
\end{cor}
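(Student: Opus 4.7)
The plan is to derive this corollary directly as the contrapositive of the immediately preceding Lemma~\ref{lem:5p_repeated_root_modp}. That lemma asserts that if the reduction of $F_{5p}$ modulo $p$ has a repeated root in $\overline{\F}_p$, then $\left(\frac{5}{p}\right) = 1$. Consequently, under the hypothesis $\left(\frac{5}{p}\right) = -1$, the reduction of $F_{5p}$ modulo $p$ has no repeated roots in $\overline{\F}_p$, which is by definition what separability of $F_{5p}$ over $\F_p$ means. No additional computation is required; the content is fully carried by the lemma.

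The only conceptual check to perform is to confirm that the notion of \emph{separability over $\F_p$} used here agrees with the nonexistence-of-repeated-roots-in-$\overline{\F}_p$ condition appearing in the lemma. This is standard: for a nonzero polynomial in $\F_p[x]$, separability means that its distinct irreducible factors each have simple roots in $\overline{\F}_p$, equivalently that the polynomial and its derivative are coprime in $\F_p[x]$, equivalently that it has no repeated root in $\overline{\F}_p$. Thus the corollary is an immediate definitional unpacking of Lemma~\ref{lem:5p_repeated_root_modp}.

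There is no real obstacle at this step; the substantive work was in proving the lemma via the resultant identity
\[
-R_5(y) = (8y^4 - 25y^2 + 125)^2 - 5(25y^2 + 75)^2,
\]
which forces any $\mu \in \F_p$ with $R_5(\mu) = 0$ and $25\mu^2 + 75 \neq 0$ to make $5$ a square modulo $p$, together with the ad hoc check ruling out $\mu^2 + 3 = 0$ when $p > 5$. In spirit this corollary plays the role of the analogous separability statement for $f_{3p}$ in the previous section, with the important difference that here the quadratic residue condition on $5$ cannot be excluded uniformly in $p$, so only a conditional separability assertion is available at this stage; the full separability of $f_{5p}$ over $\Z$ is presumably treated in the sequel by handling the remaining case $\left(\frac{5}{p}\right) = 1$ separately.
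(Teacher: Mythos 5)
Your proof is correct and matches the paper exactly: the corollary is stated with a \qed precisely because it is the immediate contrapositive of Lemma~\ref{lem:5p_repeated_root_modp}, which is the route you take. Your definitional check that separability over $\F_p$ means no repeated roots in $\overline{\F}_p$ is the right (and only) thing to verify.
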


We now complete the proof of \cref{thm:5p_multiplicity_mod_p}.

\begin{proof}[Proof of \cref{thm:5p_multiplicity_mod_p} Part(2)]

Suppose $x_0\in \F_p$ is a root of $u_5(x)$. By \cref{def:uq}, since we know that $u_5(1) = q(q-1) \neq 0 \in \F_p$ and $u_5(\zeta_5) = -5\zeta_5^4 \neq 0 \in \overline{\F}_p$ for any primitive $5$-th root of unity $\zeta_5 \in \overline{\F}_p$, we get that $x_0^5 \neq 0$. On the other hand $x_0^{p-1} = 1$. Hence $F_{5p}(x_0) = 0$. So \cref{prop:qp_multiplicity_mod_p} Part (a) says that $\mult_{x_0}(F_{5p}) \geq 2$. Combining with \cref{thm:5p_multiplicity_mod_p} Part(a), we conclude that $\mult_{x_0}(F_{5p})= 2$.

Conversely suppose that $x_0 \in \overline{\F}_p$ is a double root of $F_{5p}$. By \cref{lem:5p_repeated_root_modp}, one has
$\left(\frac{5}{p} \right)=1.$ Let $c \in \F_p$ be such that $c^2=5$. Then we have
\[
\begin{aligned}u_5(x)&=(1+x+x^2+x^3+x^4)^2-5x^2\\
&= (1+x+x^2+x^3+x^4-cx^2)(1+x+x^2+x^3+x^4+cx^2).
\end{aligned}
\]

Let $m(x)$ be the minimal polynomial of $x_0$ over $\F_p$. Then $m(x)$ is a common divisor of $u_5(x)$ and $a_{\mu}(x).$ Up to a choice of $c$, we can assume that $m(x)$ is a divisor of 
\[ v(x)= 1+x+x^2+x^3+x^4-cx^2.\]
By polynomial division, we see that $a_{\mu}(x) = (x-\mu-1)v(x) + w(x)$, where $w(x) = cx^3-(c+c\mu)x^2+\mu$. Since $x_0$ is a common root of $v(x)$ and $a_\mu(x)$, we get that $x_0$ is a common root of $v(x)$ and $w(x)$.
 Hence $\deg m\leq 2$. 
Suppose that $\deg m=2$ and $m(x)=x^2+ax+b$, for some $a,b\in \F_p$.

\medskip
\noindent{\bf Case 1:} $b=1$, i.e., $m(x)$ is reciprocal. In this case, the roots of $m(x)$ are $x_0$ and $1/x_0$. This implies that $1/x_0$ is also a root of $a_\mu(x)$. Hence
\[
0=x_0^5a_\mu(1/x_0)= (1-x_0^5)-\mu (x_0+x_0^2+x_0^3+x_0^4).
\]
From $a_\mu(x_0)=0$, we see that $x_0^5-1=\mu (x_0+x_0^2+x_0^3+x_0^4)=1-x_0^5$. Hence  $x_0^5-1=0$. Thus 
$0=(x_0^5-1)=(1+x_0+x_0^2+x_0^3+x_0^4)(x_0-1)=cx_0^2(x_0-1)$. This implies that $x_0=0$ or $1$, a contradiction.

\medskip
\noindent{\bf Case 2:} $b\not=1$, i.e., $m(x)$ is not reciprocal. In this case, since $v(x)$ is reciprocal of degree 4, one has
\[
v(x)=\dfrac{1}{b} (x^2+ax+b)(1+ax+bx^2).
\]
By comparing the corresponding coefficients, we obtain 
\[
\dfrac{a+ab}{b}=1 \quad \text{ and } \dfrac{1+a^2+b^2}{b}=1-c.
\]
Hence
\[
a+ab=b \text{ and } 1+a^2+b^2=b-bc.
\]
Also, since $m(x)=x^2+ax+b$ is a divisor of $w(x)=cx^3-(c+c\mu)x^2+\mu$, one can write 
\[
cx^3-(c+c\mu)x^2+\mu =(x^2+ax+b)(cx-d)=cx^3+(ac-d)x^2+(bc-ad)x-bd,
\]
for some $d\in \F_p$. By comparing the corresponding coefficients, we obtain that
\[
ac-d=-c-c\mu,\quad bc-ad=0,\quad \text{ and } -bd=\mu.
\]
Hence
\[
bc=ad=a(ac+c+c\mu)=a^2c+ac+ac\mu.
\]
Thus $b=a^2+a+a\mu$. Also, we have
\[
a\mu=-abd=-b^2c.
\]
Hence 
\[
b=a^2+a-b^2c.
\]
In summary, we obtain the following three relations
\[
a+ab=b \;(1),\quad 1+a^2+b^2=b-bc \;(2), \quad b=a^2+a-b^2c \;(3).
\]
From $(2)$ and $(3)$ we get 
\[ b+a^2b+b^3=b^2-b^2c=b^2+b-(a^2+a).\]
Hence 
\[
b^2-b^3=a^2+a^2b+a=ab+a=b.
\]
(For the second and last equality, we use (1).) Since $b\not=0$, we obtain that  $b^2-b+1=0$.

Now from (2), we have $-bc=1+a^2+b^2-b=a^2$. Hence $a^4=b^2c^2=5b^2$. From (1), we obtain $b=a(1+b)$. Hence $b^4=a^4(1+b)^4=5b^2(1+b)^4$. Thus 
\[
b^2=5(1+b)^4=5(1+2b+b^2)^2=5(3b)^2=45b^2.
\]
We obtain that $p\mid 44$. Hence $p=11$. But we can check directly that $F_{5\cdot 11}(x)$ has no repeated root in $\overline{\F}_p$, a contradiction.
\end{proof}

\begin{cor}
The polynomial $f_{5p}$  is separable over $\F_p$, and hence also separable over $\Z$. Consequently, $g_{5p}$ is separable as well.
\end{cor}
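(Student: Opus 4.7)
The plan is to mirror the separability argument for $f_{3p}$ in the previous section, now leveraging \cref{thm:5p_multiplicity_mod_p} in place of \cref{thm:3p_multiplicity_mod_p}.

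Suppose for contradiction that $f_{5p}$ is not separable over $\F_p$, and let $x_0 \in \overline{\F}_p$ be a repeated root. Since $f_{5p}$ divides $F_{5p}$, we have $\mult_{x_0}(F_{5p}) \geq 2$, so \cref{thm:5p_multiplicity_mod_p} forces $\mult_{x_0}(F_{5p}) = 2$ and $x_0 \in \F_p$. The next step is to argue that $x_0$ is \emph{also} a root of the cyclotomic part $x \prod_{d \in S_{5p}} \Phi_d(x)$; combined with the fact that $x_0$ is a double root of $f_{5p}$, this will yield $\mult_{x_0}(F_{5p}) \geq 3$, a contradiction.

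To carry this out, note that $x_0 \in \F_p^\times$ (since $F_{5p}$ has no monomial $x^0$, or directly since $x_0 = 0$ would contradict $u_5(0) = 1$). Hence $x_0$ is a primitive $d$-th root of unity for some $d \mid p-1$. We rule out $d = 1$ and $d = 5$ via \cref{prop:value_at_zeta_d}: $F_{5p}(1) = \varphi(5p) \equiv -4 \not\equiv 0 \pmod p$, and $F_{5p}(\zeta_5) = \mu(5)\varphi(5p)/\varphi(5) = -(p-1) \equiv 1 \not\equiv 0 \pmod p$. Therefore $d > 1$ and $d \neq 5$, so by the definition of $S_{5p}$ in \cref{eq:S_n} we have $d \in S_{5p}$ and $\Phi_d$ appears as a factor of $x \prod_{d' \in S_{5p}} \Phi_{d'}(x)$, contributing an additional simple root at $x_0$. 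This contradiction establishes separability over $\F_p$. Separability over $\Z$ then follows by the standard fact that $\disc(f_{5p}) \pmod p \neq 0$ implies $\disc(f_{5p}) \neq 0$ in $\Z$.

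For the final assertion on $g_{5p}$, recall from \cref{def:g} that $f_{5p}(x) = x^{\deg(f_{5p})/2} g_{5p}(x + 1/x)$, and from \cref{prop:values_at_1and-1} that $f_{5p}(\pm 1) \neq 0$, so $\pm 1$ are not roots of $f_{5p}$. The roots of the separable reciprocal polynomial $f_{5p}$ therefore partition into distinct reciprocal pairs $\{\alpha, 1/\alpha\}$ with $\alpha \neq \pm 1$, and each such pair contributes exactly one simple root $\alpha + 1/\alpha$ to $g_{5p}$. Since the map $\alpha \mapsto \alpha + 1/\alpha$ is injective on such pairs, $g_{5p}$ is separable. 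The only conceptual obstacle is the one already resolved in \cref{thm:5p_multiplicity_mod_p}, namely ensuring that any repeated root lives in $\F_p$; everything else is a direct translation of the $n = 3p$ argument.
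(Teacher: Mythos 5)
Your proof is correct and follows exactly the strategy the paper uses for the analogous corollary in the $n=3p$ case (which is clearly the intended proof here): a repeated root $x_0$ of $f_{5p}$ over $\overline{\F}_p$ is forced by \cref{thm:5p_multiplicity_mod_p} to lie in $\F_p$ and to be a double root of $F_{5p}$, yet since $x_0^{p-1}=1$ and $F_{5p}(1), F_{5p}(\zeta_5)\neq 0$ in $\overline{\F}_p$ it is also a root of $\prod_{d\in S_{5p}}\Phi_d(x)$, giving $\mult_{x_0}(F_{5p})\geq 3$, a contradiction. One parenthetical slip: the absence of a constant term in $F_{5p}$ shows that $0$ \emph{is} a root of $F_{5p}$, so it does not by itself give $x_0\neq 0$; your alternative justification via $u_5(0)=1$ (or, equivalently, $f_{5p}(0)=1$ since $f_{5p}$ is monic reciprocal) is the correct one.
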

\begin{rem}
One may wonder if a similar statement as \cref{thm:5p_multiplicity_mod_p} holds for general $n=pq$. It turns out that the answer is no. We provide some concrete counterexamples. 
\begin{enumerate}
\item When $q=7, p=101$, the polynomial $x^2+42x+10$ is an irreducible factor of $F_{pq}$ (and $f_{pq}$) over $\F_p$, with multiplicity equal to $2$.
\item When $q=11, p=13$, the polynomial $x^2+9x+10$ is an irreducible factor of $F_{pq}$ (and $f_{pq}$) over $\F_p$, with multiplicity equal to $2$.
\item When $q=11, p=61$, the polynomial $x^2+16x+14$ is an irreducible factor of $F_{pq}$ (and $f_{pq}$) over $\F_p$, with multiplicity equal to $2$.
\end{enumerate}
It would be interesting to investigate this problem further. For instance, can we get upper bounds on the multiplicity of a repeated root $x_0 \in \overline{\F}_p$ of $F_n(x)$?
\end{rem}

\section{Irreducibility of $f_n$}
\label{sec:irreducibility}

Checking irreducibility of a polynomial $f(x) \in \Z[x]$ is computationally expensive when $\deg (f)$ is large. A natural way to show irreducibility is by finding a prime $p \nmid \disc(f)$ such that the reduction $\fbar(x)$ mod $p$ is irreducible over $\F_p$. This strategy fails if $\deg (f) = 2n$ is even and $\disc (f)$ is a perfect square: For every $p \nmid \disc (f)$, there is a Frobenius element $\Frob_p$ in the symmetric group $S_{2n}$ determining the Galois orbit decomposition of the roots of $\fbar$. If $\disc (f)$ is a perfect square, $\Frob_p$ is an even permutation and hence cannot be a $2n$-cycle.

In this section, we will discuss some methods to computationally verify irreducibility of the Fekete polynomial $f_n \in \Z[x]$ by exploiting the fact that it is reciprocal. These are slight improvements of the criteria described in \cite{irreducibility}, and they all depend on irreducibility of the trace polynomial $g_n$, for which we turn to the method mentioned above.

We first make the following definition and state a key lemma from \cite{irreducibility}.
\begin{definition}
For a polynomial $h$ of degree $n$, its reversal polynomial is defined by $h_{\rev}(x) = x^n h(1/x)$.
\end{definition}
\begin{lem} \label{lem:irreducible_test}
Let $f$ be a monic reciprocal polynomial of degree $2n$. Let $g$ be the trace polynomial of $f$. Suppose that $g$ is irreducible. If $f$ is reducible, then there exists a monic irreducible polynomial $h \in \Z[x]$ such that 
$f(x) = \pm h(x) h_{\rev}(x)$.
Furthermore, if $f(1)>0$ then $f(x) = h(x) h_{\rev}(x)$.
\end{lem}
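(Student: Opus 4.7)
The plan is to show that, when $f$ is reducible, any monic irreducible factor $h$ of $f$ in $\Z[x]$ has degree exactly $n$, and the complementary factor is the monic reversal $h^* := h(0)\cdot h_{\rev}$.

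Since $f$ is monic reciprocal of even degree $2n$, one has $f(0)=1$, and hence any monic factor $h \in \Z[x]$ of $f$ satisfies $h(0)=\pm 1$. This ensures $h^* := h(0)\cdot h_{\rev} \in \Z[x]$ is monic. A direct calculation using $(pq)_{\rev}=p_{\rev}\,q_{\rev}$ together with $h(0)^2=1$ shows that $h\cdot h^*$ is monic and reciprocal of even degree $2\deg h$. Now let $h$ be a monic irreducible factor of $f$. If $h\neq h^*$, then $h^*$ is a distinct monic irreducible factor of $f$ (reversal is an isomorphism of $\Q[x,x^{-1}]$ so preserves irreducibility, and $h^*\mid f$ follows from reciprocity of $f$). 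Hence $h$ and $h^*$ are coprime and $h\cdot h^*\mid f$. Since the trace polynomial of any monic reciprocal divisor of $f$ of even degree divides $g$, and $g$ is irreducible, the trace polynomial of $h\cdot h^*$ (of degree $\deg h$) must equal $g$, forcing $\deg h = n$ and $f = h\cdot h^*$. The case $h = h^*$ means $h_{\rev} = h(0)\cdot h$, i.e.\ $h$ is palindromic or anti-palindromic; if palindromic of even degree $m\geq 2$, its own trace polynomial of degree $m/2$ divides the irreducible $g$, forcing $m = 2n$ and $h = f$, contradicting reducibility. The remaining sub-cases force $h\in\{x\pm 1\}$ and, together with irreducibility of $g$, land in the edge case $n=1$ with $f = (x\pm 1)^2$, where the desired factorization $f = h(0)\cdot h\cdot h_{\rev}$ is immediate by direct inspection.

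In all cases, $f = h(0)\cdot h\cdot h_{\rev}$, so we may take $a = h(0) \in \{-1,1\}$. For the final assertion, using $h_{\rev}(1) = h(1)$ gives $f(1) = a\cdot h(1)^2$, so $f(1) > 0$ forces $a = 1$. The main technical point is the lemma that the trace polynomial of a monic reciprocal divisor $p$ of $f$ (of even degree) divides $g$. This is proved by writing $p(x) = x^{\deg p/2}\, P(x+1/x)$ and $f(x) = x^n\, g(x+1/x)$, and observing that the monic reciprocal quotient $f/p$ is of the form $x^{(2n-\deg p)/2}\, Q(x+1/x)$ for some $Q\in\Z[y]$, whence $g = P\cdot Q$.
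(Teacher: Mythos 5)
Your proof is correct and complete. Note that the paper does not actually prove this lemma: its ``proof'' is the single sentence that the statement follows from the proof of Theorem 11 of the cited reference on irreducibility of reciprocal polynomials. Your argument is therefore a self-contained reconstruction of what that external proof must contain, and both load-bearing points are handled properly: (i) the key lemma that for a monic reciprocal divisor $p$ of $f$ of even degree the cofactor $f/p$ is again monic reciprocal of even degree, so the trace polynomial of $p$ divides the irreducible $g$ and must equal it; and (ii) the dichotomy on a monic irreducible factor $h$ according to whether $h$ coincides with its monic reversal $h(0)\,h_{\rev}$, with the self-reversal case correctly whittled down to $h=x\pm 1$ (even-degree palindromic is killed by (i), odd-degree palindromic forces $h(-1)=0$, anti-palindromic forces $h(1)=0$), which in turn forces $g$ linear, $n=1$ and $f=(x\pm 1)^2$, where the claimed factorization holds by inspection. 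The sign statement via $f(1)=a\,h(1)^2$ and $h_{\rev}(1)=h(1)$ is also right. The only point worth making explicit is a single invocation of Gauss's lemma to ensure that a monic factor of the monic $f$ has a monic integer cofactor, which is what yields $h(0)=\pm 1$; you use this implicitly when asserting $h(0)\in\{-1,1\}$ and when multiplying the two trace polynomials inside $\Z[y]$.
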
    
\begin{proof}
See \cite[Corollary 6]{irreducibility} and the remark after it.
\end{proof}

Our first irreducibility criterion is \cref{prop:irreducibility1}. Together with \cref{prop:values_at_1and-1}, this shows that whenever the degree of the Fekete polynomial $f_{pq}$ is divisible by $4$, it is irreducible if and only if its trace polynomial $g_{pq}$ is irreducible.
\begin{prop}
\label{prop:irreducibility1}
Let $f$ be a monic reciprocal polynomial of degree $4n$. Let $g$ be the trace polynomial of $f$. Suppose that $g$ is irreducible and $f(1)f(-1)<0$. Then $f$ is irreducible. 
\end{prop}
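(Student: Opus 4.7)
The plan is to argue by contradiction, leveraging \cref{lem:irreducible_test} which already does almost all of the work. Suppose $f$ is reducible. Since its trace polynomial $g$ is irreducible by hypothesis, the lemma produces a factorization
\[
f(x) \;=\; a\, h(x)\, h_{\rev}(x)
\]
with $a \in \{-1, 1\}$ and $h \in \Z[x]$ monic. My strategy is to evaluate this identity at $x = 1$ and $x = -1$ and extract a sign constraint that contradicts $f(1)f(-1) < 0$.

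The key identities are $h_{\rev}(1) = h(1)$ and $h_{\rev}(-1) = (-1)^{\deg h} h(-1)$, so everything hinges on pinning down $\deg h$. Since $f$ is monic and reciprocal of positive degree, $f(0) = 1 \neq 0$, which forces $h(0) \neq 0$, so no degree is lost in reversal and $\deg h_{\rev} = \deg h$. Combined with $\deg h + \deg h_{\rev} = \deg f = 4n$, this yields $\deg h = 2n$, which is crucially even.

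Substituting back, I would obtain $f(1) = a\, h(1)^2$ and $f(-1) = a\, (-1)^{2n} h(-1)^2 = a\, h(-1)^2$, and then
\[
f(1)\,f(-1) \;=\; a^2 \, h(1)^2 \, h(-1)^2 \;\geq\; 0,
\]
contradicting the hypothesis $f(1)f(-1) < 0$. Hence $f$ must be irreducible. There is no real obstacle here: the entire argument is a short sign-tracking exercise, and the only point requiring genuine care is the degree parity calculation, since it is what forbids $f(1)$ and $f(-1)$ from having opposite signs when a factorization of the allowed form exists.
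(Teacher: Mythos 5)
Your proposal is correct and follows essentially the same route as the paper: invoke \cref{lem:irreducible_test} to write $f = a\,h\,h_{\rev}$, then evaluate at $\pm 1$ to get $f(1)f(-1) = a^2 h(1)^2 h(-1)^2 \geq 0$, contradicting the hypothesis. Your explicit justification that $\deg h = 2n$ is even (via $f(0)=1$, so reversal loses no degree) is a small point the paper leaves implicit, but the argument is the same.
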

\begin{proof}
Suppose $f$ is reducible. Then by \cref{lem:irreducible_test}, $f(x)= \pm x^{2n} h(x)h(\frac{1}{x})$ for some $h \in \Z[x]$. Consequently 
$f(1)f(-1) = h(1)^2 h(-1)^2 \geq 0$ which contradicts the hypothesis.
\end{proof}

Our second irreducibility criterion is \cref{prop:irreducibility2}. Together with \cref{thm:coeff_3p}, this proves that for all primes $p > 3$, the Fekete polynomial $f_{3p}$ is irreducible if and only if its trace polynomial $g_{3p}$ is irreducible.
\begin{prop}
\label{prop:irreducibility2}
Let $f = \sum_{k=0}^{2n} a_{k}x^k $ be a monic reciprocal polynomial of degree $2n$ such that $f(1)f(-1) \neq 0$. Let $g$ be the trace polynomial of $f$. Suppose that $g$ is irreducible and the middle coefficient of $f$ satisfies $|a_n| \leq 2.$ Then $f$ is irreducible. 
\end{prop}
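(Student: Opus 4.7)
The plan is to argue by contradiction, mirroring the strategy used in the earlier irreducibility criteria. Suppose that $f$ is reducible. Then \cref{lem:irreducible_test} yields a factorization $f(x) = a\, h(x)\, h_{\rev}(x)$ with $a \in \{-1, 1\}$ and $h(x) = \sum_{k=0}^n b_k x^k \in \Z[x]$ monic of degree $n$. The first step is to compute the middle coefficient of $f$ in terms of the coefficients of $h$: a direct convolution (using $h_{\rev}(x) = \sum_{k=0}^n b_{n-k} x^k$) gives
\[
a_n = a \sum_{k=0}^n b_k^2,
\]
so in particular $|a_n| = \sum_{k=0}^n b_k^2 \geq b_n^2 = 1$.

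Combining this with the hypothesis $|a_n| \leq 2$ forces $\sum_{k=0}^{n-1} b_k^2 \leq 1$, so $h$ has at most two nonzero coefficients and takes one of the forms $h(x) = x^n$ or $h(x) = x^n + \epsilon x^j$ with $\epsilon \in \{-1,1\}$ and $0 \leq j \leq n-1$. The monomial case would force $f = a x^n$ of degree $n \neq 2n$, and is immediately ruled out. In the remaining case, expanding
\[
(x^n + \epsilon x^j)(1 + \epsilon x^{n-j}) = \epsilon x^{2n-j} + 2x^n + \epsilon x^j
\]
and requiring $f$ to be monic of degree $2n$ pins down $j = 0$ and $a = \epsilon$, leading to $f(x) = (x^n + \epsilon)^2$.

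The final step is to show that $f(x) = (x^n + \epsilon)^2$ is incompatible with the hypotheses. If $\epsilon = -1$ then $f(1) = 0$, contradicting $f(1)f(-1) \neq 0$. If $\epsilon = 1$ then $f(-1) = ((-1)^n + 1)^2$ is nonzero only when $n$ is even, so we may write $n = 2m$ with $m \geq 1$; the trace polynomial then satisfies
\[
g(x + x^{-1}) = x^{-n} f(x) = (x^m + x^{-m})^2 = P_m(x + x^{-1})^2,
\]
where $P_m \in \Z[y]$ denotes the monic degree-$m$ polynomial characterized by $P_m(x + x^{-1}) = x^m + x^{-m}$. Hence $g = P_m^2$ is reducible, contradicting the assumption that $g$ is irreducible. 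I do not foresee any substantive obstacle; the only point requiring genuine care is the degree bookkeeping in the expansion of $h \cdot h_{\rev}$, which is what forces the very rigid shape $f = (x^n + \epsilon)^2$ and ultimately enables the clean contradiction via $g$.
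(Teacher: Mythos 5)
Your proof is correct and follows essentially the same route as the paper's: invoke \cref{lem:irreducible_test} to write $f = a\,h\,h_{\rev}$, read off $a_n = a\sum_k b_k^2$ from the middle coefficient to force $h$ to be a binomial, and then derive a contradiction from $f(1)f(-1)\neq 0$ when $n$ is odd or from $g$ being a square (hence reducible) when $n$ is even. The only cosmetic difference is that the paper also compares leading coefficients at the outset to get $b_0=\pm 1$, which makes $\sum_k b_k^2\ge 2$ and collapses your case analysis immediately to $h=x^n\pm 1$, whereas you eliminate the extra shapes of $h$ afterwards via the degree bookkeeping.
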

\begin{proof}
Suppose that $f$ is reducible. Without loss of generality, we can assume $f(1)>0$ and hence by \cref{lem:irreducible_test}, $f(x) = h(x) h_{\rev}(x)$ for some monic irreducible polynomial $h \in \Z[x]$. Let $h(x) = \sum_{k=0}^n c_kx^k$ with $c_n=1$. Since $f$ is monic, by comparing leading coefficients, we deduce that $c_0=1$ as well. Further, we get the following expression for the middle coefficient of $f$:
\[ a_n = \sum_{k=0}^n c_k ^2.\]
Since $|a_n| \leq 2$ and $c_n=c_0=1$, we conclude that $c_k=0$ for $1 \leq k \leq n-1$ and thus $h(x)=x^n+1.$ Since $f(-1) \neq 0$, we deduce that $n$ is even. So $f = h h_{\rev}$ is a factorisation into reciprocal polynomials of even degree, which yields a non-trivial factorisation of $g$ (see \cite[Proposition 8]{irreducibility}). This contradicts the hypothesis that $g$ is irreducible.
\end{proof}
\begin{rem}
For $p \leq 1000$, we have also checked that the middle coefficient of $f_{5p}$ lies in the set $\{-2, -1, 0, 1, 2 \}$. However the middle coefficient of $f_{7 \times 73}$ is $-3$. \end{rem}

Our last irreducibility criterion is \cref{prop:counting_criterior}. This forms the basis of \cref{algo:irreducible}.
\begin{prop} \label{prop:counting_criterior}
 Let $f$ be a monic reciprocal polynomial of degree $2n$. Let $g$ be the trace polynomial of $f$. Suppose that $g$ is irreducible. Suppose also that there exists a prime number $q$ and a positive integer $m$ such that the number of irreducible factors of degree $m$ of the reduction $\fbar \in \F_{q}[x]$, counted with multiplicity, is  an odd number. Then $f$ is irreducible.
\end{prop}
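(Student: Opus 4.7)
The plan is to argue by contradiction, combining \cref{lem:irreducible_test} with a simple parity count on factorizations modulo $q_1$. Suppose, for contradiction, that $f$ is reducible. Since $g$ is irreducible, \cref{lem:irreducible_test} supplies a sign $a \in \{-1,1\}$ and a monic $h(x) \in \Z[x]$ of degree $n$ with
\[
f(x) = a\, h(x)\, h_{\rev}(x).
\]
Reducing modulo $q_1$, the same identity holds in $\F_{q_1}[x]$ with $a \in \F_{q_1}^{\times}$. Because $f$ is monic reciprocal of degree $2n$, its constant term equals $1$, so modulo $q_1$ the indeterminate $x$ divides none of $f$, $h$, $h_{\rev}$. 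In particular, every monic irreducible factor $p$ that appears in the factorization of $h$ or $h_{\rev}$ over $\F_{q_1}$ satisfies $p(0) \neq 0$, and its normalized reversal $\widetilde{p_{\rev}} := p_{\rev}/p(0)$ is again a monic irreducible of the same degree as $p$.

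The key step is the observation that $p \mapsto \widetilde{p_{\rev}}$ is a degree-preserving involution on the set of monic irreducibles in $\F_{q_1}[x] \setminus \{x\}$, and that $v_p(h_{\rev}) = v_{\widetilde{p_{\rev}}}(h)$ for every such $p$, since reversal commutes with products up to unit factors. Writing
\[
N_m(F) \;=\; \sum_{\substack{p \text{ monic irreducible in }\F_{q_1}[x]\\ \deg p = m}} v_p(F)
\]
for the number of degree-$m$ irreducible factors of $F$ counted with multiplicity, the valuation identity summed over all monic irreducibles of degree $m$ gives $N_m(h_{\rev}) = N_m(h)$, and therefore
\[
N_m(f) \;=\; N_m(h) + N_m(h_{\rev}) \;=\; 2\,N_m(h),
\]
which is even. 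This contradicts the hypothesis that $N_m(f)$ is odd, and we conclude that $f$ must be irreducible.

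The only real subtlety is the bookkeeping for the reversal involution and the corresponding valuation identity; once these are verified, the argument is a one-line parity check. It is worth pointing out that the count of ``irreducible factors of degree $m$'' must be interpreted \emph{with multiplicity} for this argument to go through: a self-reciprocal irreducible divisor of $h$ is fixed by the involution, and under a count of distinct factors it would contribute just once, potentially producing an odd parity that does not, by itself, contradict anything. With the with-multiplicity convention in place, no such pathology arises and the proof is complete.
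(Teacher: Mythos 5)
Your proof is correct and follows essentially the same route as the paper's: invoke \cref{lem:irreducible_test} to write $f = \pm\, h(x)h_{\rev}(x)$, then observe that reduction modulo $q_1$ pairs the irreducible factors of $h$ with those of $h_{\rev}$ via reversal, forcing the degree-$m$ factors of $f$ to occur an even number of times. Your additional care --- checking that reversal is a degree-preserving involution on monic irreducibles away from $x$, and flagging that the count must be taken \emph{with multiplicity} (a self-reciprocal irreducible factor of $h$ is fixed by the involution, so a count of distinct factors could be odd without contradiction) --- is a worthwhile sharpening of the paper's one-line argument, which leaves this convention implicit.
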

\begin{proof}
    By \cref{lem:irreducible_test}, if $f$ is reducible then $f(x)= \pm h(x) h_{\rev}(x)$. So any irreducible factor $a(x)$ of $\fbar(x)$ divides either $\hb(x)$ or $\hb_{\rev}(x)$, and thus $a_{\rev}(x)$ is another irreducible factor of $\fbar(x)$. Therefore, the degree of any irreducible factor of $\fbar(x)$ must arise an even number of times. This contradicts the given hypothesis. So $f$ must be irreducible.
\end{proof}
\begin{algo}[Verifying irreducibility of a reciprocal polynomial $f(x)$ of even degree] \label{algo:irreducible}
\hfill
\begin{enumerate}[label={\bf Step \arabic*.}]
\item Compute the trace polynomial $g(x)$.
\item Find a prime number $q_1$ such that the reduction $\gbar(x) \in \F_{q_1}[x]$ is irreducible. 
\item Find a prime number $q_2$ such that the factorisation of $\fbar(x) \in \F_{q_2}[x]$ has an odd number of irreducible factors of some degree $m$ (when counted with multiplicity) i.e. satisfying the hypothesis of \cref{prop:counting_criterior}.
\item If Steps $2$ and $3$ terminate, return \texttt{True}.
\end{enumerate}
\end{algo}
\begin{expl}
We give a concrete example to demonstrate this method. Let us consider the Fekete polynomial $f_{15}(x)=x^6 - x^4 + x^3 - x^2 + 1$. Its trace polynomial is $g_{15}(x) = x^3 - 4x + 1$.
We can check that the reduction $\gbar_{15}(x) \in \F_3[x]$ is irreducible, so $g_{15}(x)$ is irreducible over $\Z$. Furthermore, the reduction $\fbar_{15}(x) \in \F_2[x]$ factors as: 
\[ \fbar_{15}(x) = (x^2 + x + 1)  (x^4 + x^3 + x^2 + x + 1). \]
Since the degree $2$ (and $4$) irreducible factor appears only once in the factorisation, by \cref{prop:counting_criterior}, the Fekete polynomial $f_{15}(x)$ is irreducible. 
\end{expl}

Using \cref{algo:irreducible}, we have verified irreducibility of the Fekete polynomial $f_n(x)$ when $n < 10^4$ is a product of two distinct odd primes.
For each $n$, our GitHub repository \cite{fekete} lists the corresponding prime pairs $(q_1, q_2)$. We are tempted to make the conjecture:
\begin{conj} \label{conj:irr}
    Let $n=pq$ be a product of two distinct odd primes. Then the Fekete polynomial $f_n$ from \cref{defn:fn} is irreducible. 
\end{conj}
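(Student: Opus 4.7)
The plan is to first reduce the irreducibility of $f_n$ to that of the trace polynomial $g_n$, and then tackle $g_n$ directly. Since $f_n$ is reciprocal of even degree, if $g_n$ is irreducible with a root $\alpha$, then $f_n$ is irreducible over $\Z$ if and only if the quadratic $x^2 - \alpha x + 1$ is irreducible over $\Q(\alpha)$. The sufficient criteria of \cref{sec:irreducibility} (in particular \cref{prop:counting_criterior} together with its analogues via $f_n(\pm 1)$ and the middle coefficient) are practical ways to enforce this condition, and \cref{prop:values_at_1and-1,prop:discriminant_f} supply the necessary input. A useful first step would be to prove that, uniformly in $(p,q)$, the coefficients of $f_n$ remain in a bounded interval---extending the bound $\{-2,-1,0,1,2\}$ of \cref{thm:coeff_3p} to all $n = pq$. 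Combined with the explicit values of $f_n(\pm 1)$, this would let us apply \cref{lem:irreducible_test}: any nontrivial factorisation $f_n = \pm h(x) h_{\rev}(x)$ would force $h$ to have very constrained coefficients, leading to a contradiction in all but finitely many residue classes of $(p,q)$, to be disposed of by hand.

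The substantive task is then the irreducibility of $g_n$. The natural approach is modular: exhibit a prime $\ell \neq p, q$ such that $g_n \bmod \ell$ is irreducible, or more generally such that the degrees of its irreducible factors have least common multiple equal to $\deg g_n$. Assuming the Galois group of $g_n$ is as large as the sign of the discriminant allows---which is the expectation arising from \cref{prop:discriminant_f} and the computational evidence---Chebotarev's theorem guarantees that a positive density of primes have Frobenius acting as a long cycle, so such $\ell$ exists in abundance, and this is exactly how one verifies the conjecture for a fixed $(p,q)$ in practice. A theoretical proof, however, must construct such $\ell$ explicitly; the natural candidates are $\ell \in \{p, q\}$, where \cref{prop:roots_over_fp} and its analogue modulo $q$ describe $F_n \bmod \ell$ in closed form. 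One would hope to refine the analyses of \cref{sec:3p} and \cref{sec:5p}, which already produced separability of $f_n$ modulo $p$ in the cases $n = 3p$ and $n = 5p$, and push them to identify an irreducible factor of $f_n \bmod p$ of large degree.

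The main obstacle is precisely that these reductions are highly non-separable: from \cref{prop:roots_over_fp}, $F_n \equiv (x^q-1)^{p-1} F_q(x) - F_q(x)^p \pmod{p}$, which is essentially a perfect $p$-th power plus a low-degree correction, so the irreducible factorisation of $f_n \bmod p$ cannot be read off directly. One must either pass to Newton polygons at $p$, Hensel-lift from the completion of $\Z[\zeta_q]$ at a prime above $p$, or combine information from several primes simultaneously. A promising variant is to assume a hypothetical factorisation $f_n = h k$ over $\Z$ and track it modulo $p$ and modulo a second carefully chosen prime $\ell$: the restrictive mod-$p$ shape coming from \cref{prop:roots_over_fp}, together with complementary information at $\ell$, should over-determine $h$. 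Producing such an auxiliary $\ell$ uniformly in $(p,q)$, and controlling the interaction between the $p$-adic and $\ell$-adic factorisation patterns as $(p,q)$ vary, is the step where current techniques appear to break down, and it is the heart of the difficulty behind \cref{conj:irr}.
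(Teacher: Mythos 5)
The statement you are addressing is a \emph{conjecture} in the paper (\cref{conj:irr}); the authors give no proof of it. What they provide is a verification procedure for individual $n$ (\cref{algo:irreducible}): find a prime $q_2$ with $g_n$ irreducible modulo $q_2$, then a prime $q_1$ meeting the hypothesis of \cref{prop:counting_criterior}, and they report running this for $n\le 7200$ (with $13\le q<p$). Your proposal is likewise not a proof --- you say so yourself at the end --- so there is no completed argument to compare. That said, the first half of your plan (reduce irreducibility of $f_n$ to that of $g_n$ using the reciprocal structure and the criteria of \cref{sec:irreducibility}, then certify $g_n$ modulo an auxiliary prime) is exactly the paper's computational strategy, and your observation that Chebotarev only guarantees the auxiliary prime \emph{conditionally} on already knowing the Galois group correctly identifies why this cannot be turned into a theoretical proof as it stands.

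Two concrete cautions about the intermediate steps you propose. First, the uniform coefficient bound you want as a "first step" is itself open: the paper already notes that the bound $\{-2,-1,0,1,2\}$ of \cref{thm:coeff_3p} fails for $q=7$ (the middle coefficient of $f_{7\cdot 601}$ is $3$), so any such bound would have to grow with $q$, and more importantly a bound on the coefficients of $f_n$ does not by itself constrain the coefficients of a hypothetical factor $h$ in $f_n=\pm h\,h_{\rev}$ --- factors of polynomials with small coefficients can have large coefficients --- so the claimed contradiction "in all but finitely many residue classes" does not follow from \cref{lem:irreducible_test} without substantial additional input. Second, the reduction $F_n\equiv (x^q-1)^{p-1}F_q(x)-F_q(x)^p\pmod p$ is in general \emph{not} even squarefree away from the cyclotomic part (the paper's remark at the end of \cref{sec:5p} lists repeated quadratic factors of $f_{pq}$ modulo $p$ for $(q,p)=(7,101),(11,13),(11,61)$), so extracting a large-degree irreducible factor modulo $p$ is obstructed already at the level of separability, not just of reading off the factorisation. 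Your assessment that the heart of the difficulty lies in producing the auxiliary prime uniformly in $(p,q)$ is accurate, but the proposal does not close this gap, and neither does the paper.
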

The polynomials $1 - x + x^k - x^{2k-1} + x^{2k}$ associated to certain symmetric numerical semigroups were studied in \cite{Moree21}. In particular, besides determining the cyclotomic factors, \cite[Conjecture 6.6]{Moree21} predicts that these polynomials have a unique irreducible non-cyclotomic factor. We observe that the Fekete polynomials $F_{pq}$ considered in this paper do not come from numerical semigroups, but their factorisation exhibits an analogous picture, as described in \cref{thm:cyclotomic_factors_pq1}, \cref{defn:fn} and \cref{conj:irr}.

\section{Galois groups of $f_n$ and $g_n$}
\label{sec:Galois}

Let $f$ be a polynomial over $\Q$ of degree $m$. The Galois group of $f$ is the Galois group of its splitting field, denoted $\Q(f)$, as an extension of $\Q$. Using the permutation action on the roots of $f$, it is naturally a subgroup of the symmetric group $S_m$. If $f$ is irreducible, then it is a transitive subgroup of $S_m$. We recall the well-known result that a transitive subgroup of $S_m$ containing a $2$-cycle (transposition) and an $(m-1)$-cycle is all of $S_m$. Using this result, we have a criterion to verify that the Galois group of a polynomial of degree $m$ is the full symmetric group $S_m$. See \cite[Propisition 4.10]{MTT3}.

\subsection{The Galois group of $g_n$}
It is known that the Galois group of a polynomial of degree $m$ is generically the full symmetric group $S_m$. This also seems to be the case for the special family $g_n$ from \cref{def:g}. We use the technique mentioned above to verify this for a range of values of $n$.
We first demonstrate this technique using an example. 
\begin{expl} \label{expl:galois_for_g}
Let $n=3 \times 7 = 21$. Then 
\[ g_n(x) = x^8 + x^7 - 8x^6 - 7x^5 + 20x^4 + 14x^3 - 16x^2 - 6x + 2, \] 
is a polynomial of degree $m = 8$. Let $G = \Gal(\Q(g_n)|\Q)$.
Using SageMath, we see that $g_n$ is irreducible over $\F_{5}$. This shows that $G$ is a transitive subgroup of $S_m$.
Over $\F_{19}$, $g_n$ factors as the product of a linear factor and an irreducible factor of degree $m-1$:
\[ g_{n}(x) = (x + 8)  (x^7 + 12x^6 + 10x^5 + 8x^4 + 13x^3 + 5x^2 + x + 5) \pmod{19}.\]
This shows that $G$ contains an $(m-1)$-cycle.
Finally, over $\F_{7}$, we have the factorisation
\[ g_n(x) = (x^2 + x + 4) (x^3 + 4) (x^3 + 2x + 1) \pmod{7}, \]
which contains exactly one irreducible factor of degree $2$, and no other irreducible factor of even degree. This shows that $G$ contains a transposition.
Therefore $G = S_8$.
\end{expl}

\begin{question} \label{conj:galois_of_g}
Let $n=pq$ be a product of two distinct odd primes, $g_n$ be the polynomial from \cref{def:g}, and $m=\deg(g_n)$. Is the Galois group of $g_n$ isomorphic to $S_m$?
\end{question}
While this question seems hard to tackle, we find extensive computational evidence.
\begin{prop}
\label{prop:gal_gn_10000}
    The answer to \cref{conj:galois_of_g} is affirmative for all $n < 10^4$ such that $n$ is a product of two distinct odd primes $p, q$.
\end{prop}
\begin{proof}
    For each $n$, our GitHub repository \cite{fekete} lists prime triples $(q_1, q_2, q_3)$, such that the reduction of $g_n$ modulo $q_i$ has the desired factorization type as in \cref{expl:galois_for_g}.
\end{proof}

\subsection{The Galois group of $f_n$} 

Recall that the Fekete polynomials $f_n$ from \cref{defn:fn} are reciprocal polynomials whose trace polynomial is $g_n$ (\cref{def:g}). Let $m$ denote the degree of $g_n$. Then we have the following commutative diagram where the vertical maps are natural inclusions coming from the permutation action on roots.
\begin{equation}
    \begin{tikzcd}[column sep=small]
        1 \arrow[r] & \Gal(\Q(f_n)|\Q(g_n)) \arrow[r] \arrow[d, sloped, "\subseteq"] & \Gal(\Q(f_n)/\Q) \arrow[r] \arrow[d, sloped, "\subseteq"] & \Gal(\Q(g_n)/\Q) \arrow[r] \arrow[d, sloped, "\subseteq"] & 1\\
        1 \arrow[r] & (\Z/2)^m \arrow[r] & (\Z/2) \wr S_m \arrow[r] & S_m \arrow[r] & 1
    \end{tikzcd}
\end{equation}
Thus the Galois group of $f_n$ is naturally a subgroup of the wreath product $(\Z/2) \wr S_m$. Note that this wreath product is the semidirect product $(\Z/2)^m \rtimes S_m$, where $S_m$ acts by permuting the coordinates. It can be seen as a subgroup of $S_{2m}$ by \cite[Section 2]{davis1998probabilistic}.
We utilize \cite[Propositions 11.8, 11.11]{MTT4} to determine the Galois group of $f_n$ for a range of values of $n$. We first demonstrate their use with a couple of examples.

\begin{expl} \label{expl:galois_for_f}
Let $n=3 \times 7 = 21$. In \cref{expl:galois_for_g}, we showed that the Galois group of $g_n$ is $S_8$. 
By \cref{prop:discriminant_f}, the discriminant of $f_n$ is not a perfect square.  Further
\begin{align*} 
f_n(x) &= (x^2 + 12x + 1) \times (x^7 + 78x^6 + 173x^5 + 18x^4 + 119x^3 + 129x^2 + 107x + 9)  \\ 
& \times (x^7 + 138x^6 + 90x^5 + 215x^4 + 2x^3 + 221x^2 + 160x + 101) \pmod{227}.
\end{align*}
This factorisation over $\F_{227}$ contains exactly one irreducible factor of degree $2$, and no other irreducible factor of even degree.
By \cite[Proposition 11.11]{MTT4}, we conclude that the Galois group of $f_n$ is $(\Z/2)^8 \rtimes S_8$.
\end{expl}

\begin{expl}
Let $n = 5 \times 7 = 35$. Then $g_n$ is a polynomial of degree $11$ 
\[ g_n(x)= x^{11} - 11x^9 + 43x^7 + x^6 - 71x^5 - 5x^4 + 46x^3 + 4x^2 - 8x + 2.\]
As in \cref{expl:galois_for_g}, the factorisations of $g_n$ over $\F_{q_i}$ with $(q_1, q_2, q_3)=(29, 47, 31)$ yields that the Galois group of $g_n$ is $S_{11}$. Over $\F_{433}$, the polynomial $f_n$ factors as
\begin{align*}
f_n(x) = &(x + 97) \times (x + 125) \times (x^2 + 41x + 1) \times (x^4 + 124x^3 + 295x^2 + 124x + 1) \\ 
& \times (x^7 + 190x^6 + 62x^5 + 191x^4 + 406x^3 + 37x^2 + 393x + 313)  \\
& \times (x^7 + 289x^6 + 393x^5 + 76x^4 + 168x^3 + 50x^2 + 251x + 350) \pmod{433},
\end{align*}
with a unique irreducible factor each of degrees $2$ and $4$, and no other irreducible factor of even degree.
Remembering that now $\disc (f_n)$ is a perfect square by \cref{prop:discriminant_f}, and using \cite[Proposition 11.8]{MTT4}, we conclude that the Galois group of $f_n$ is $\ker(\Sigma) \rtimes S_{11}$ where $\Sigma: (\Z/2)^{11} \to \Z/2$ is the summation map.
\end{expl}

Thus we raise the following question, supported by extensive numerical evidence.
\begin{question} \label{conj:galois_f}
Let $n=pq$ be a product of two distinct odd primes, $f_n$ be the Fekete polynomial from \cref{defn:fn}, and $2m = \deg(f_n)$. Are the following statements true?
\begin{enumerate}
    \item Suppose $p \equiv 1 \pmod{q}$, or $p \not \equiv 1 \pmod{q}$ and $p,q \equiv 1 \pmod{4}$. Then the Galois group of $f_n$ is $(\Z/2)^m \rtimes S_m$.
    \item Suppose we are in the remaining case, i.e., $p \not \equiv 1 \pmod{q}$ and at least one of $p$ or $q$ is not congruent to $1 \pmod{4}$. Then the Galois group of $f_n$ is $\ker(\Sigma) \rtimes S_{m}$ where $\Sigma : (\Z/2)^{m} \to \Z/2$ is the summation map.
\end{enumerate}
\end{question}
\begin{prop}
\label{prop:gal_fn_10000}
    The answer to \cref{conj:galois_f} is affirmative for all $n < 10^4$ such that $n$ is a product of two distinct odd primes $p, q$.
\end{prop}
\begin{proof}
    By \cref{prop:gal_gn_10000}, the trace polynomial $g_n$ has Galois group isomorphic to $S_n$. For each $n$, our GitHub repository \cite{fekete} lists another prime $q_4$, such that the factorisation of $f_n$ modulo $q_4$ yields the desired conclusion, using \cite[Propositions 11.8, 11.11]{MTT4}.
\end{proof}

\section*{Code availability}

An open-source code repository for this work is available on GitHub \cite{fekete}.

\section*{Acknowledgments}

We thank Oleksiy Klurman, Andrew Granville, Bjorn Poonen and Kannan Soundararajan for helpful discussions and correspondence. The third named author would like to thank William Stein for his help with the platform Cocalc where our computations are based. 

\appendix

\section{Zeros on the unit circle}
\label{sec:zeros_on_unitcircle}

The complex zeros of classical Fekete polynomials $f_p(x) = \sum\limits_{a = 0}^{p-1} \chi(a) x^a$ for quadratic Dirichlet characters $\chi = \legendre{\cdot}{p}$ of prime conductor $p$ were studied in \cite{conrey2000zeros}. It was shown in \cite{conrey2000zeros} that at least half of the zeros of $f_p$ lie on the unit circle, and further that there exists a constant $1/2 < k_0 < 1$ such that the fraction of zeros of $f_p$ lying on the unit circle converges to $k_0$ as $p$ goes to infinity. In this section, we use the approach of \cite{conrey2000zeros} to analyze complex zeros of the Fekete polynomials $F_n$ corresponding to principal Dirichlet characters. We remark that since the coefficients of $F_n$ are either $0$ or $1$, the Erdos-Turan theorem implies that the roots of this polynomial are almost all clustered around the unit circle and equidistributed in angle (see \cite[Theorem 1]{erdos1950distribution} and \cite[Theorem 1.3]{granville2007distribution}). We thank Professor Kannan Soundararajan for pointing this out to us.

Let $H_n : \C \setminus (0,\infty) \rightarrow \C$ be the function defined by $H_n(z) = z^{-n/2}F_n(z)$ where we make a choice of the square root $z^{1/2}$. If $z = e^{2 \pi i t}$ we have
\begin{align*}
    H_n(z) & = z^{-n/2} \sum_{\substack{1 \leq a \leq n-1\\ \gcd(a,n)=1}} z^a = \sum_{\substack{1 \leq a \leq (n-1)/2\\ \gcd(a,n)=1}} (x^{a-n/2} + x^{n/2-a})\\
    & = \sum_{\substack{1 \leq a \leq (n-1)/2\\ \gcd(a,n)=1}} 2 \cos\left(\pi t (2a-n)\right).
\end{align*}
Let $\C_1 = \{z \in \C : \abs{z} = 1\}$ denote the unit circle in $\C$. Thus $H_n$ defines a continuous real valued function on $\C_1 \setminus \{1\}$. For $k \in \Z$, let $d_k$ denote $n/\gcd(n,k)$. By \cref{prop:value_at_zeta_d}, if $0 < k < n$ and $\zeta_n = e^{2\pi i/n}$, we have
\begin{align}
\label{eqn:values_of_Hn}
    H_n(\zeta_n^k) = \zeta_n^{-nk/2} F_n(\zeta_n^k) = \frac{(-1)^k \mu(d_k) \phi(n)}{\phi(d_k)}
\end{align}
If $k$ is such that $\gcd(n,k) = \gcd(n,k+1) = 1$, then $H_n$ changes sign on the arc from $\zeta_n^k$ to $\zeta_n^{k+1}$. Therefore, $H_n$ and hence $F_n$ must have a zero on this arc.

Let $\phi_1(n)$ denote the cardinality of the set $\{0 \leq a < n | \gcd(n,a) = \gcd(n,a+1) = 1\}$. Then $\phi_1 : \N \rightarrow \N$ is a multiplicative function by Chinese Remainder theorem, and $\phi_1(p^k) = p^k(1-2p^{-1})$. Thus we have the formula
$\phi_1(n) = n\prod_{p | n}\left(1-\frac{2}{p}\right)$. In summary, we have just proved the following. 
\begin{prop}
    $F_n$ has at least $\phi_1(n)$ roots on the unit circle where 
    \[ \phi_1(n) = n\prod_{p | n}\left(1-\frac{2}{p}\right) .\] 
\end{prop}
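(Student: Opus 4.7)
The plan is to combine the continuity of $H_n(z) = z^{-n/2} F_n(z)$ on $\C_1 \setminus \{1\}$ (already established in the excerpt) with the evaluation formula \cref{eqn:values_of_Hn}, and apply the intermediate value theorem between consecutive $n$-th roots of unity whose arguments are both coprime to $n$.

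Concretely, whenever $\gcd(n, k) = \gcd(n, k+1) = 1$ for some $0 \leq k < n$, we have $d_k = d_{k+1} = n$ by definition, so \cref{eqn:values_of_Hn} reduces to $H_n(\zeta_n^k) = (-1)^k \mu(n)$ and $H_n(\zeta_n^{k+1}) = -(-1)^k \mu(n)$. For squarefree $n$ we have $\mu(n) = \pm 1$, so these values are nonzero and of opposite signs; the IVT applied to the continuous real-valued function $H_n$ on the open arc between them (which avoids $1$ since $k \neq 0$ and $k+1 \neq n$) produces a zero of $F_n$ on that arc. Since the arcs for distinct valid $k$ are pairwise disjoint, the zeros produced are pairwise distinct.

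To extract the explicit formula, I would note that $\phi_1(n) := \#\{0 \leq k < n : \gcd(n,k) = \gcd(n,k+1) = 1\}$ is multiplicative by the Chinese Remainder Theorem, with $\phi_1(p^m) = p^m(1 - 2/p)$ obtained by excluding the two residue classes $k \equiv 0, -1 \pmod p$ among $p^m$ residues modulo $p^m$. Multiplicativity then yields $\phi_1(n) = n \prod_{p \mid n}(1 - 2/p)$.

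The main point requiring care is the non-squarefree case, since $\mu(n) = 0$ makes the sign-change argument above degenerate. Here I would argue directly: by \cref{prop:value_at_zeta_d}, $F_n(\zeta_n^k) = \mu(n) = 0$ whenever $\gcd(k, n) = 1$, so the $\varphi(n)$ primitive $n$-th roots of unity are already unit-circle zeros of $F_n$, and the trivial inequality $\varphi(n) \geq \phi_1(n)$ (from $1 - 1/p \geq 1 - 2/p$ for every prime $p$) closes this case.
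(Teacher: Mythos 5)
Your proof is correct and follows essentially the same route as the paper: the sign change of $H_n$ between $\zeta_n^k$ and $\zeta_n^{k+1}$ when $\gcd(n,k)=\gcd(n,k+1)=1$, the intermediate value theorem on each such arc, and the multiplicativity of $\phi_1$ via CRT. Your separate treatment of the non-squarefree case (where $\mu(n)=0$ degenerates the sign argument) is a sensible addition, though the paper sidesteps it by having fixed $n$ squarefree from \cref{subsec:reduction_to_squarefree} onward.
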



If $n=p$ or $n=2p$ where $p$ is a prime number then all factors of $F_n$, except $x$, are cyclotomic polynomials as explained in Examples \ref{expl:p_and_pk} and \ref{expl:2p}. The case where $n$ has exactly two odd prime factors is more interesting. Specifically, let us consider the following special case: we fix an odd prime $q$ and consider $n = pq$ for varying primes $p$. Then $\phi_1(n) \sim \left(1-\frac{2}{q}\right)n$ as $p \rightarrow \infty$. Therefore, the number of complex zeros of $F_n$ on the unit circle grows at least as fast as $k_0n$ in this limit, with $k_0 = 1-\frac{2}{q}$. It would be interesting to study this question for general $n.$

\bibliographystyle{abbrv}
\bibliography{references.bib}

\end{document}